\let\emph\relax
\DeclareTextFontCommand{\emph}{\bfseries}
\theoremstyle{definition}
\newtheorem{Definition}{Definition}[section]
\newtheorem{Remark}[Definition]{Remark}
\theoremstyle{theorem}
\newtheorem{Theorem}[Definition]{Theorem}
\newtheorem{Lemma}[Definition]{Lemma}
\newtheorem{Proposition}[Definition]{Proposition}
\numberwithin{equation}{section}
\DeclareMathOperator{\dive}{div}
\DeclareMathOperator{\curle}{curl}
\newcommand{\NN}{\mathbb{N}}
\newcommand{\RR}{\mathbb{R}}
\newcommand{\cB}{\mathcal{B}}
\newcommand{\cC}{\mathcal{C}}
\newcommand{\cF}{\mathcal{F}}
\newcommand{\cG}{\mathcal{G}}
\newcommand{\cI}{\mathcal{I}}
\newcommand{\cP}{\mathcal{P}}
\newcommand{\cQ}{\mathcal{Q}}
\theoremstyle{definition}
\theoremstyle{definition}
\newcommand{\wsconverge}{\xrightharpoonup{w^*}}
\newcommand{\wconverge}{\xrightharpoonup{\,\,w\,}}
\newcommand{\sconverge}{\xrightarrow{\,\,s\,\,}}
\newcommand\abs[1]{\left|#1\right|}
\newcommand\norm[1]{\left\|#1\right\|}
\newcommand\normf[1]{\|#1\|}
\title{Global Existence of Strong Solutions and Serrin-Type Blowup Criterion for 3D Combustion Model in Bounded Domains}
\author{Jiawen Zhang\thanks{zhangjiawen@amss.ac.cn}}
\affil{\normalsize School of Mathematical Sciences,\\
University of Chinese Academy of Sciences, Bejing 100049, P.R. China}
\date{}
\begin{document}

\maketitle

\begin{abstract}
The combustion model is studied in three-dimensional (3D) smooth bounded domains with various types of boundary conditions. The global existence and uniqueness of strong solutions are obtained under the smallness of the gradient of initial velocity in some precise sense. Using the energy method with the estimates of boundary integrals, we obtain the a priori bounds of the density and velocity field. Finally, we establish the blowup criterion for the 3D combustion system.\\
\par\textbf{Keywords: }3D combustion model; Dirichlet boundary conditions; slip boundary conditions; global strong solutions; Serrin's condition.
\end{abstract}

\section{Introduction}
In this paper, we assume that $\Omega$ is a simply connected bounded domain in $\RR^3$ with smooth boundary and investigate the following system in $\Omega$,
\begin{equation}\label{equation1.1}
\begin{cases}
\rho_t+\dive (\rho u)=0,\,\,\rho\geq 0,\\
(\rho u)_t+\dive(\rho u\otimes u)-\dive[2\mu(\rho)D(u)]+\nabla \pi=0,\\
\dive u=c_0\Delta\psi(\rho),\,\,\psi(\rho):=\rho^{-1},
\end{cases}
\end{equation}
where  $u=(u_1,u_2,u_3)$, $\rho$ and $\pi$ stand for the unknown velocity field, density and pressure respectively, $c_0>0$ is a fixed constant, $\mu$ is a positive function and 
\begin{equation}\label{equation12}
\mu(s)\in C^\infty(0,\infty).
\end{equation}
The deformation tensor $D(u)$ is denoted by
\begin{equation}
D(u) = \frac{1}{2}\left[\nabla u+(\nabla u)^t\right]=\frac{1}{2}(\partial_iu_j+\partial_ju_i),\quad 1\leq i,j\leq 3.
\end{equation} 

The system is equiped with the initial data
\begin{equation}\label{equation1.4}
u(x,0)=u_0(x),\quad \rho(x,0)=\rho_0(x),\quad x\in\Omega
\end{equation}
and one of the following boundary conditions:
\begin{equation}\label{equation1.6}
n\cdot \nabla\rho=0,\quad u\cdot n=0,\,\,\curle u\times n=-B\cdot u\quad \mathrm{on}\,\,\partial\Omega\times(0,T)\tag{A}
\end{equation}
where $B=B(x)$ is a smooth positive semi-definite matrix, or
\begin{equation}\label{equation1.7}
n\cdot \nabla\rho=0,\quad u=0\quad \mathrm{on}\,\,\partial\Omega\times(0,T),\tag{B}
\end{equation}

Combustion model is the low Mach number limit of the fully compressible Navier-Stokes equations, see \cite{lions1}, and it is tightly linked with the non-homogeneous incompressible Navier-Stokes equations (taking $c_0=0$) and the homogeneous one (taking $\rho$ be a constant). There are lots of works studying the combustion model \eqref{equation1.1} and the problems associated with it. The study of the system \eqref{equation1.1}, which has been introduced by A. Majda \cite{majda}, can date back to the 1980s. P. Embid \cite{embid} has proved the local-in-time well-posedness for classical solutions of the system \eqref{equation1.1} with the periodic boundary condition. Also, the local well-posedness was considered by H. B. da Veiga \cite{daveiga} with $\eqref{equation1.1}_3$ replaced by Fick's law $\psi(\rho)=\log \rho$. Danchin-Liao \cite{danchin}  established the local well-posedness in critical homogeneous Besov spaces under some smallness assumptions and that in non-homogeneous Besov space for arbitrarily large data.

For the global-in-time existence of weak and strong solutions of \eqref{equation1.1} and relative problems, P. Secchi \cite{secchi} proved that there exists a unique global strong solution in the two-dimensional domain providing the diffusion coefficient $c_0$ is small enough. They also considered the limiting behavior of the solutions when $c_0\to 0^+$ for 2D and 3D case and the convergence towards the corresponding solutions of non-homogeneous incompressible Navier-Stokes equations. Another remarkable work comes from P. Lions \cite{lions2} where he has shown the global existence of weak solutions only under a small perturbation of a constant density without any restriction on the initial velocity. However, in \cite{lions2}, he only gives the proof for $\RR^2$ and periodic case. Also in \cite{danchin}, Danchin-Liao proved the existence of solutions in critical homogeneous Besov spaces provided the initial density is closed to a constant and the initial velocity is small enough. For large initial data, Bresch-Essoufi-Sy \cite{bresch2} showed the global existence of the weak solutions for the combustion model in dimensions 2 and 3 by taking $\mu(\rho)$ be a specific function $\frac{c_0}{2}\log \rho$ and then, in \cite{bresch1}, Bresch-Giovangigli-Zatorska relaxed the restriction on $\mu(\rho)$ by renormalizing the mass equation. Recently, W. Tan \cite{tan} proved the global existence of the weak and strong solutions for the system \eqref{equation1.1} with general coefficient $\mu(\rho)$ in $\eqref{equation1.1}_2$ and $\psi(\rho)$ in $\eqref{equation1.1}_3$ provided $\norm{\nabla\rho}_{L^2}$ is small enough.

Another relative model to the system \eqref{equation1.1} is the so-called Kazhikhov-Smagulov type model, see \eqref{equation1.18}. In \cite{caixiaoyun,aaa}, Cai-Liao-Sun established the global-in-time existence of strong solutions to the initial-boundary value problem of a 2D Kazhikhov-Smagulov type model for incompressible non-homogeneous fluids with mass diffusion for the arbitrary size of initial data. For other works on the classical Kazhikhov-Smagulov's model, we refer the reader to \cite{antontsev,beirao}.

If the diffusion coefficient $c_0$ tends to zero, \eqref{equation1.1}	may reduce to the general non-homogeneous incompressible Navier-Stokes equations. There are also plenty of works studying it with the general viscosity coefficient $\mu(\rho)$, we refer the reader to \cite{abidi,cai2,cho,he2021,jun,huang,lions1} and the references therein.

In the final part of this paper we focus on the mechanism of blowup and the structure of possible singularities of strong solutions to the Navier-Stokes system. The blowup criterion on the Leray-Hopf weak solutions to the 3D incompressible homogeneous Navier-Stokes equations was first given by J. Serrin \cite{serrin1962}, that is, if a weak solution $u$ satisfies
\begin{equation}\label{serrin1.5}
u\in L^s(0,T;L^r),\quad \frac{2}{s}+\frac{3}{r}\leq 1,\quad 3<r\leq \infty,
\end{equation}
then it is regular. Later, He-Xin \cite{he2005} showed that the Serrin's criterion \eqref{serrin1.5} still holds even in the case of the incompressible MHD equations. For non-homogeneous incompressible Navier–Stokes equations, H. Kim \cite{kim2006} has shown that if $(\rho,u)$ blows up at $T^*$, then
\begin{equation}
\lim_{t\to T^*}\norm{u}_{L^s(0,T;L^r_w)}=\infty\quad \text{for all}\quad\frac{2}{s}+\frac{3}{r}\leq 1,\quad 3<r\leq \infty.
\end{equation}
In recent works, X. Zhong \cite{zhong2017} obtained a blowup criterion \eqref{serrin1.5} to the non-homogeneous incompressible heat conducting Navier–Stokes flows in bounded domain of $\RR^3$. For the compressible fluids, we refer reader to \cite{huang2013serrin,HLX,xu2012blow} and references therein.

However, the theory for the 3D combustion model with the general viscosity coefficient in the bounded domain is still blank. Therefore, our goal is obtaining the global existence of strong solutions with small initial data and extending the Serrin's blow-up criterion to \eqref{equation1.1}.

Before stating the main theorem, let us explain some notation and conventions used throughout the paper. First, we define the strong solutions as follows.
\begin{Definition}
$(\rho,u,\pi)$ is called a strong solution of \eqref{equation1.1} on $\Omega\times(0,T)$, if \eqref{equation1.1} holds almost everywhere in $\Omega\times(0,T)$ such that
\begin{equation}\label{strong}
\begin{cases}
\alpha\leq \rho\leq \beta,\\
\rho\in C([0,T];H^2)\cap L^2(0,T;H^3),\rho_t\in C([0,T];L^2)\cap L^2(0,T;L^2),\\
u\in C([0,T];H^1)\cap L^2(0,T;H^2),u_t\in L^2(0,T;L^2),\\
\pi\in L^2(0,T;H^1).
\end{cases}
\end{equation}
In particular, if \eqref{strong} holds for all $T\in (0,\infty)$, we call $(\rho,u,\pi)$ the global strong one.
\end{Definition}

For $1 \leq p \leq \infty$ and integer numbers $k \geq 1$, the standard Sobolev spaces and other functional spaces are defined as follows:
$$\begin{cases}
L^p=L^p(\Omega), \quad W^{k, p}=W^{k, p}(\Omega), \quad H^k=W^{k, 2}, \\
W_0^{k,p}=\overline{C_0^\infty}\,\,\text{closure in the norm of } W^{k,p},\\ 
\|\cdot\|_{B_1 \cap B_2}=\|\cdot\|_{B_1}+\|\cdot\|_{B_2}, \text { for two Banach spaces } B_1 \text { and } B_2, \\ 
H^1_\omega:=\left\{u\in H^1: u\cdot n=0,\,\,\curle u\times n=-B\cdot u\,\,\mathrm{on}\,\,\partial\Omega\right\}.
\end{cases}$$
Next, we set
$$
\int f d x := \int_{\Omega} f d x,\quad\int_{\partial} f := \int_{\partial\Omega} f d S
$$
and 
$$f_\Omega:=\frac{1}{|\Omega|}\int f$$
which is the average of a function $f$ over $\Omega$.

The weak, weak* and strong convergence of a sequence $\{f^n\}$ are respectively denoted by
\begin{equation*}
f^n\wconverge f,\quad f^n\wsconverge f,\quad f^n\sconverge f.
\end{equation*}

Finally, for two $3 \times 3$ matrices $A=\left\{a_{i j}\right\}, B=\left\{b_{i j}\right\}$, the symbol $A: B$ represents the trace of $A B$, that is,
$$A:B:=\mathrm{tr}(AB)=\sum_{i,j=1}^3a_{ij}b_{ji}.$$

Now, we give our main theorems. The first theorem concerns with the global existence of strong solutions for \eqref{equation1.1} when $\Omega$ is a bounded domain.

\begin{Theorem}\label{Theorem1.1}
Suppose that $\Omega\subset\RR^3$ is a simply connected bounded domain with smooth boundary and $(\rho_0,u_0)$ satisfies 
\begin{equation}\label{equation1.8}
0<\alpha\leq \rho_0\leq\beta<\infty,\quad x\in \Omega,
\end{equation}
the compatibility condition
\begin{equation}\label{equation1.3}
\begin{cases}
\dive u_0=c_0\Delta \rho_0^{-1},&x\in \Omega\\
u_0\cdot n=c_0n\cdot\nabla \rho_0^{-1},&x\in \partial\Omega\\
\end{cases}
\end{equation}
and $u_0\in H^1_\omega$, if $u$ satisfies the boundary condition \eqref{equation1.6}; $u_0\in H^1_0$, if $u$ satisfies the boundary condition \eqref{equation1.7}. 

Then there exists a positive constant $\delta$ depending only on $\Omega$, $c_0$, $\alpha$ and $\beta$ such that if 
\begin{equation}
\norm{\nabla u_0}_{L^2}\leq \delta
\end{equation}
and $\pi$ satisfies the normalized condition
\begin{equation}\label{normalized}
\int\pi=0,
\end{equation}
the system \eqref{equation1.1}--\eqref{equation1.4}, \eqref{equation1.6} or \eqref{equation1.7} admits a unique global strong solution $(\rho,u,\pi)$.
\end{Theorem}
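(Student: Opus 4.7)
The plan is to combine a short-time local existence result with uniform-in-time a priori estimates that can be closed under the smallness hypothesis $\|\nabla u_0\|_{L^2} \le \delta$. Local existence of a strong solution with the regularity \eqref{strong} on some maximal interval $[0,T^*)$ follows from standard linearization-and-contraction arguments analogous to those of Embid, Danchin-Liao, and Secchi (adapted to the present boundary conditions). The goal is then to show that if $\delta$ is chosen sufficiently small, depending only on $\Omega$, $c_0$, $\alpha$, $\beta$, the quantities controlling blow-up remain bounded on $[0,T^*)$, forcing $T^*=\infty$.

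The a priori program proceeds in stages. First, combining the mass equation with the divergence constraint $\dive u=c_0\Delta\psi(\rho)$ yields an effective quasilinear parabolic equation for $\rho$ with Neumann condition $n\cdot\nabla\rho=0$. This propagates the pointwise bounds $\alpha\le\rho\le\beta$ by a maximum principle and, by parabolic regularity, promotes $L^\infty_tH^1_x$ control of $\rho$ to $L^\infty_tH^2_x\cap L^2_tH^3_x$ control in terms of $u$. In parallel, the basic energy identity obtained by testing the momentum equation against $u$ gives an energy inequality; in the slip case \eqref{equation1.6} the assumption $B\ge 0$ provides the correct sign for the boundary term $\int_{\partial}Bu\cdot u$, while in the Dirichlet case \eqref{equation1.7} the boundary term vanishes.

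The crux of the argument is a higher-order estimate: testing the momentum equation against $u_t$ and invoking Stokes-type elliptic regularity for
$-\dive(2\mu(\rho)D(u))+\nabla\pi=-\rho u_t-\rho u\cdot\nabla u$
with the inhomogeneous divergence $\dive u=c_0\Delta\psi(\rho)$, I expect to derive a differential inequality of the schematic form
\begin{equation*}
\frac{d}{dt}\|\nabla u\|_{L^2}^2+\|\sqrt{\rho}\,u_t\|_{L^2}^2+\|\nabla^2 u\|_{L^2}^2 \le C\|\nabla u\|_{L^2}^{q}\bigl(\|u_t\|_{L^2}^2+\|\nabla^2 u\|_{L^2}^2\bigr)+\text{l.o.t.},
\end{equation*}
with $q>0$, where the nonlinear right-hand side can be absorbed into the left provided $\|\nabla u\|_{L^2}$ stays small. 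A standard continuity (bootstrap) argument then shows that the initial smallness of $\|\nabla u_0\|_{L^2}$ persists for all time. The pressure is recovered by solving the Neumann problem $\Delta\pi=\dive(\dive(2\mu(\rho)D(u))-\rho u_t-\rho u\cdot\nabla u)$ with the normalization $\int\pi=0$, yielding the required $L^2_tH^1_x$ bound.

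The principal obstacle, I expect, is the treatment of the boundary integrals arising in the higher-order estimate under the slip condition \eqref{equation1.6}: the matrix $B$ does not commute with differentiation, and the non-solenoidal character $\dive u=c_0\Delta\psi(\rho)\neq 0$ breaks several cancellations that are routine in the incompressible setting. To handle this I would rely on a Hodge-type decomposition adapted to $H^1_\omega$, on curl--div elliptic estimates exploiting $\curle u\times n=-B\cdot u$, and on using the identity $\dive u=c_0\Delta\psi(\rho)$ to convert surface integrals into volume terms involving higher derivatives of $\rho$, which are then controlled by the parabolic estimate from the first stage. The tight interplay between the nonstandard divergence constraint, the slip boundary condition, and the regularity of $\psi(\rho)$ is where the argument will be most delicate.
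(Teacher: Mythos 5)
Your high-level skeleton (local existence, a priori estimates closed by a smallness/bootstrap argument, continuation) matches the paper, but your plan to run the energy and higher-order estimates directly on $u$ misses the structural device on which the paper's proof actually rests, and this is a genuine gap rather than a cosmetic difference. Since $\dive u=c_0\Delta\psi(\rho)\neq 0$, testing the momentum equation against $u$ does not eliminate the pressure: one is left with $\int\nabla\pi\cdot u=-c_0\int\pi\,\Delta\rho^{-1}$, and at the level of the basic energy estimate there is no a priori control of $\pi$ whatsoever (the $L^2_tH^1$ bound for $\pi$ is only recovered afterwards, from Stokes regularity in terms of $u_t$ and $\nabla\rho_t$), so the estimate cannot be closed; the same problem reappears, in the form of products like $\|\nabla\pi\|_{L^2}\|\nabla(\rho^{-1})_t\|_{L^2}$ with no small prefactor, when you test against $u_t$. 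The paper circumvents precisely this by restoring solenoidality before testing: in the slip case \eqref{equation1.6} it substitutes $v=u-c_0\nabla\rho^{-1}$, so that $\dive v=0$, the mass equation becomes a quasilinear parabolic equation for $\rho$, and the pressure decouples when testing against $v$ and $v_t$ (with the boundary integrals handled via $v=v^{\perp}\times n$ and the modified slip condition \eqref{A'}); in the Dirichlet case \eqref{equation1.7} this substitution would give $v$ a nonhomogeneous boundary value $-c_0\nabla\rho^{-1}$, so the paper instead writes $u=w+Q$ with $Q=\cB[c_0\Delta\rho^{-1}]$ the Bogovski\v i correction, and tests against $w$ and $w_t$, which are divergence-free with zero trace. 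Your "Hodge-type decomposition adapted to $H^1_\omega$" is invoked only for boundary integrals, not for the pressure, and nothing in your sketch plays the role of either $v$ or $Q$.

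Two further points where the proposal is thinner than what is needed. First, your schematic differential inequality absorbs $\|u_t\|_{L^2}^2+\|\nabla^2u\|_{L^2}^2$ using pointwise-in-time smallness of $\|\nabla u\|_{L^2}$; the paper instead closes a bootstrap on $\sup_t\|\nabla\rho\|_{L^6}$ and on the time integral $\int_0^T(\|\nabla v\|_{L^2}^4+\|\Delta\rho\|_{L^2}^4)\,dt$ via Gr\"onwall with integrable coefficients, and in the variable-viscosity Stokes estimate (Lemma \ref{lemma26}) the factor $\|\nabla\mu(\rho)\|_{L^r}^{r/(r-3)}$ is exactly what the bound on $\|\nabla\rho\|_{L^6}$ is designed to tame -- this dependence does not appear in your outline. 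Second, for the Dirichlet case the local theory used in the paper (Lemma \ref{local}, Remark \ref{local2}) only covers constant or mollified viscosities $\mu(\rho_\epsilon)$, so the global result for \eqref{equation1.7} is obtained by proving uniform estimates for the approximate problems and passing to the limit $\epsilon\to 0^+$ by compactness (Simon's lemma); your appeal to "standard linearization-and-contraction" glosses over this step, which is a nontrivial part of the argument as written.
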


Next, we give the Serrin-type blowup criterion.
\begin{Theorem}\label{Theorem1.3}
If $(\rho, u,\pi)$ is a local strong solution on $\Omega\times(0,T^*)$ and $T^*<\infty$ is the maximal time of existence, then
\begin{equation}\label{serrin}
\lim_{T\to T^*}\norm{u}_{L^r(0,T;L^s)}=\infty,
\end{equation}
where $r$ and $s$ satisfy the relation
\begin{equation}
\frac{2}{s}+\frac{3}{r}\leq 1,\quad 3<r\leq \infty.
\end{equation}
\end{Theorem}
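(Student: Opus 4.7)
The plan is a contradiction argument. Assume
\begin{equation*}
M := \sup_{T<T^*}\|u\|_{L^r(0,T;L^s)}<\infty,
\end{equation*}
and prove that every norm appearing in \eqref{strong} stays bounded on $[0,T^*)$ in terms of $M$ and the initial data. Applying the local existence theorem at a time slightly before $T^*$ will then continue the solution past $T^*$, contradicting maximality.

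First I would handle the density. Because $\dive u = c_0\Delta(1/\rho)$, the mass equation is equivalent to the quasilinear parabolic equation
\begin{equation*}
\rho_t + u\cdot\nabla\rho - \tfrac{c_0}{\rho}\Delta\rho = -\tfrac{2c_0|\nabla\rho|^2}{\rho^2},
\end{equation*}
with the Neumann condition $n\cdot\nabla\rho=0$. A maximum-principle argument, fed by the Serrin bound on $u$, preserves $0<\alpha\leq\rho\leq\beta$; differentiating and applying parabolic $L^p$ theory then yields $\rho\in L^\infty(0,T^*;H^2)\cap L^2(0,T^*;H^3)$ and $\rho_t\in L^2L^2$, conditional on enough Sobolev regularity for $u$. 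In parallel, testing the momentum equation by $u$ gives the basic energy inequality $\sqrt\rho\,u\in L^\infty L^2$, $\nabla u\in L^2L^2$.

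Next, I would test the momentum equation by $u_t$ (or by $\dot u = u_t+u\cdot\nabla u$, Hoff-style). After integrating by parts, with the boundary contributions treated by the coercivity already developed for \eqref{equation1.6}--\eqref{equation1.7} in the proof of Theorem~\ref{Theorem1.1}, the key term to control is $\int\rho|u\cdot\nabla u|^2$. H\"older and Gagliardo-Nirenberg interpolation produce an estimate of the form
\begin{equation*}
\int\rho|u\cdot\nabla u|^2 \leq \varepsilon\|\nabla^2 u\|_{L^2}^2 + C(\varepsilon)f(t)\|\nabla u\|_{L^2}^2,
\end{equation*}
where $f$ is built from $\|u\|_{L^s}$ raised to the Serrin exponent, so that $f\in L^1(0,T^*)$ precisely because $(r,s)$ lies on the Serrin scale. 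The Stokes-type regularity $\|\nabla^2 u\|_{L^2}\lesssim \|\sqrt\rho\,u_t\|_{L^2}+(\text{lower-order in }\rho)$ lets one absorb $\varepsilon\|\nabla^2 u\|_{L^2}^2$ into $\|\sqrt\rho\,u_t\|_{L^2}^2$. Gr\"onwall's lemma then yields $\nabla u\in L^\infty L^2$ and $\sqrt\rho\,u_t\in L^2L^2$, after which $u\in L^2H^2$ and $\pi\in L^2H^1$ by elliptic theory; feeding these back into the density equation closes the remaining norms in \eqref{strong} uniformly on $[0,T^*)$.

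The principal obstacle is that $\dive u\neq 0$ makes the mass equation genuinely parabolic rather than a pure transport equation, so the Lagrangian propagation of density used in the incompressible non-homogeneous setting is unavailable. Every integration by parts in $u$ produces an extra term involving $c_0\Delta(1/\rho)$ that must be absorbed via the parabolic estimates on $\rho$; consequently the density and velocity bounds have to be closed simultaneously in one Gr\"onwall argument rather than sequentially. Ensuring that the resulting constants depend only on $M$, $\alpha$, $\beta$, $c_0$, $\Omega$, and the initial data, but not on $T<T^*$, is the delicate bookkeeping step on which the extension argument hinges.
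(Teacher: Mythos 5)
Your overall skeleton --- assume \eqref{4.1}, close all the norms in \eqref{strong} uniformly on $[0,T^*)$, then restart from a time near $T^*$ with the local existence lemma to contradict maximality --- is exactly the paper's strategy. The genuine gap is in your treatment of the density. You write the mass equation as a parabolic equation with source $-2c_0|\nabla\rho|^2/\rho^2$ and assert that ``differentiating and applying parabolic $L^p$ theory'' yields $\rho\in L^\infty(0,T^*;H^2)\cap L^2(0,T^*;H^3)$ ``conditional on enough Sobolev regularity for $u$''. That is circular (the higher Sobolev regularity of $u$ is only produced afterwards by the momentum estimate, which itself needs these density bounds), and, more seriously, the quadratic gradient nonlinearity is precisely what the Serrin hypothesis cannot absorb: in the higher-order estimates it forces you to control $\normf{|\nabla\rho|^3}_{L^2}=\norm{\nabla\rho}_{L^6}^3$, coming from $\rho^{-2}|\nabla\rho|^2$ in the density equation and from $c_0^2\dive(\rho\nabla\rho^{-1}\otimes\nabla\rho^{-1})$ in the momentum equation, and such cubic terms cannot be closed by Gagliardo--Nirenberg interpolation against a bound that is only $L^s_tL^r_x$ for $u$ plus the energy inequality. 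In Theorem \ref{Theorem1.1} this loop is closed by smallness of $\norm{\nabla u_0}_{L^2}$, which is unavailable here; your plan never says how these terms are handled, and identifying $\int\rho|u\cdot\nabla u|^2$ as ``the key term'' misses where the real obstruction sits.

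The paper's resolution, absent from your proposal, is to estimate $\log\rho$ alongside $\rho$: rewriting the mass equation as $(\log\rho)_t+v\cdot\nabla\log\rho-c_0\rho^{-1}\Delta\log\rho=0$ (see \eqref{log}) eliminates the $|\nabla\rho|^2$ source, and the identity $\nabla^2\rho^{-1}=\rho^{-2}\nabla^2\rho-2\rho^{-1}\nabla^2\log\rho$ converts the dangerous cubic terms in \eqref{equation1.18} into bilinear ones that the Serrin bound does control; note also that, via $u=v+c_0\nabla\rho^{-1}$, hypothesis \eqref{4.1} gives $\nabla\rho\in L^s(0,T;L^r)$ as well, which is what actually drives the Gr\"onwall arguments in Lemmas \ref{lemma44}--\ref{lemma45} and \ref{lemma46}. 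Two further points you gloss over: the $H^2$ estimate for the variable-viscosity Stokes problem cannot be invoked as plain ``elliptic theory'' --- one needs either quantitative control of $\nabla\mu(\rho)$ in $L^r$ (Lemma \ref{lemma26}) or, as the paper does here, H\"older continuity of $\rho$ from Lemma \ref{lemma2.7} combined with the frozen-coefficient Lemma \ref{lemma2.8}; and in case \eqref{equation1.6} the boundary integrals produced by testing with $v_t$ require the $v=v^\perp\times n$ reduction to interior integrals, not merely the ``coercivity already developed'' for Theorem \ref{Theorem1.1}. Without the $\log\rho$ device (or an equivalent substitute) the a priori bounds do not close, so the proposal as written does not prove the theorem.
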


\begin{Remark}
Our main theorems holds for all function $\mu(s)>0$ satisfying \eqref{equation12} even if $\mu(s)\to\infty$ as $s\to 0^+$ under the smallness assumption on $\norm{\nabla u_0}_{L^2}$. Theorem \ref{Theorem1.1} is the first result giving the existence of strong solutions for \eqref{equation1.1} with general viscosity coefficient in an arbitrary 3D bounded domain. Theorem \ref{Theorem1.3} is parallel to the classical Serrin's condition for 3D non-homogeneous Navier-Stokes equations.
\end{Remark}

\begin{Remark}
Comparing with the work of  \cite{huang,zhang2015} where they obtain the global strong solutions for non-homogeneous incompressible Navier-Stokes equations with density-depended viscosity coefficient $\mu(\rho)$ and the Dirichlet boundary conditions, our result can be seen as an extension from the divergence-free velocity field $u$, $\dive u=0$, to non-divergence-free one, that is, $\eqref{equation1.1}_3$.
\end{Remark}

\begin{Remark}
In our proof of the theorem, we only need $\psi(s)\in C^3(0,\infty)$, thus, more general $\psi(\rho)$ can also be considered under the same assumptions.
\end{Remark}

\begin{Remark}
From the hypothesis of Theorem \ref{Theorem1.1}, one may notice that we do not impose any information about the regularity of $\rho_0$ (except for the size restriction \eqref{equation1.8}). This is mainly because of the compatibility condition \eqref{equation1.3}. Indeed, for example, if $u_0\in H^1_\omega$, one can solve the following elliptic problem
\begin{equation*}
\begin{cases}
c_0\Delta\rho_0^{-1}=\dive u_0,&x\in \Omega,\\
n\cdot \nabla\rho_0^{-1}=0,&x\in \partial\Omega,
\end{cases}
\end{equation*}
from which the regularity of $\rho_0$ is completely determinded by that of $u_0$. More precisely, we have, for all $1<p\leq 6$,
\begin{equation}\label{1.16}
\begin{cases}
\norm{\nabla\rho_0}_{L^p}\leq C(p)\norm{u_0}_{L^p},\\
\norm{\nabla\rho_0}_{H^1}\leq C\norm{\nabla u_0}_{L^2}.
\end{cases}
\end{equation}
\end{Remark}

Now, we give some comments about the analysis throughout the whole paper. Generally speaking, in order to overcome the non-divergence-free of $u$, our proof for Theorem \ref{Theorem1.1} is based on two types of decomposition. For the first case, that is, $u$ satisfying the boundary condition \eqref{equation1.6}, we may write in view of $\eqref{equation1.1}_3$
\begin{equation}\label{equation1.17}
v = u-c_0\nabla\rho^{-1},
\end{equation}
Consequently, using \eqref{equation1.17}, the original system \eqref{equation1.1} can be changed into the following Kazhikohlv-Samgulov type model,
\begin{equation}\label{equation1.18}
\begin{cases}
\rho_t +v\cdot\nabla\rho + c_0\rho^{-2}\abs{\nabla\rho}^2 - c_0\rho^{-1}\Delta\rho=0,\\
\\
\begin{cases}
(\rho v)_t +\dive(\rho  v\otimes v) - \dive{[2\mu(\rho)D(v)]}+ \nabla \pi_1=c_0\dive{\left[2\mu(\rho)\nabla^2\rho^{-1}\right]}\\
- c_0 \dive{\left(\rho v\otimes\nabla\rho^{-1}\right)} -\dive\left(c_0\rho\nabla\rho^{-1}\otimes v\right)-c_0^2 \dive{\left(\rho \nabla\rho^{-1}\otimes\nabla\rho^{-1}\right)},
\end{cases}\\
\\
\dive{v} = 0,
\end{cases}
\end{equation}
where $\pi_1=\pi-c_0(\log\rho)_t$ is a modified pressure. Then, one can find that the mass equation $\eqref{equation1.1}_1$ becomes a parabolic type one, which provides us some high regularity properites for $\rho$, and, on the other hand, $v$ is divergence-free, which allows us to use some ``standard'' treatments of the classical incompressible Navier-Stokes equations. Thus, in Section \ref{section3}, we will mainly discuss the system \eqref{equation1.18} and try to derive the a priori esitmates of $(\rho,v)$. 

So, here, we give an explanation about the definition of $v_0$, the initial value of $v$, and the boundary condition related to $v$. Since we have the compatibility condition \eqref{equation1.3} from which we can find a unique function $v_0$ defined by
\begin{equation}\label{1.19}
v_0:=u_0-c_0\nabla\rho_0^{-1}.
\end{equation}
Then, we may impose $v_0$ as the initial value of $v$. Of course, in view of the estimates \eqref{1.16}, $v_0$ is also controlled by $u_0$, that is,
\begin{equation}\label{1.17}
\begin{cases}
\norm{v_0}_{L^p}\leq C(p)\norm{u_0}_{L^p},\\
\norm{\nabla v_0}_{L^2}\leq C\norm{\nabla u_0}_{L^2}.
\end{cases}
\end{equation}
For the boundary condition, if $u$ satisfies the condition \eqref{equation1.6}, applying $\curle$ on \eqref{equation1.17} implies that $v$ satisfies
\begin{equation}\label{A'}
\curle v\times n=-B\cdot(v+c_0\nabla\rho^{-1})\quad\mathrm{on}\,\,\partial\Omega\times(0,T).\tag{A'}
\end{equation}
In this case, we would call $(\rho,v)$ or $v$ satisfying the condition \eqref{A'}. In addition, from \eqref{1.19}, we can obtain the compatibility condition corresponding with $(\rho_0,v_0)$, that is,
\begin{equation}
\begin{cases}
\dive v_0=0,&x\in \Omega\\
v_0\cdot n=0,\curle v_0\times n=-B\cdot(v_0+c_0\nabla\rho_0^{-1}),&x\in \partial\Omega\\
\end{cases}
\end{equation}
provided $u_0\in H^1_\omega$. To sum up, our sketches of the proof is given by
\begin{equation*}
(\rho_0,u_0)\xRightarrow{\eqref{1.19}}(\rho_0,v_0)\implies \mathrm{existence\,\,of}\,\,(\rho,v)\xRightarrow{\eqref{equation1.17}}\mathrm{existence\,\,of}\,\,(\rho,u).
\end{equation*}

Another difficulty in this situation comes from the boundary integrals. To overcome it, we mainly adapt the idea from Cai-Li \cite{cai}. Since $v \cdot n=0$ on $\partial \Omega$, we have
$$
v=v^{\perp} \times n \quad\text {on } \partial \Omega,
$$
where $v^\perp= -v\times n$. Then, for $f\in H^1$,
$$\abs{\int_\partial v\cdot\nabla f}=\abs{\int_\partial v^\perp\times n\cdot \nabla f}=\abs{\int \curle v^\perp\cdot \nabla f}\leq C\norm{v}_{H^1}\norm{\nabla f}_{L^2},$$
which is clearly has advantages over using the trace inequality, since the latter needs $f\in H^2$.

For $u$ satisfying \eqref{equation1.7}, the situation is somewhat different, since, in every case that follows, $v$ satisfies the non-homogeneous Dirichlet boundary conditions, that is,
\begin{equation}
v=-c_0\nabla\rho^{-1}\quad\mathrm{on}\,\,\partial\Omega\times (0,T).
\end{equation}
Such condition may bring too much high order derivatives so that the boundary integrals are no longer controllable, especially when we treat the energy estimates for $v$. Therefore, we shall apply another type of decomposition whose idea comes from Lemma \ref{llemma2.2} (see Section \ref{section2}). From which, one can find a function $Q=\cB[c_0\Delta\rho^{-1}]$, where $\cB$ is the Bogovski\v i operator. As a consequence, $u$ will be splitted into
\begin{equation}\label{1.21}
u=w+Q.
\end{equation}
and, hence, one can hope to get the energy estimates for the system \eqref{equation1.1}

The advantage of above decomposition is obvious: on the one hand, from Lemma \ref{llemma2.2}, $Q$ is ``almost'' $\nabla\rho$, in other words, for all $1<p<\infty$, $Q$ has the following bounds
\begin{equation}\label{1.23}
\begin{cases}
\norm{Q}_{L^p}\leq C\norm{\nabla\rho}_{L^p},\\
\norm{Q}_{H^1}\leq C\left(\norm{\Delta\rho}_{L^2}+\norm{\nabla\rho}_{L^{3}}\norm{\nabla\rho}_{L^{6}}\right),\\
\norm{Q_t}_{L^p}\leq C\left(\norm{\nabla\rho_t}_{L^p}+\norm{|\rho_t||\nabla\rho|}_{L^p}\right);
\end{cases}
\end{equation}
on the other hand, it is easy to check that $w$ has a vanished boundary, which will not generate any bounary term when applying the energy estimates. Therefore, the strategy of the proof can be concluded as follows
\begin{equation*}
\begin{aligned}
(\rho_0,u_0)\xRightarrow[\eqref{1.23}]{\eqref{1.21}}\mathrm{estimates\,\,for}\,\,(\rho,u)\implies\cdots 
\end{aligned}
\end{equation*}

At last, to prove Theorem \ref{Theorem1.3}, we mainly adapt proofs mentioned above with a slight change. We first let \eqref{serrin} be false, that is,
\begin{equation}\label{serrin'}
\lim_{t\to T^*}\norm{u}_{L^s(0,T;L^r)}\leq M_0<\infty,
\end{equation}
then following the proof of Theorem \ref{Theorem1.1}, one may obtain the bounds for $(\rho,u,\pi)$ satisfying \eqref{strong}, which will give the contradictory ot the maximality of $T^*$. However, when it comes to the higher order estimates of $(\rho,v)$ (or $(\rho,u)$), one has to control 
$$\normf{|\nabla\rho|^3}_{L^2}=\norm{\nabla\rho}_{L^6}^3,$$
due to the nonlinear terms
$$\rho^{-2}\abs{\nabla\rho}^2,\quad c_0^2 \dive{\left(\rho \nabla\rho^{-1}\otimes\nabla\rho^{-1}\right)}$$
in $\eqref{equation1.18}_1$ and $\eqref{equation1.18}_2$, which is failed to be bounded by the Serrin's condition \eqref{serrin'}. To overcome it, we change $\eqref{equation1.18}_1$ into
\begin{equation}
\rho_t+v\cdot \nabla\rho-c_0\Delta\log\rho=0,
\end{equation}
which pushes us to estimate $\log\rho$ thanks to the pure transport constructure $\rho_t+v\cdot \nabla\rho$ and the disspation term $-\Delta\log\rho$, see Section \ref{section4} for details.

The rest of this paper is organized as follows. In Section \ref{section2}, we give some elementary results
which will be used in later. Section \ref{section3} is devoted to the a priori estimates for system \eqref{equation1.1} and the proof for Theorem \ref{Theorem1.1}. Finally, in Section \ref{section4}, we will give the proof of Theroem \ref{Theorem1.3}.

\section{Preliminaries}\label{section2}

First, we give the following local existence result for system \eqref{equation1.1}. We have already proved this for 2D case in our previous work \cite{zjw} and the 3D one can be established step by step only after some minor adaptions.
\begin{Lemma}\label{local}
Assume that $(\rho_0,u_0)$ satisfies the same conditions as in Theorem \ref{Theorem1.1} and $\Omega\subset \RR^3$ is a simply connected bounded domain with smooth boundary. Let $\pi$ saitisfies the condition \eqref{normalized}. Then there exists a positive time $T_1$ depending on $\Omega$, $c_0$, $\alpha$, $\beta$ and $\norm{u_0}_{H^1}$ so that the problem \eqref{equation1.1}--\eqref{equation1.4}, \eqref{equation1.6}admits an unique strong solution $(\rho, u,\pi)$ on $\Omega\times(0,T_1)$. 

Moreover, if $\mu(\rho)$ is a positive constant, then the above result also holds for the condition \eqref{equation1.7}. 
\end{Lemma}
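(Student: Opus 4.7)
The plan is to proceed via a linearization-and-iteration scheme adapted to the two decompositions described in the introduction, following the same strategy as the earlier 2D argument in \cite{zjw}. For boundary condition \eqref{equation1.6}, I would first recast the problem in the Kazhikhov-Smagulov form \eqref{equation1.18} via the substitution $v = u - c_0\nabla\rho^{-1}$; the mass equation then becomes a quasilinear parabolic equation in $\rho$ with Neumann data $n\cdot\nabla\rho=0$, while the momentum equation governs a divergence-free field $v$ satisfying the Navier-type condition \eqref{A'}. I would set up the iteration as follows: given an approximation $(\rho^n,v^n)$ lying in a fixed bounded subset of the solution class \eqref{strong}, first solve the linear parabolic problem for $\rho^{n+1}$ obtained by freezing the transport velocity at $v^n$ and the $c_0\rho^{-2}|\nabla\rho|^2$ term at $\rho^n$; then solve the linear Stokes problem with variable viscosity $\mu(\rho^{n+1})$ for $v^{n+1}$, with right-hand side given by the last four terms of $\eqref{equation1.18}_2$ evaluated at $(\rho^{n+1},v^n)$. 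Standard linear parabolic theory and variable-coefficient Stokes theory in smooth bounded domains under \eqref{A'} place each iterate in the class \eqref{strong}, and the modified pressure $\pi_1$ is recovered from an elliptic Neumann problem.

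The second step is to establish uniform bounds for $(\rho^{n+1},v^{n+1})$ on a short interval $[0,T_1]$ whose length depends only on $\Omega$, $c_0$, $\alpha$, $\beta$ and $\norm{u_0}_{H^1}$. The pointwise bounds $\alpha\le\rho^{n+1}\le\beta$ follow from the maximum principle for the linearized density equation after shrinking $T_1$, while the $H^2$-bound on $\rho^{n+1}$ and the $H^1\cap L^2H^2$-bound on $v^{n+1}$ come from energy estimates with a short-time Gronwall argument. The boundary integrals that arise for $v^{n+1}$ are handled by the device recalled in the introduction, writing $v=v^{\perp}\times n$ on $\partial\Omega$ to trade a trace for an interior integral of $\curle v^{\perp}\cdot\nabla f$. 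A standard Cauchy argument in the weaker norm $L^\infty(0,T_1;L^2)\cap L^2(0,T_1;H^1)$ applied to two consecutive iterates then yields strong convergence to a solution of \eqref{equation1.18}. Inverting \eqref{equation1.17} via \eqref{1.16} returns $(\rho,u)$ in the class \eqref{strong}; the pressure is determined by the elliptic equation obtained from applying $\dive$ to $\eqref{equation1.1}_2$ and normalized by \eqref{normalized}. Uniqueness follows from the same Cauchy estimate applied to two hypothetical solutions.

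For boundary condition \eqref{equation1.7} with constant $\mu$, I would instead use the decomposition $u=w+Q$ of \eqref{1.21}, where $Q=\cB[c_0\Delta\rho^{-1}]$ is the Bogovski\v i lift, so that $\dive w=0$ and $w=0$ on $\partial\Omega$. The momentum equation for $w$ then becomes an inhomogeneous Stokes system with constant viscosity and homogeneous Dirichlet data, for which full $H^2$-regularity is classical, and the bounds \eqref{1.23} transfer the regularity of $\rho$ to $Q$. The analogous iteration scheme then goes through with only routine modifications.

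The main obstacle I anticipate is the interplay between the iteration and the boundary conditions. In case \eqref{equation1.6} one must verify that the slip condition \eqref{A'}, which itself depends on $\nabla\rho^{-1}$, is preserved by the linearization at every step and is compatible with the variable-viscosity Stokes solver. In case \eqref{equation1.7} the restriction to constant $\mu$ is forced because a non-constant $\mu(\rho)$ would produce commutator terms of the form $\mu'(\rho)\nabla\rho\cdot\nabla w$ that must be estimated against a $w$ whose normal trace is constrained by the non-homogeneous data $-c_0\nabla\rho^{-1}$ inherited via $Q$, which cannot be absorbed by the standard homogeneous Dirichlet Stokes estimates. Apart from this the argument is a lengthy but routine 3D adaptation of \cite{zjw}, with the Sobolev embeddings $H^1\hookrightarrow L^6$ and $H^2\hookrightarrow C^0$ taking the place of their 2D analogues.
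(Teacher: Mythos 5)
The paper does not actually prove Lemma \ref{local}: it invokes the author's earlier 2D result in \cite{zjw} and asserts that the 3D case ``can be established step by step only after some minor adaptions,'' so there is no in-paper argument to compare yours against line by line. Your plan is the natural reconstruction of that cited construction and is consistent with the strategy the paper itself uses for the a priori estimates: the substitution $v=u-c_0\nabla\rho^{-1}$ leading to \eqref{equation1.18} with the Navier-type condition \eqref{A'} in case \eqref{equation1.6}, and the Bogovski\v i lift $u=w+Q$ of \eqref{1.21} with homogeneous Dirichlet data for $w$ in case \eqref{equation1.7}, together with a linearize--iterate--contract scheme in $L^\infty(0,T_1;L^2)\cap L^2(0,T_1;H^1)$.

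Two caveats on the details. First, the linearization you propose for the density equation (freezing $c_0\rho^{-2}|\nabla\rho|^2$ at $\rho^n$) does not literally yield $\alpha\le\rho^{n+1}\le\beta$ by the maximum principle: the frozen quadratic term acts as a one-signed forcing and can push $\rho^{n+1}$ outside $[\alpha,\beta]$ immediately, so at the iterate level you only get relaxed bounds such as $\alpha/2\le\rho^{n+1}\le 2\beta$ after shrinking $T_1$. It is cleaner to linearize the divergence-form equation $\rho_t+v\cdot\nabla\rho=c_0\dive(\rho^{-1}\nabla\rho)$ as $\rho^{n+1}_t+v^n\cdot\nabla\rho^{n+1}=c_0\dive\bigl((\rho^n)^{-1}\nabla\rho^{n+1}\bigr)$, which has no zeroth-order source and preserves the bounds exactly; in either case the bound $\alpha\le\rho\le\beta$ for the limit follows from the maximum principle applied to the nonlinear equation written as $\rho_t+v\cdot\nabla\rho=c_0\Delta\log\rho$. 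Second, your explanation of why case \eqref{equation1.7} is stated only for constant $\mu$ is speculative: the paper's Lemma \ref{lemma26} does supply variable-viscosity Dirichlet Stokes estimates, and Remark \ref{local2} indicates the restriction is an artifact of the construction in \cite{zjw} that is later bypassed by approximating $\mu(\rho)$ by $\mu(\rho_\epsilon)$; this does not affect the correctness of your argument for the constant-$\mu$ case, which is all the lemma claims, but the heuristic you give for why non-constant $\mu$ fails should not be presented as the actual obstruction.
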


\begin{Remark}\label{local2}
Even if we restrict $\mu(\rho)=\mu$ a positive constant in the case \eqref{equation1.7}, the existence result can be extended to $\mu(\rho)=\mu(\rho_\epsilon)$ (see \cite{zjw} for details), where 
\begin{equation*}
\rho_\epsilon\in C^\infty(\overline\Omega),\quad \alpha\leq \rho_\epsilon\leq \beta,\quad \rho_\epsilon\sconverge \rho\quad\text{in }W^{k,p}\text{ for all }\rho\in W^{k,p},\,k\in \NN,\,1\leq p<\infty.
\end{equation*}
This extension will help us to fill the gap between the existence of local strong solutions and that of global one when $(\rho,u)$ satisfies the condition \eqref{equation1.7}.
\end{Remark}

Next, we give the well-known Gagliardo-Nirenberg's inequalities which will be frequently used later.
\begin{Lemma}[Gagliardo-Nirenberg \cite{leoni,nirenberg}] \label{Lemma221}
Assume that $\Omega$ is a bounded domain in $\mathbb{R}^3$ with smooth boundary. Then there exist generic constants $C$ and $C_1$ which depend only on $p$ and $\Omega$ such that, for all $p \in[2,6]$ and $f \in H^1$,
\begin{gather*}
\|f\|_{L^p(\Omega)} \leq C\|f\|_{L^2}^{\frac{6-p}{2 p}}\|\nabla f\|_{L^2}^{\frac{3 p-6}{2 p}}+C_1\|f\|_{L^2}.
\end{gather*}
Moreover, if either $\left.f \cdot n\right|_{\partial \Omega}=0$ or $f_\Omega=0$, we can choose $C_1=0$.
\end{Lemma}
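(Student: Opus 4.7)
The plan is threefold: reduce to $\RR^3$ by extension, apply Sobolev embedding plus interpolation to get the inequality with a lower-order correction, then use a Poincar\'e-type estimate to remove the correction under the stated extra hypotheses. Since $\Omega$ is a smooth bounded domain, there is a bounded linear extension operator $E\colon H^1(\Omega)\to H^1(\RR^3)$ satisfying $\norm{Ef}_{H^1(\RR^3)}\leq C(\Omega)\norm{f}_{H^1(\Omega)}$. The critical Sobolev embedding $\dot H^1(\RR^3)\hookrightarrow L^6(\RR^3)$ then yields
$$\norm{Ef}_{L^6(\RR^3)}\leq C\norm{\nabla Ef}_{L^2(\RR^3)}\leq C\bigl(\norm{f}_{L^2(\Omega)}+\norm{\nabla f}_{L^2(\Omega)}\bigr).$$

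For $p\in[2,6]$ write $\tfrac{1}{p}=\tfrac{1-\theta}{2}+\tfrac{\theta}{6}$ with $\theta=\tfrac{3p-6}{2p}$ and $1-\theta=\tfrac{6-p}{2p}$; H\"older's inequality gives $\norm{f}_{L^p(\Omega)}\leq\norm{Ef}_{L^p(\RR^3)}\leq\norm{Ef}_{L^2}^{1-\theta}\norm{Ef}_{L^6}^{\theta}$. Plugging in the Sobolev bound, using the elementary inequality $(a+b)^{\theta}\leq 2^{\theta}(a^{\theta}+b^{\theta})$ valid for $\theta\in[0,1]$, and noting that the residual exponents on $\norm{f}_{L^2}$ sum to $1$, produces
$$\norm{f}_{L^p(\Omega)}\leq C\norm{f}_{L^2}^{\frac{6-p}{2p}}\norm{\nabla f}_{L^2}^{\frac{3p-6}{2p}}+C_1\norm{f}_{L^2},$$
which is the main inequality.

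For the final clause, under either $f_\Omega=0$ or $f\cdot n=0$ on $\partial\Omega$, a Poincar\'e inequality $\norm{f}_{L^2}\leq C(\Omega)\norm{\nabla f}_{L^2}$ is available: in the mean-zero case it is the Poincar\'e-Wirtinger inequality, while in the normal-trace-zero case it follows from a standard Rellich compactness argument, since a limiting vector field with vanishing gradient is constant and a nonzero constant vector field cannot be everywhere tangent to $\partial\Omega$ as $\Omega$ is bounded. Using this to rewrite
$$C_1\norm{f}_{L^2}=C_1\norm{f}_{L^2}^{\frac{6-p}{2p}}\norm{f}_{L^2}^{\frac{3p-6}{2p}}\leq C\norm{f}_{L^2}^{\frac{6-p}{2p}}\norm{\nabla f}_{L^2}^{\frac{3p-6}{2p}},$$
the correction term is absorbed into the first term, so one may take $C_1=0$. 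The main obstacle I expect is the vector-valued Poincar\'e inequality under $f\cdot n=0$: the compactness argument itself is routine, but one must invoke the topological fact that no nonzero constant vector field on $\RR^3$ is tangent to the entire boundary of a bounded domain, which is the only point where the geometry of $\Omega$ enters the argument in an essential way.
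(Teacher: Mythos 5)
Your argument is correct, and it is worth noting that the paper itself gives no proof of this lemma at all: it is quoted directly from the literature (\cite{leoni,nirenberg}), so there is no internal proof to compare against. Your route is the standard one and all steps check out: the extension $E\colon H^1(\Omega)\to H^1(\RR^3)$, the critical embedding $\dot H^1(\RR^3)\hookrightarrow L^6(\RR^3)$, the H\"older interpolation with $\theta=\tfrac{3p-6}{2p}$, and the absorption of the lower-order term via Poincar\'e. Three points are left implicit but are genuinely standard and easily supplied: (i) you use $\norm{Ef}_{L^2(\RR^3)}\leq C\norm{f}_{L^2(\Omega)}$, i.e.\ that the extension operator is bounded on $L^2$ simultaneously with $H^1$, which holds for the usual reflection/Stein construction; (ii) the conclusion that a gradient-free limit is a constant requires $\Omega$ connected, which is part of the meaning of ``domain'' here (the paper even assumes simple connectedness); (iii) in the normal-trace case one should note that the condition $f\cdot n=0$ survives the limit, e.g.\ by weak $H^1$ convergence and continuity (indeed compactness) of the trace operator, after which your observation that a nonzero constant vector cannot be tangent to all of $\partial\Omega$ (evaluate the normal at a maximizer of $x\mapsto c\cdot x$ over $\overline\Omega$) finishes the contradiction. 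With these routine remarks added, the proof is complete and yields constants depending only on $p$ and $\Omega$, as claimed.
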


The next two lemmas can be found in \cite{aramaki,von}.
\begin{Lemma}\label{lemma22}
Let $\Omega$ be a bounded simply connected domain in $\mathbb{R}^3$ with smooth boundary. Assume that $k\geq 0$ is an integer and $1<p<\infty$. Then for all $u\in W^{{k+1},p}$ with $u\cdot n=0$ on $\partial \Omega$, there exists a positive constant $C=C(k,p,\Omega)$ such that
\begin{equation*}
\norm{u}_{W^{{k+1},p}}\leq C\left(\norm{\dive{u}}_{ W^{{k},p}}+\norm{\curle{u}}_{ W^{{k},p}}\right).
\end{equation*}
\end{Lemma}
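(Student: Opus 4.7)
The statement is a classical $L^p$ ``div-curl'' estimate; my strategy is to reduce it to standard elliptic regularity via a Hodge-type decomposition, treating the base case $k=0$ first and then iterating by induction on $k$.

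For the base case, I would set $f := \dive u$ and $\omega := \curle u$, and note that $\int_\Omega f\,dx = \int_{\partial\Omega} u\cdot n\,dS = 0$ by the boundary hypothesis. Solve the Neumann problem
\[
\Delta\phi = f \text{ in }\Omega, \quad n\cdot\nabla\phi=0 \text{ on }\partial\Omega, \quad \int_\Omega \phi\,dx = 0,
\]
obtaining $\phi \in W^{2,p}$ with $\|\phi\|_{W^{2,p}} \leq C\|f\|_{L^p}$ by classical $L^p$ elliptic theory. Set $v := u - \nabla\phi$; then $\dive v = 0$, $v\cdot n = 0$ on $\partial\Omega$, and $\curle v = \omega$. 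Since $\Omega$ is simply connected, a Helmholtz-type argument yields a vector potential $A$ with $v = \curle A$, $\dive A = 0$, and $A\times n = 0$ on $\partial\Omega$; this $A$ solves $-\Delta A = \omega$ with natural boundary conditions, and $L^p$ elliptic regularity gives $\|A\|_{W^{2,p}} \leq C\|\omega\|_{L^p}$, whence $\|v\|_{W^{1,p}} \leq C\|\omega\|_{L^p}$. Combining these two bounds with a compactness/contradiction argument that promotes the gradient estimate to the full $W^{1,p}$ norm (using that the kernel of $u\mapsto(\dive u,\curle u,u\cdot n)$ is trivial on a simply connected domain) settles $k=0$.

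For the inductive step, I would use a partition of unity. In interior charts, differentiation commutes with $\dive$ and $\curle$, so $\partial^\alpha u$ satisfies $\dive\partial^\alpha u = \partial^\alpha f$ and $\curle\partial^\alpha u = \partial^\alpha \omega$, and the interior Calder\'on--Zygmund $L^p$ estimate applies directly. Near $\partial\Omega$, I would flatten the boundary and use only tangential derivatives, which preserve $u\cdot n = 0$ to leading order; normal derivatives are then recovered algebraically from the equations. For instance, after flattening to $\{x_3>0\}$, one has $\partial_3 u_3 = \dive u - \partial_1 u_1 - \partial_2 u_2$, and the remaining components of $\nabla u$ involving $\partial_3$ of the tangential components are expressible through tangential derivatives and components of $\curle u$. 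Iterating this together with the induction hypothesis produces the $W^{k+1,p}$ bound.

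The main obstacle is the inductive step at the boundary: differentiation does not preserve $u\cdot n = 0$, so commutator terms coming from boundary flattening and cut-offs must be absorbed at lower order via the induction hypothesis, and the algebraic recovery of normal derivatives must be organized carefully at each level to avoid any loss of regularity. An alternative that sidesteps this bookkeeping is to invoke the Agmon--Douglis--Nirenberg $L^p$ theory for elliptic boundary value systems, applied to the overdetermined operator $u \mapsto (\dive u,\curle u)$ with boundary operator $u\mapsto u\cdot n$; ellipticity of this system together with triviality of the kernel on simply connected $\Omega$ then yields the full $W^{k+1,p}$ bound in one stroke, which is essentially the path taken in \cite{aramaki,von}.
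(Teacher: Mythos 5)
The paper does not actually prove Lemma \ref{lemma22}; it only cites \cite{aramaki,von}, and your sketch (a Neumann potential absorbing $\dive u$, a vector potential for the remaining solenoidal part with $L^p$ elliptic regularity, triviality of the kernel of $u\mapsto(\dive u,\curle u,u\cdot n)$ on a simply connected domain to dispense with the zeroth-order term, then induction or Agmon--Douglis--Nirenberg theory for $k\geq 1$) is essentially the argument carried out in those references, so your route matches the one the paper relies on. The only points needing care in a full write-up are ones you already flag: the existence and estimate of the vector potential with its boundary condition when $\partial\Omega$ is disconnected (a simply connected $\Omega$ may still have several boundary components, so the finite-dimensional space of harmonic fields with vanishing tangential trace must be quotiented out, which is harmless since only $\curle A$ is used), and the commutator bookkeeping in the boundary-flattening inductive step.
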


\begin{Lemma}\label{lemma23}
Suppose that $\Omega$ is a bounded simply connected domain in $\mathbb{R}^3$ smooth boundary. Let $k \geq 0$ be an integer, $1<p<\infty$. Then for $u\in W^{k+1, p}$ with $u \times n=0$ on $\partial \Omega$, there exists a constant $C=C(k, p, \Omega)$ such that
$$
\|u\|_{W^{k+1, p}} \leq C\left(\|\operatorname{div} u\|_{W^{k, p}}+\|\operatorname{curl} u\|_{W^{k, p}}+\|u\|_{L^p}\right).
$$
\end{Lemma}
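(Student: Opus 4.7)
My plan is to derive the estimate from the vector identity
\[
-\Delta u = \curle \curle u - \nabla \dive u
\]
via standard $L^p$ elliptic regularity, by induction on $k$. The condition $u \times n = 0$ pins down the two tangential components of $u$ on $\partial\Omega$; supplemented by the natural complementary trace condition read off from $\dive u$, it turns the Laplacian into a properly elliptic boundary value problem in the sense of Agmon--Douglis--Nirenberg, for which the $L^p$ regularity theory is classical.

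For the base case $k = 0$, I would solve $\Delta \phi = \dive u$ in $\Omega$ with $\phi = 0$ on $\partial\Omega$, obtaining $\norm{\nabla \phi}_{W^{1,p}} \leq C \norm{\dive u}_{L^p}$ by Calder\'on--Zygmund estimates, and set $w := u - \nabla\phi$. Then $w$ is divergence-free with $\curle w = \curle u$, and on the simply connected domain $\Omega$ it can be represented through a vector potential $A$ solving a Dirichlet Laplace problem whose right-hand side involves $\curle u$. The same elliptic theory yields $\norm{w}_{W^{1,p}} \leq C(\norm{\curle u}_{L^p} + \norm{w}_{L^p})$, and combining the two bounds gives
\[
\norm{u}_{W^{1,p}} \leq C\bigl( \norm{\dive u}_{L^p} + \norm{\curle u}_{L^p} + \norm{u}_{L^p} \bigr),
\]
which is the $k = 0$ statement. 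The $\norm{u}_{L^p}$ term appears to absorb lower-order contributions from the potential-theoretic representation, unlike the $u \cdot n = 0$ setting of Lemma \ref{lemma22}.

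For the inductive step, I would differentiate the PDE tangentially up to order $k$: tangential derivatives preserve the condition $u \times n = 0$, so the base estimate applies to each of them. Normal derivatives of $u$ of order $k+1$ are then recovered from the equation $-\Delta u = \curle \curle u - \nabla \dive u$ itself, trading one normal differentiation for regularity of the right-hand side in the familiar way. Commutators between differentiation and both the Laplacian and the boundary operator produce only lower-order contributions, which are controlled by the inductive hypothesis together with the $\norm{u}_{L^p}$ term on the right.

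The main obstacle will be the bookkeeping near $\partial\Omega$: only tangential derivatives commute cleanly with $u \mapsto u \times n$, so one has to localize with a partition of unity, straighten the boundary, and carefully track which components of $\nabla^{k+1} u$ are delivered by tangential differentiation versus by the equation. A low-frequency compactness argument is also needed at each level to absorb intermediate terms back into $\norm{u}_{L^p}$. Once this technical packaging is in place, the lemma follows by iterating the base case.
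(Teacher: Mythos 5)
The paper offers no proof of this lemma at all: it is quoted verbatim from the references \cite{aramaki,von}, so your sketch has to be measured against the standard elliptic-regularity proofs given there. Your overall architecture is the right one -- view $-\Delta u=\curle\curle u-\nabla\dive u$ as an Agmon--Douglis--Nirenberg system with the relative (``electric'') boundary conditions, namely $u\times n=0$ together with the trace of $\dive u$, and the inductive step via tangential differentiation, localization and boundary flattening is indeed routine once the $k=0$ case is in hand. The problem is that the base case, which is where the entire analytic content of the lemma lives, is asserted rather than proved, and as written it would not go through.

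Concretely: (i) after setting $w=u-\nabla\phi$ you never use the boundary condition on $w$ (it does hold, since $\phi=0$ on $\partial\Omega$ forces $\nabla\phi$ to be normal, hence $w\times n=0$), yet without some boundary condition the claimed bound $\norm{w}_{W^{1,p}}\leq C(\norm{\curle w}_{L^p}+\norm{w}_{L^p})$ is false -- gradients of solid harmonics of high degree are divergence- and curl-free but have unbounded $W^{1,p}/L^p$ ratio -- so the phrase ``the same elliptic theory yields'' conceals exactly the difficulty to be overcome. (ii) The vector potential you invoke need not exist under the stated hypotheses: simple connectedness of $\Omega$ controls the first Betti number, whereas writing a divergence-free $w$ as $\curle A$ requires vanishing flux through every component of $\partial\Omega$. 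A spherical shell is simply connected, and there $w=\nabla(1/|x|)$ is divergence-free, curl-free and satisfies $w\times n=0$, but is not a curl; this finite-dimensional space of harmonic fields is precisely what the $\norm{u}_{L^p}$ term on the right-hand side must absorb, and your representation simply misses it. (iii) Even when $A$ exists, one cannot impose a componentwise Dirichlet condition $A=0$ on $\partial\Omega$; the admissible gauges are $\dive A=0$ with $A\cdot n=0$ or $A\times n=0$, and the $W^{2,p}$ estimate for $A$ under those conditions is again a div--curl estimate of the very type being proved, so the argument is circular. To close the gap you must either verify the Lopatinskii--Shapiro complementing condition for the boundary operators $(u\times n,\ \dive u)$ and invoke ADN theory, or carry out the potential-theoretic construction (whole-space Biot--Savart representation plus a boundary-layer correction, and a separate treatment of the harmonic fields), which is what the cited sources actually do.
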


Next, consider the problem
\begin{equation}\label{laplace}
\begin{cases}
\dive u=f, & x \in \Omega, \\ 
u =\Phi, & x \in \partial \Omega,
\end{cases}
\end{equation}
where $\Omega$ is a bounded smooth domain in $\mathbb{R}^3$.  We have the following standard estimates, which will be used to eliminate the non-homogeneity of equations.

\begin{Lemma}[\cite{galdi}, Theorem III.3.3]\label{llemma2.2}
Suppose that $\Phi\cdot n=0$ on $\partial\Omega$ and $f_\Omega =0$. Then,
\begin{enumerate}
\item[1)]
If $\Phi=0$, there exists a bounded linear operator $\mathcal{B}=\left[\mathcal{B}_1, \mathcal{B}_2,\mathcal{B}_3\right]$,
\begin{equation*}
\mathcal{B}: \{f\in L^{p}:f_\Omega =0\} \mapsto \left[W_0^{1,p}\right]^3
\end{equation*}
such that
\begin{equation*}
\|\mathcal{B}[f]\|_{W^{1, p}} \leq C(p)\|f\|_{L^{p}},
\end{equation*}
for all $p \in(1, \infty)$, and the function $Q=\mathcal{B}[f]$ solves the problem \eqref{laplace}. Moreover, if $f=\dive g$ with a certain $g \in L^r,\left.g \cdot n\right|_{\partial \Omega}=0$, then for any $r \in(1, \infty)$
\begin{equation*}
\|\mathcal{B}[f]\|_{L^r} \leq C(r)\|g\|_{L^r} .
\end{equation*}
$\cB$ is so-called the Bogovski\v i operator.
\item[2)]
If $f=0$, there exists a bounded linear operator $\cC=[\cC_1,\cC_2,\cC_3]$,
$$\cC: \{\Phi: \Phi\cdot n|_{\partial\Omega}=0,\,\,\dive\Phi\in L^p\}\mapsto  \left[W^{1,p}\right]^3$$
such that
$$\norm{\cC[\Phi]}_{W^{1,p}}\leq C(p)\norm{\dive \Phi}_{L^p},$$
for all $p\in (1,\infty)$ and the function $R=\cC[\Phi]$ sovles the problem \eqref{laplace}.
\end{enumerate}
\end{Lemma}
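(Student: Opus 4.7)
The plan is to prove Part 1) via Bogovski\v{i}'s explicit kernel construction combined with a partition-of-unity reduction to star-shaped pieces, and then to deduce Part 2) by subtracting off an extension of the boundary data. First I would cover $\overline{\Omega}$ by finitely many open subsets $\Omega_1, \ldots, \Omega_N$, each star-shaped with respect to an open ball $B_i \subset \Omega_i$; this is possible because $\partial\Omega$ is smooth. Given a smooth partition of unity $\{\varphi_i\}$ subordinate to $\{\Omega_i\}$, the localized pieces $\tilde f_i := \varphi_i f$ sum to $f$ but need not have zero mean on $\Omega_i$. The standard remedy, which exploits the connectedness of the cover, is to add correction functions supported in the overlaps $\Omega_i \cap \Omega_j$ (each with zero integral) that transfer mass between neighbours, producing $f_i$ with $(f_i)_{\Omega_i} = 0$, $\sum_i f_i = f$, and $\|f_i\|_{L^p} \le C\|f\|_{L^p}$. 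It then suffices to construct the operator on each star-shaped $\Omega_i$ separately.

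For star-shaped $\Omega_i$ with centre ball $B_i$, fix $\omega \in C_c^\infty(B_i)$ with $\int\omega = 1$, and define
\begin{equation*}
\mathcal{B}_i[f_i](x) := \int_{\Omega_i} N(x, x-y)\, f_i(y)\, dy, \qquad N(x,z) := z\int_1^\infty \omega(x+rz)\, r^{2}\, dr.
\end{equation*}
Differentiating under the integral and using $(f_i)_{\Omega_i}=0$ gives $\dive \mathcal{B}_i[f_i] = f_i$, while the cutoff $\omega$ forces $\mathcal{B}_i[f_i]$ to vanish in a neighbourhood of $\partial\Omega_i$, so its zero extension lies in $W_0^{1,p}(\Omega)$. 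To bound $\nabla \mathcal{B}_i[f_i]$ one splits $\nabla_x N(x, x-y)$ into a singular convolution kernel (with parameter $x$) plus a weakly singular remainder, and invokes Calder\'on--Zygmund theory (or Bogovski\v{i}'s original weak-type argument) to conclude $\|\mathcal{B}_i[f_i]\|_{W^{1,p}} \le C\|f_i\|_{L^p}$ for every $p \in (1,\infty)$. Summing over $i$ yields the operator $\mathcal{B}$ with the required bound.

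When additionally $f = \dive g$ with $g \cdot n = 0$ on $\partial\Omega$, I would integrate by parts in the kernel formula; the boundary contribution vanishes thanks to $g \cdot n = 0$ together with the compact support of the partition-of-unity cutoffs, leaving $\mathcal{B}[f]$ represented as a convolution-type operator of order zero applied to $g$, whose $L^r \to L^r$ boundedness for every $r\in(1,\infty)$ follows again from standard singular-integral theory. For Part 2), given $\Phi$ with $\Phi \cdot n = 0$ on $\partial\Omega$ and $\dive \Phi \in L^p$, I would choose a $W^{1,p}$-extension $R_0$ of $\Phi|_{\partial\Omega}$ via a tubular-neighbourhood trace lifting, and set $h := \dive \Phi - \dive R_0 \in L^p$. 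Since $R_0 \cdot n = \Phi \cdot n$ on $\partial\Omega$, the divergence theorem gives $h_{\Omega} = 0$, so Part 1) supplies $\mathcal{B}[h] \in W_0^{1,p}$ with $\dive \mathcal{B}[h] = h$; the final operator is $\mathcal{C}[\Phi] := R_0 + \mathcal{B}[h]$, whose $W^{1,p}$ bound reduces (by judicious choice of $R_0$) to $\|\dive \Phi\|_{L^p}$.

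The main technical obstacle is the Calder\'on--Zygmund step: $\nabla_x N(x, x-y)$ is not quite a convolution kernel because of the $x$-dependence inside $\omega(x+rz)$, so one must carefully extract the principal part and verify the classical size and cancellation conditions with constants uniform in the parameter $x$. Once this singular integral bound is in hand, the remaining ingredients---the partition-of-unity bookkeeping, the integration by parts producing the $L^r$ bound, and the boundary lifting for Part 2)---are essentially routine.
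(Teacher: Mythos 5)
The paper gives no proof of this lemma at all---it is quoted from Galdi, Theorem III.3.3---so you are reconstructing the textbook argument, and your overall plan (star-shaped covering with mass-transfer corrections in the overlaps, the explicit Bogovski\v{i} kernel on each piece, Calder\'on--Zygmund bounds, and a correction argument for part 2)) is indeed the standard route. But two steps fail as written. First, the kernel is mis-stated: on a domain star-shaped with respect to the ball $B_i$ the Bogovski\v{i} solution is $\cB_i[f_i](x)=\int_{\Omega_i}f_i(y)\,(x-y)\int_1^\infty\omega\bigl(y+r(x-y)\bigr)r^{2}\,dr\,dy$, i.e.\ the mollifier is evaluated at $y+r(x-y)=x+(r-1)(x-y)$, not at $x+r(x-y)$. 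The shift is not cosmetic: writing $z=x-y$, the off-diagonal divergence in $x$ of the correct kernel equals $-\omega(x)$, and together with the diagonal singularity this gives $\dive\cB_i[f_i]=f_i-\omega\int_{\Omega_i}f_i=f_i$; for the kernel you wrote the same computation produces $-2\,\omega(2x-y)-2\int_1^\infty r\,\omega\bigl(x+r(x-y)\bigr)\,dr$ instead of $-\omega(x)$, a genuinely $y$-dependent expression that does not cancel against mean-zero $f_i$, so the identity $\dive\cB_i[f_i]=f_i$ fails for the operator you defined. (A smaller imprecision: for general $f_i\in L^p$ the solution does not vanish in a neighbourhood of $\partial\Omega_i$; compact support in $\Omega_i$ holds for $C_0^\infty$ data, and $W_0^{1,p}$ membership then follows by density.)

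Second, part 2) does not come out of your construction. With $h:=\dive\Phi-\dive R_0$ you get $\dive\bigl(R_0+\cB[h]\bigr)=\dive\Phi$, not $0$; since part 2) concerns \eqref{laplace} with $f=0$, the correction must be $h=-\dive R_0$ (admissible because $\int_\Omega\dive R_0=\int_{\partial\Omega}\Phi\cdot n=0$), or, more directly, one should set $\cC[\Phi]:=\Phi-\cB[\dive\Phi]$, which is linear, divergence free, and equals $\Phi$ on $\partial\Omega$ because $\cB[\dive\Phi]\in W_0^{1,p}$ and $\int_\Omega\dive\Phi=\int_{\partial\Omega}\Phi\cdot n=0$. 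More seriously, the closing estimate cannot be reached ``by judicious choice of $R_0$'': no lifting of the boundary values of $\Phi$ can be bounded by $\norm{\dive\Phi}_{L^p}$ alone, since the trace of $\Phi$ is not controlled by its divergence (take $\Phi$ divergence free and tangential with large tangential trace). What the construction actually delivers is a bound of the form $\norm{\cC[\Phi]}_{W^{1,p}}\le C\left(\norm{\Phi}_{W^{1,p}}+\norm{\dive\Phi}_{L^p}\right)$; the inequality displayed in the lemma (and used again in the proof of Lemma \ref{lemma26}) is strictly stronger than this and than what Galdi's Theorem III.3.3 provides, so this is a gap you should flag explicitly rather than absorb into the choice of $R_0$.
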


The next two lemmas about the estimates of Stokes system are important to the higher order estimates of $v$. 
\begin{Lemma}\label{lemma2.3}
Let $\Omega$ be a bounded simply connnected domain in $\mathbb{R}^3$ with smooth boundary and $(u,p)$ satisfy the following Stokes equations
\begin{equation}\label{equation2.1}
\begin{cases}
-\Delta u+\nabla p=F, &x\in\Omega,\\
\dive u=0, &x\in\Omega,
\end{cases}
\end{equation}
where $p$ is normalized by the condition $\int p=0$ and $F\in L^2$. Then, we have the following conclusions:
\begin{enumerate}
\item[(1)] 
If $u$ satisfies the boundary condition $u\cdot n=0,\,\curle u\times n=\Phi$ on $\partial\Omega$, where $\Phi\in H^1$ is a function defined on $\Omega$. Then there exists a positive constant $C$ depending only on $\Omega$ such that
\begin{equation}\label{equation2.2}
\norm{u}_{H^2} +\norm{p}_{H^1}\leq C(\normf{F}_{L^2}+\norm{\Phi}_{H^1}).
\end{equation}
\item[(2)] 
If $u$ satisfies the boundary condition $u=\Phi$ on $\partial\Omega$, where $\Phi\in H^2$ is a function defined on $\Omega$. Then there exists a positive constant $C$ depending only on $\Omega$ such that
\begin{equation}\label{equation2.2'}
\norm{u}_{H^2} +\norm{p}_{H^1}\leq C(\normf{F}_{L^2}+\norm{\Phi}_{H^2}).
\end{equation}
\end{enumerate}
\end{Lemma}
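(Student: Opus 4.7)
The plan is to handle cases (1) and (2) separately, each time reducing to a problem with homogeneous boundary data so that the Hodge-type estimates of Lemmas \ref{lemma22}--\ref{lemma23} combined with classical Stokes regularity close the argument.

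For case (1), I first derive the $H^1$ bound by testing the equation against $u$. Using $\dive u = 0$, $u\cdot n = 0$, and the identity $\int(-\Delta u)\cdot u = \int|\curle u|^2 - \int_\partial(\curle u\times n)\cdot u$, I obtain
\begin{equation*}
\int |\curle u|^2 = \int F\cdot u + \int_\partial \Phi\cdot u.
\end{equation*}
The boundary term is controlled by $C\norm{\Phi}_{H^1}\norm{u}_{H^1}$ via the trace inequality, and Lemma \ref{lemma22} gives $\norm{u}_{H^1}\leq C\norm{\curle u}_{L^2}$, yielding the $H^1$ estimate. To promote this to $H^2$, take curl of the Stokes equation to eliminate the pressure:
\begin{equation*}
-\Delta\omega = \curle F,\quad \dive\omega = 0 \ \text{in}\ \Omega,\quad \omega\times n = \Phi \ \text{on}\ \partial\Omega,
\end{equation*}
where $\omega := \curle u$. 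Choose an $H^1$ lifting $\Psi$ with $\Psi\times n = \Phi$ on $\partial\Omega$ and $\norm{\Psi}_{H^1}\leq C\norm{\Phi}_{H^1}$ (available since the tangential trace is surjective). Then $\tilde\omega := \omega - \Psi$ has $\tilde\omega\times n = 0$, so Lemma \ref{lemma23} controls $\norm{\tilde\omega}_{H^1}$ by $\norm{\dive\tilde\omega}_{L^2}+\norm{\curle\tilde\omega}_{L^2}+\norm{\tilde\omega}_{L^2}$. Using $\curle\omega = -\Delta u = F - \nabla p$ (valid since $\dive u = 0$) together with the Helmholtz decomposition (or with testing the Stokes equation against $\nabla p$) to produce an $L^2$ bound on $\nabla p$ in terms of $F$, one closes the estimate to $\norm{\omega}_{H^1}\leq C(\norm{F}_{L^2}+\norm{\Phi}_{H^1})$. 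Lemma \ref{lemma22} with $k=1$ then gives $\norm{u}_{H^2}\leq C\norm{\omega}_{H^1}$, and $\norm{p}_{H^1}$ follows from $\nabla p = F+\Delta u$ together with $\int p = 0$ via Poincar\'e.

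For case (2), the compatibility $\int_\partial \Phi\cdot n\,dS = 0$ is forced by $\dive u = 0$ in $\Omega$, so an $H^2$-extension of $\Phi|_{\partial\Omega}$ combined with the Bogovski\u i operator of Lemma \ref{llemma2.2}(1) produces a divergence-free $R\in H^2$ with $R|_{\partial\Omega} = \Phi|_{\partial\Omega}$ and $\norm{R}_{H^2}\leq C\norm{\Phi}_{H^2}$. Then $\tilde u := u - R$ solves
\begin{equation*}
-\Delta\tilde u + \nabla p = F + \Delta R,\quad \dive\tilde u = 0 \ \text{in}\ \Omega,\quad \tilde u = 0 \ \text{on}\ \partial\Omega,
\end{equation*}
a homogeneous Dirichlet Stokes problem. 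Classical $H^2\times H^1$ Stokes regularity yields $\norm{\tilde u}_{H^2}+\norm{p}_{H^1}\leq C(\norm{F}_{L^2}+\norm{R}_{H^2})$, and \eqref{equation2.2'} follows upon restoring $R$.

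The main obstacle is case (1): the slip condition only pins down the tangential part $\omega\times n$, so the lifting $\Psi$ must be tangentially matched to $\Phi$ with a controlled $H^1$ bound, and the bound on $\norm{\curle\omega}_{L^2}$ must be closed simultaneously with $\norm{\nabla p}_{L^2}$ rather than in a circular manner. Case (2) is essentially routine once the divergence-free $H^2$ lifting $R$ is in hand.
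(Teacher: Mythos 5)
Your architecture for part (1) has the same skeleton as the paper's — energy estimate for the $H^1$ bound, then apply $\curle$ to kill the pressure and use Lemmas \ref{lemma22}--\ref{lemma23} on the vorticity — but the two arguments diverge at the decisive step, and yours has a gap there. The paper never needs an a priori bound on $\nabla p$: it tests the vorticity equation $-\Delta\curle u=\curle F$ against $\curle u-\Phi^{\perp}$, where $\Phi^{\perp}$ is chosen so that $(\curle u-\Phi^{\perp})\times n=0$ on $\partial\Omega$; the boundary integrals then vanish by the cyclic identity, and $\norm{\curle\curle u}_{L^2}\leq C(\norm{F}_{L^2}+\norm{\Phi}_{H^1})$ falls out directly, after which Lemmas \ref{lemma22}--\ref{lemma23} give $H^2$. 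You instead need $\norm{\curle\omega}_{L^2}=\norm{F-\nabla p}_{L^2}$, hence a prior pressure bound, and neither of your suggestions for it stands as written. The Helmholtz claim ``an $L^2$ bound on $\nabla p$ in terms of $F$'' is false for this boundary condition: the gradient part of $\Delta u=-\curle\curle u$ is generally nonzero, because $(\curle\curle u)\cdot n$ on $\partial\Omega$ is given by tangential derivatives of $\curle u\times n=\Phi$; with $F=0$ and $\Phi\neq 0$ the pressure is a nonconstant harmonic function, so any correct statement must read $\norm{\nabla p}_{L^2}\leq C(\norm{F}_{L^2}+\norm{\Phi}_{H^1})$. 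The alternative of testing the equation against $\nabla p$ produces exactly the boundary term $\int_{\partial}\Phi\cdot\nabla p$, which cannot be handled by the trace inequality since $\nabla p$ has no controlled trace; you need either $H^{1/2}$--$H^{-1/2}$ duality on $\partial\Omega$ or the paper's own device of writing the tangential field as $\Phi^{\perp}\times n$ and converting the boundary integral into $\int\curle\Phi^{\perp}\cdot\nabla p\leq C\norm{\Phi}_{H^1}\norm{\nabla p}_{L^2}$. With that (standard, but missing) argument supplied, your chain — Lemma \ref{lemma23} for $\tilde\omega$, then Lemma \ref{lemma22} with $k=1$ for $u$ — does close, and it even avoids the paper's slightly awkward removal of the residual $\norm{u}_{L^2}$ term by uniqueness.

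For part (2) the paper simply cites the classical non-homogeneous Dirichlet estimate (\cite{galdi}, Chapter IV), and that is also what your reduction ultimately rests on; but your construction of the divergence-free $H^2$ lifting is not routine as described. The Bogovski\v i bound in Lemma \ref{llemma2.2} is stated only at the $W^{1,p}$ level, and the higher-order estimate $\norm{\cB[f]}_{H^2}\leq C\norm{f}_{H^1}$ requires $f$ to vanish on $\partial\Omega$, which the divergence of an $H^2$-extension of $\Phi$ need not do. Either invoke the non-homogeneous Dirichlet Stokes estimate directly, as the paper does, or use a genuine solenoidal-extension theorem; as written, that step is unjustified.
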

\begin{proof}
We only give the proof for $(1)$, since $(2)$ can be found in \cite{galdi}, Chapter IV. Multiplying $u$ on both side of $\eqref{equation2.1}_1$ and integrating by parts, one has
\begin{equation*}
\int |\curle u|^2 =\int_\partial \Phi\cdot u +\int F\cdot u,
\end{equation*}
which, using Lemma \ref{lemma22} and trace inequality, implies that
\begin{equation}\label{equation23}
\norm{u}_{H^1}\leq C\left(\norm{F}_{L^2}+\norm{\Phi}_{H^1}\right).
\end{equation}
Then, $\nabla p\in H^{-1}$ and, using the condition $\int p=0$, we have 
\begin{equation}\label{equation24}
\norm{p}_{L^2}\leq C\left(\norm{F}_{L^2}+\norm{u}_{H^1}\right).
\end{equation}

Next, applying $\curle$ on $\eqref{equation2.1}_1$ leads to the following Laplace equations,
\begin{equation*}
-\Delta\curle u=\curle F.
\end{equation*}
Then, multiplying $\curle u-\Phi^\perp$ and integrating over $\Omega$ gives
\begin{align*}
&\int |\curle \curle u|^2-\int \curle \curle u\cdot \curle\Phi^\perp+\int_\partial (n\times \curle \curle u)\cdot \left(\curle u-\Phi^\perp\right)\\
&=\int F\cdot \left(\curle\curle u-\curle\Phi^\perp\right)+\int_\partial (n\times F)\cdot \left(\curle u-\Phi^\perp\right),
\end{align*}
that is, using the indentity $a\cdot (b\times c)=b\cdot (c\times a)=c\cdot (a\times b)$,
\begin{align*}
\int |\curle \curle u|^2-\int \curle \curle u\cdot \curle\Phi^\perp=\int F\cdot \left(\curle\curle u-\curle\Phi^\perp\right),
\end{align*}
which imlplies that
\begin{equation*}
\norm{\curle\curle u}_{L^2}\leq C\left(\norm{F}_{L^2}+\norm{\Phi}_{H^1}\right).
\end{equation*}
It follows from Lemma \ref{lemma22}--\ref{lemma23} and \eqref{equation23} that 
\begin{equation}\label{equation27}
\norm{u}_{H^2}\leq C\left(\norm{F}_{L^2}+\norm{\Phi}_{H^1}+\norm{u}_{L^2}\right).
\end{equation}
Because of the uniqueness of the Stokes system, one can eliminate the $L^2$-norm of $u$ on the right-hand side of \eqref{equation27}. On the other hand, of course, we have 
$$\norm{p}_{H^1}\leq C\norm{\nabla p}_{L^2}\leq C(\norm{\Delta u}_{L^2}+\norm{F}_{L^2}).$$
Thus, alonging with \eqref{equation27}, we complete the proof.
\end{proof}

\begin{Lemma}\label{lemma26}
Let $\Omega$ be a bounded simply connnected domain in $\mathbb{R}^3$ with smooth boundary. Let $(u,p)$ be a strong solution of the following Stokes type system,
\begin{equation}\label{equation28}
\begin{cases}
-\dive[2\mu(\rho)D(u)]+\nabla p=F, &x\in\Omega,\\
\dive u=0, &x\in\Omega,
\end{cases}
\end{equation}
where $p$ is normalized by the condition $\int p=0$, $F\in L^2$ and 
$$0<\underline\mu \leq \mu(\rho)\leq \overline\mu<\infty,\,\,\,\,\nabla\mu(\rho)\in L^r,\quad r\in (3,\infty]$$
Then, we have the following results:
\begin{enumerate}
\item[(1)]
If $u$ satisfies the boundary condition $u\cdot n=0,\,\curle u\times n=\Phi$ on $\partial\Omega$, where $\Phi\in H^1$ is a function defined on $\Omega$. Then there exists a positive constant $C$ depending only on $\underline\mu$, $\overline\mu$ and $\Omega$ such that
\begin{equation*}
\norm{u}_{H^2}+\norm{p}_{H^1}\leq C\left[\norm{\nabla\mu(\rho)}^{\frac{r}{r-3}}_{L^r}\norm{\nabla u}_{L^2}+\left(1+\norm{\nabla\mu(\rho)}^{\frac{r}{r-3}}_{L^r}\right)\left(\norm{F}_{L^2}+\norm{\Phi}_{H^1}\right)\right].
\end{equation*}
\item[(2)]
If $u$ satisfies the boundary condition $u=\Phi$ on $\partial\Omega$, where $\Phi\in H^2$ is a function defined on $\Omega$. Then there exists a positive constant $C$ depending only on $\underline\mu$, $\overline\mu$ and $\Omega$ such that
\begin{equation*}
\norm{u}_{H^2}+\norm{p}_{H^1}\leq C\left[\norm{\nabla\mu(\rho)}^{\frac{r}{r-3}}_{L^r}\norm{\nabla u}_{L^2}+\left(1+\norm{\nabla\mu(\rho)}^{\frac{r}{r-3}}_{L^r}\right)\left(\norm{F}_{L^2}+\norm{\Phi}_{H^2}\right)\right].
\end{equation*}
\end{enumerate}
\end{Lemma}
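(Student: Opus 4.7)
The plan is to reduce \eqref{equation28} to the constant-coefficient Stokes system of Lemma~\ref{lemma2.3} by introducing a rescaled pressure, then control the variable-viscosity corrections via the Gagliardo--Nirenberg inequality of Lemma~\ref{Lemma221} and Young's inequality.

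First I would rewrite the equation. Since $\dive u=0$ one has $\dive[2\mu(\rho)D(u)] = \mu(\rho)\Delta u + 2\nabla\mu(\rho)\cdot D(u)$, so the momentum equation becomes $-\mu\Delta u + \nabla p = F + 2\nabla\mu\cdot D(u)$. Dividing by $\mu$ and using $\mu^{-1}\nabla p = \nabla(p/\mu) + p\mu^{-2}\nabla\mu$, this rearranges to the constant-coefficient Stokes problem
\[
-\Delta u + \nabla q = \tilde F, \qquad \dive u = 0,
\]
for the modified pressure $q := p/\mu - (p/\mu)_\Omega$, which is normalized so that $\int q = 0$, with source
\[
\tilde F = \mu^{-1}F + 2\mu^{-1}(\nabla\mu)\cdot D(u) - p\mu^{-2}\nabla\mu.
\]
The boundary condition on $u$ is unchanged, so Lemma~\ref{lemma2.3}(1) or (2) applies and delivers $\|u\|_{H^2}+\|q\|_{H^1}\le C(\|\tilde F\|_{L^2} + \|\Phi\|_{H^1\text{ or }H^2})$, reducing the task to bounding $\|\tilde F\|_{L^2}$ and converting $\|q\|_{H^1}$ back into $\|p\|_{H^1}$.

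The dangerous pieces of $\tilde F$ are $\nabla\mu\cdot D(u)$ and $p\nabla\mu$, both estimated by $\|\nabla\mu\|_{L^r}$ times an $L^{2r/(r-2)}$-norm. Since $r>3$ forces $2r/(r-2)\in(2,6)$, Lemma~\ref{Lemma221} with exponent $\theta = 3/r$ yields
\[
\|D(u)\|_{L^{2r/(r-2)}}\le C\|\nabla u\|_{L^2}^{1-3/r}\|u\|_{H^2}^{3/r}, \qquad \|p\|_{L^{2r/(r-2)}}\le C\|p\|_{L^2}^{1-3/r}\|\nabla p\|_{L^2}^{3/r},
\]
the second being legal because $p_\Omega=0$. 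Young's inequality with conjugate exponents $(r/3,\,r/(r-3))$ then turns these into
\[
\epsilon\bigl(\|u\|_{H^2}+\|\nabla p\|_{L^2}\bigr)+C_\epsilon\|\nabla\mu\|_{L^r}^{r/(r-3)}\bigl(\|\nabla u\|_{L^2}+\|p\|_{L^2}\bigr).
\]
To close the loop I still need an independent bound on $\|p\|_{L^2}$: I test the equation against $\varphi=\cB[p]\in W_0^{1,2}$ furnished by Lemma~\ref{llemma2.2}, whose vanishing trace annihilates every boundary contribution in both cases (1) and (2), giving $\|p\|_{L^2}\le C(\|\nabla u\|_{L^2}+\|F\|_{L^2})$.

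Finally I convert from $q$ back to $p$. From $\nabla p = \mu\nabla q + p\mu^{-1}\nabla\mu$ together with the $\|p\|_{L^{2r/(r-2)}}$ estimate, a further Young absorption yields
\[
\|\nabla p\|_{L^2}\le C\|\nabla q\|_{L^2}+C\|\nabla\mu\|_{L^r}^{r/(r-3)}\|p\|_{L^2}.
\]
Plugging the Bogovski\v{\i} bound on $\|p\|_{L^2}$ into this and into the Stokes estimate for $(u,q)$, and absorbing the resulting $\epsilon\|u\|_{H^2}$ into the left-hand side, one recovers the stated inequality in both parts; the case distinction enters only through $\|\Phi\|_{H^1}$ versus $\|\Phi\|_{H^2}$. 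The main obstacle is the entangled absorption: $\|\tilde F\|_{L^2}$ produces \emph{both} $\epsilon\|u\|_{H^2}$ and $\epsilon\|\nabla p\|_{L^2}$ on the right, while $\|\nabla p\|_{L^2}$ is itself only recovered via $\|q\|_{H^1}$ plus another term containing $\|p\|_{L^{2r/(r-2)}}$, so the three absorptions must be carried out in the correct order. It is precisely the condition $r>3$ that makes the Young exponent $r/(r-3)$ finite and allows this cascade to close.
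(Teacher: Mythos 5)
Your proposal is correct and follows essentially the same route as the paper: rewrite the system as a unit-viscosity Stokes problem with modified pressure $p/\mu(\rho)$, apply Lemma \ref{lemma2.3}, and handle the commutator terms $\nabla\mu(\rho)\cdot D(u)$ and $p\,\nabla\mu(\rho)$ by H\"older in $L^r\times L^{2r/(r-2)}$, Gagliardo--Nirenberg interpolation and Young's inequality with exponent $\frac{r}{r-3}$, absorbing the $\varepsilon$-terms. The only (harmless) deviation is how the low-order pressure bound is obtained: you test with the Bogovski\v i field $\cB[p]$ to get $\norm{p}_{L^2}\leq C(\norm{\nabla u}_{L^2}+\norm{F}_{L^2})$, whereas the paper gets $\norm{u}_{H^1}+\norm{p}_{L^2}\leq C(\norm{F}_{L^2}+\norm{\Phi}_{H^1})$ from an energy estimate after lifting $\Phi$ by $\cC[\Phi]$; both land within the stated right-hand side since the term $\norm{\nabla\mu(\rho)}_{L^r}^{r/(r-3)}\norm{\nabla u}_{L^2}$ is explicitly allowed there.
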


\begin{proof}
We still only give the proof of $(1)$, since $(2)$ can be checked in a similar way. Using Lemma \ref{llemma2.2}, we can find a function $R=\cC[\Phi]$ such that $\dive R=0$ and $R|_{\partial\Omega}=\Phi$. Then, we rewrite $\eqref{equation28}_1$ as 
\begin{equation}\label{equation29}
-\operatorname{div}[2 \mu(\rho) D(u-R)]+\nabla p=F+\operatorname{div}[2 \mu(\rho) D(R)].
\end{equation}
Multiplying $u-R$ on both sides of \eqref{equation29}, integrating by parts like in Lemma \ref{lemma2.3} and using the control $\norm{R}_{H^1}\leq C\norm{\dive \Phi}_{L^2}$, one has
\begin{equation}
\norm{u}_{H^1}+\norm{p}_{L^2}\leq C\left(\norm{F}_{L^2}+\norm{\Phi}_{H^1}\right).
\end{equation}
Next, converting $\eqref{equation28}_1$ into the form
\begin{equation}
-\Delta u+\nabla\left[\frac{p}{\mu(\rho)}\right]=\frac{F}{\mu(\rho)}+\frac{2\nabla\mu(\rho)\cdot D(u)}{\mu(\rho)} -\frac{p\nabla\mu(\rho)}{\mu(\rho)^2},
\end{equation}
then using $\eqref{equation2.2}_2$ and Poincar\'e's inequality, we have
\begin{equation*}
\begin{aligned}
\norm{u}_{H^2}+\norm{p}_{H^1}&\leq C\left(\norm{u}_{H^2}+\norm{\nabla\frac{p}{\mu(\rho)}}_{L^2}+\norm{\frac{\nabla\mu(\rho)\cdot D(u)}{\mu(\rho)}}_{L^2}+\norm{\frac{p\nabla\mu(\rho)}{\mu(\rho)^2}}_{L^2}\right)\\
&\leq C(\normf{F}_{L^2}+\norm{\Phi}_{H^1}+\norm{\nabla\mu(\rho)}_{L^r}\norm{\nabla u}_{L^{\frac{2r}{r-2}}}+\norm{\nabla\mu(\rho)}_{L^r}\norm{p}_{L^{\frac{2r}{r-2}}})\\
&\leq C\left(\normf{F}_{L^2}+\norm{\Phi}_{H^1}+\norm{\nabla\mu(\rho)}^{\frac{r}{r-3}}_{L^r}\norm{\nabla u}_{L^2}+\norm{\nabla\mu(\rho)}^{\frac{r}{r-3}}_{L^r}\norm{p}_{L^2}\right)\\
&\quad+\frac{1}{2}\left(\norm{u}_{H^2}+\norm{p}_{H^1}\right),
\end{aligned}
\end{equation*}
which implies that
\begin{equation*}
\begin{aligned}
\norm{u}_{H^2}+\norm{p}_{H^1}&\leq C\left(\normf{F}_{L^2}+\norm{\Phi}_{H^1}+\norm{\nabla\mu(\rho)}^{\frac{r}{r-3}}_{L^r}\norm{\nabla u}_{L^2}+\norm{\nabla\mu(\rho)}^{\frac{r}{r-3}}_{L^r}\norm{p}_{L^2}\right)\\
&\leq C\left[\normf{F}_{L^2}+\norm{\Phi}_{H^1}+\norm{\nabla\mu(\rho)}^{\frac{r}{r-3}}_{L^r}\norm{\nabla u}_{L^2}+\norm{\nabla\mu(\rho)}^{\frac{r}{r-3}}_{L^r}\left(\norm{F}_{L^2}+\norm{\Phi}_{H^1}\right)\right].
\end{aligned}
\end{equation*}
This completes the proof.
\end{proof}

Next, we consider the H$\mathrm{\ddot{o}}$lder continuity of $\rho$ and the non-divergence type Stokes model. 
\begin{Lemma}[\cite{aaa,ladyzhenskaia,SUN2013,zjw}]\label{lemma2.7}
Let $v\in L^s(0,T;L^r)$, $\dive v = 0$, $v \cdot n = 0$ and $\rho\in C([0,T];L^2)\cap L^2(0,T;H^1)$ be the weak solution of equation $\eqref{equation1.8}_1$, $\alpha\leq \rho\leq\beta$. Let $\rho$ satisfy $n\cdot \nabla\rho=0$ on $\partial\Omega$ provided $\Omega\subset\RR^3$ is a bounded domain with smooth boudary. Suppose that $\rho_0\in C^{\gamma_0}(\overline\Omega)$ for some $\gamma_0\in (0,1)$, then $\rho$ is H$\mathit{\ddot{o}}$lder continuous. More precisely, $\rho\in C^{\gamma,\frac{\gamma}{2}}(\overline Q_T)$, for some $\gamma$ depending only on $\gamma_0$, $\alpha$ and $\beta$.
\end{Lemma}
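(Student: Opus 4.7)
The plan is to recast the $\rho$-equation as a uniformly parabolic linear equation in divergence form with bounded measurable principal coefficient and a divergence-free drift in a Serrin-admissible class, and then invoke the parabolic De Giorgi--Nash--Moser theory together with a reflection argument to handle the Neumann boundary.

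\textbf{Step 1 (reformulation).} Using $\dive(\rho^{-1}\nabla\rho)=\rho^{-1}\Delta\rho-\rho^{-2}\abs{\nabla\rho}^2$, the $\rho$-equation in $\eqref{equation1.18}_1$ can be written as
$$\rho_t+\dive\!\bigl(\rho v-c_0\rho^{-1}\nabla\rho\bigr)=0,$$
since $\dive v=0$. The principal coefficient $a(x,t):=c_0\rho^{-1}$ satisfies $c_0/\beta\le a\le c_0/\alpha$, so the equation is uniformly parabolic in divergence form with bounded measurable coefficients, and the drift $v$ is divergence-free with $v\cdot n=0$ and $v\in L^s(0,T;L^r)$.

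\textbf{Step 2 (interior H\"older estimate).} Because $\alpha\le\rho\le\beta$, the parabolic Nash--De Giorgi--Moser theorem for linear divergence-form equations with bounded measurable coefficients and Ladyzhenskaya--Serrin-type drift (as developed in \cite{ladyzhenskaia}, Ch.~III) yields local H\"older continuity
$$\rho\in C^{\gamma,\gamma/2}_{\mathrm{loc}}\bigl(\Omega\times(0,T]\bigr),$$
with an exponent $\gamma\in(0,1)$ depending only on $\alpha$, $\beta$, $c_0$, $\norm{v}_{L^s(0,T;L^r)}$ and $\Omega$.

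\textbf{Step 3 (boundary and initial continuity).} To carry the estimate up to $\partial\Omega$, I would flatten a boundary patch and extend $\rho$ by \emph{even} reflection across the flattened boundary, while extending the tangential components of $v$ evenly and the normal component oddly. The Neumann condition $n\cdot\nabla\rho=0$ guarantees that the reflected $\widetilde\rho$ belongs to $H^1$ on the enlarged domain, and $v\cdot n=0$ guarantees that the reflected $\widetilde v$ is still divergence-free and still lies in $L^s_tL^r_x$. Hence $\widetilde\rho$ is a weak solution of a divergence-form equation of exactly the same type on a two-sided neighborhood, and the interior estimate of Step~2 applies to produce H\"older continuity up to $\partial\Omega$. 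Finally, the hypothesis $\rho_0\in C^{\gamma_0}(\overline\Omega)$ allows a standard continuity-at-$t=0$ argument, using parabolic cylinders anchored at the initial slice together with barriers built from $\rho_0$, to propagate the regularity down to $t=0$ with exponent $\min(\gamma,\gamma_0)$. Combining these three pieces gives $\rho\in C^{\gamma,\gamma/2}(\overline{Q_T})$ with $\gamma$ depending only on $\gamma_0$, $\alpha$ and $\beta$ (the constants $c_0$, $\Omega$ and the Serrin norm of $v$ being fixed).

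The main obstacle I anticipate is the boundary step: one must check that after local flattening the chosen reflection preserves both the weak formulation and the divergence-free constraint on the drift, so that the extended solution on a two-sided neighborhood genuinely lies in $H^1$ and solves a divergence-form parabolic equation with coefficients of the same structure. Once this is verified, the proof reduces to a citation of the classical parabolic H\"older theory in \cite{ladyzhenskaia}, matching the references used by the paper.
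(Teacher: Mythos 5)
Your proposal takes essentially the same route the paper relies on: the paper offers no proof of Lemma \ref{lemma2.7} at all, simply citing \cite{aaa,ladyzhenskaia,SUN2013,zjw}, and your reduction of $\eqref{equation1.18}_1$ to a uniformly parabolic divergence-form equation $\rho_t+\dive(\rho v-c_0\rho^{-1}\nabla\rho)=0$ with bounded measurable coefficient $c_0\rho^{-1}\in[c_0/\beta,c_0/\alpha]$ and divergence-free Serrin-class drift, treated by De Giorgi--Nash--Moser/LSU theory with an even-reflection (conormal) argument at the Neumann boundary and a standard initial-layer argument using $\rho_0\in C^{\gamma_0}$, is exactly the standard argument those references embody. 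I see no gap; this is the correct and expected proof.
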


\begin{Lemma}\label{lemma2.8}
Let $\Omega$ be a bounded simply connnected domain in $\mathbb{R}^3$ with smooth boundary. Let $(u,p)$ be a strong solution of the following Stokes type system,
\begin{equation}\label{equation28}
\begin{cases}
-\mu(x)\Delta u+\nabla p=F, &x\in\Omega,\\
\dive u=0, &x\in\Omega,
\end{cases}
\end{equation}
where $p$ is normalized by the condition $\int p=0$, $F\in L^2$ and 
$$0<\underline\mu \leq \mu(x)\leq \overline\mu<\infty,\quad\mu(x)\in C(\overline\Omega).$$
Then, we have the following results:
\begin{enumerate}
\item[(1)] 
If $u$ satisfies the boundary condition $u\cdot n=0,\,\curle u\times n=\Phi$ on $\partial\Omega$, where $\Phi\in H^1$ is a function defined on $\Omega$. Then there exists a positive constant $C$ depending only on $\Omega$ such that
\begin{equation}\label{2.15}
\norm{u}_{H^2} +\norm{p}_{H^1}\leq C(\normf{F}_{L^2}+\norm{\Phi}_{H^1}).
\end{equation}
\item[(2)] 
 If $u$ satisfies the boundary condition $u=\Phi$ on $\partial\Omega$, where $\Phi\in H^2$ is a function defined on $\Omega$. Then there exists a positive constant $C$ depending only on $\Omega$ such that
\begin{equation}\label{2.16}
\norm{u}_{H^2} +\norm{p}_{H^1}\leq C(\normf{F}_{L^2}+\norm{\Phi}_{H^2}).
\end{equation}
\end{enumerate}
\end{Lemma}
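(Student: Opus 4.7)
The plan is to reduce to the constant-coefficient Stokes estimate of Lemma \ref{lemma2.3} via a freezing-of-coefficient argument enabled by the uniform continuity of $\mu\in C(\overline\Omega)$.

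A preliminary energy estimate $\norm{u}_{H^1}+\norm{p}_{L^2}\le C(\norm{F}_{L^2}+\norm{\Phi}_{H^{1}\text{ or }H^{2}})$ is obtained by testing $\eqref{equation28}_1$ against $u$ in case (1) (or against $u-\cC[\Phi]$ in case (2), with $\cC$ from Lemma \ref{llemma2.2}), exactly as in the opening steps of the proofs of Lemmas \ref{lemma2.3} and \ref{lemma26}; this uses only $\underline\mu\le\mu\le\overline\mu$ together with Lemmas \ref{lemma22} and \ref{lemma23}.

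For the full $H^2\times H^1$ bound, fix $\epsilon>0$ to be chosen small; by uniform continuity of $\mu$ there exists $\delta>0$ with $|\mu(x)-\mu(y)|<\epsilon$ whenever $|x-y|<\delta$. Cover $\overline\Omega$ by balls $B_i=B(x_i,\delta/2)$, $i=1,\dots,N$, and take a smooth partition of unity $\{\chi_i\}$ subordinate to this cover with $\sum_i\chi_i\equiv 1$ and bounded multiplicity $K$ (depending only on dimension). Setting $u_i:=\chi_i u$ and $\mu_i:=\mu(x_i)$, a direct expansion gives
$$-\mu_i\Delta u_i+\nabla(\chi_i p)=\chi_i F+(\mu_i-\mu)\chi_i\Delta u+\mu\bigl[2\nabla\chi_i\cdot\nabla u+u\Delta\chi_i\bigr]+p\nabla\chi_i,$$
a constant-coefficient Stokes equation whose leading error $(\mu_i-\mu)\chi_i\Delta u$ is at most $\epsilon|\Delta u|$. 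The divergence $\dive u_i=u\cdot\nabla\chi_i$ has vanishing integral (using $\dive u=0$ and the boundary condition), so the Bogovski\v i correction $\tilde u_i:=u_i-\cB[u\cdot\nabla\chi_i]$ from Lemma \ref{llemma2.2} is divergence-free; its boundary data agree with those of $u_i$, giving an $H^1$ (case (1), from $\chi_i\Phi+(\nabla\chi_i\times u)\times n$) or $H^2$ (case (2), from $\chi_i\Phi$) function controlled by $\norm{\Phi}+\norm{u}_{H^1}$.

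Applying Lemma \ref{lemma2.3} to each $(\tilde u_i,\chi_i p-c_i)$, with $c_i:=(\chi_i p)_\Omega$ normalizing the local pressure mean, yields
$$\norm{u_i}_{H^2}+\norm{\chi_i p}_{H^1}\le C\bigl(\epsilon\norm{\Delta u}_{L^2(B_i)}+\norm{F}_{L^2(B_i)}+\norm{u}_{H^1(B_i)}+\norm{p}_{L^2(B_i)}+\norm{\chi_i\Phi}\bigr),$$
with $C$ depending on $\underline\mu,\overline\mu,\Omega$ and the (now-fixed) partition. Squaring, summing over $i$, and invoking the bounded-multiplicity inequalities $\norm{\Delta u}_{L^2}^2\le K\sum_i\norm{\Delta u_i}_{L^2}^2$ and $\sum_i\norm{g}_{L^2(B_i)}^2\le K\norm{g}_{L^2}^2$ for any $g\in L^2$ (and the analogue for $\chi_i\Phi$, which is supported in $B_i$), one deduces
$$\norm{u}_{H^2}^2+\norm{p}_{H^1}^2\le CK^2\epsilon^2\norm{\Delta u}_{L^2}^2+CK^2\bigl(\norm{F}_{L^2}^2+\norm{u}_{H^1}^2+\norm{p}_{L^2}^2+\norm{\Phi}^2\bigr).$$
Choose $\epsilon$ so that $CK^2\epsilon^2\le 1/2$ to absorb the first term, and combine with the preliminary estimate to obtain \eqref{2.15} and \eqref{2.16}.

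The critical technical hurdle is making the absorption work cleanly: the bounded-multiplicity property is essential so that the number of balls $N$ (which grows as $\epsilon\to 0$) does not enter multiplicatively---only the dimensional constant $K$ does. Once $\epsilon$ is fixed, so are $\delta$ and the partition, and the remaining constants $\norm{\nabla\chi_i}_\infty,\norm{\Delta\chi_i}_\infty$ are merely large fixed numbers multiplying lower-order norms of $u$ and $p$ already controlled by the preliminary estimate. A secondary obstacle is the careful handling of the localized boundary data, particularly the curl correction $(\nabla\chi_i\times u)\times n$ in case (1) and the non-homogeneous Dirichlet trace $\chi_i\Phi$ in case (2), the latter possibly requiring a further homogenization via the $\cC$ operator of Lemma \ref{llemma2.2} before invoking Lemma \ref{lemma2.3}(2).
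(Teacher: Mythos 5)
Your proposal is essentially the paper's own argument: the paper disposes of Lemma \ref{lemma2.8} in a single line as a ``freezing point argument'' resting on the constant-coefficient case of Lemma \ref{lemma2.3}, and your partition-of-unity localization with the Bogovski\v i correction and absorption of the $\epsilon\norm{\Delta u}_{L^2}$ term is exactly the standard implementation of that argument, spelled out in more detail than the paper provides. The only point to tidy is the mean-zero claim for $\dive u_i=u\cdot\nabla\chi_i$ in case (2), where $\int u\cdot\nabla\chi_i=\int_{\partial}\chi_i\,\Phi\cdot n$ need not vanish unless $\Phi\cdot n=0$ on $\partial\Omega$ (as it does in the paper's application), so there you indeed need the preliminary homogenization of the Dirichlet data that you already mention.
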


\begin{proof}
The proof of Lemma \ref{lemma2.8} is an easy consequence of the freezing point argument, since we already have the conclusion when $\mu\equiv \text{constant}$ from the Lemma \ref{lemma2.3}.
\end{proof}

At last, in subsection \ref{P12}, we need the following lemma.
\begin{Lemma}[Simon \cite{novotny, simon}]\label{lemma221}
Let $X\hookrightarrow B\hookrightarrow Y$ be three Banach spaces with compact imbedding $X\hookrightarrow\hookrightarrow Y$. Further, let there eixst $0<\theta<1$ and $M>0$ such that
\begin{equation*}
\norm{v}_{B}\leq M\norm{v}_X^{1-\theta}\norm{v}_Y^\theta,\,\,\, \text{for all}\,\,v\in X\cap Y.
\end{equation*}
Denote for $T>0$,
\begin{equation*}
W(0,T):= W^{s_0,r_0}(0,T;X)\cap W^{s_1,r_1}(0,T;Y)
\end{equation*}
with $s_0,s_1\in\RR$, $r_1,r_0\in [1,\infty]$, and 
\begin{equation*}
s_\theta:=(1-\theta)s_0+\theta s_1,\,\,\frac{1}{r_\theta}:=\frac{1-\theta}{r_0}+\frac{\theta}{r_1},\,\,s^*:=s_\theta-\frac{1}{r_\theta}.
\end{equation*}
Assume that $s_\theta>0$ and $F$ is a bounded set in $W(0,T)$.
\begin{enumerate}
\item[(1)]
If $s^*\leq 0$, then $F$ is precompact in $L^p(0,T;B)$ for all $1\leq p<-\frac{1}{s^*}$.
\item[(2)]
If $s^*> 0$, then $F$ is precompact in $C([0,T];B)$. 
\end{enumerate}
\end{Lemma}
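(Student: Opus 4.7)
The plan is to reduce the statement to a fractional Aubin--Lions compactness argument in Bochner spaces. First I would apply the pointwise interpolation inequality $\norm{v}_B\leq M\norm{v}_X^{1-\theta}\norm{v}_Y^\theta$ together with H\"older's inequality in the time variable. Applied to each $f\in F$, this upgrades the hypothesis to an interpolated bound
\[
\norm{f}_{L^{r_\theta}(0,T;B)}\leq M\norm{f}_{L^{r_0}(0,T;X)}^{1-\theta}\norm{f}_{L^{r_1}(0,T;Y)}^{\theta},
\]
so $F$ is bounded in $L^{r_\theta}(0,T;B)$. This takes care of the integrability side of the Kolmogorov--Riesz--Fr\'echet criterion.

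Next I would establish uniform equicontinuity under time translations. For $h>0$, set $(\tau_h f)(t)=f(t+h)$. The pointwise interpolation followed by H\"older in $t$ yields, for a suitable exponent $q$ determined by $r_0,r_1,\theta$,
\[
\norm{\tau_h f-f}_{L^{q}(0,T-h;B)}\leq M\norm{\tau_h f-f}_{L^{r_0}(0,T-h;X)}^{1-\theta}\norm{\tau_h f-f}_{L^{r_1}(0,T-h;Y)}^{\theta}.
\]
The $X$-factor stays bounded by the hypothesis, while the $W^{s_1,r_1}$-regularity in $Y$ gives a quantitative decay $\norm{\tau_h f-f}_{L^{r_1}(0,T-h;Y)}\lesssim h^{\sigma}$ for some $\sigma>0$ when $s_1>0$; inserting this in the display above produces a uniform modulus of continuity in $L^q(0,T-h;B)$ for all $f\in F$.

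At this point I would split according to the sign of $s^{*}=s_\theta-1/r_\theta$. If $s^{*}\leq 0$, a vector-valued fractional Sobolev embedding of the intermediate space $W(0,T)$ into $L^{p}(0,T;B)$, valid for every $1\leq p<-1/s^{*}$, converts the above bounds, via the compact embedding $X\hookrightarrow\hookrightarrow Y$ at each time slice, into full precompactness in $L^{p}(0,T;B)$ by Kolmogorov--Riesz. If $s^{*}>0$, the fractional embedding $W(0,T)\hookrightarrow C([0,T];B)$ combined with the uniform modulus of continuity promotes $F$ to an equicontinuous and pointwise precompact family in $C([0,T];B)$, so Arzel\`a--Ascoli concludes.

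The main obstacle is the careful bookkeeping of fractional Sobolev exponents in Bochner spaces when $s_0,s_1$ are allowed to be negative or zero. In that regime the quantitative translation estimate $\norm{\tau_h f-f}_{L^{r_1}(Y)}\lesssim h^\sigma$ is not immediate and requires a Littlewood--Paley or mollification device together with the interpolation identities $s_\theta=(1-\theta)s_0+\theta s_1$ and $1/r_\theta=(1-\theta)/r_0+\theta/r_1$ used sharply to recover the precise critical threshold $-1/s^{*}$; extracting the corresponding compactness uniformly across $F$ is the delicate technical point.
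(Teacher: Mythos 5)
You should first note that the paper does not prove this lemma at all: it is quoted verbatim as a known result of Simon (cited to \cite{novotny,simon}) and used only in the compactness argument of subsection \ref{P12}, so there is no in-paper proof to compare with; the question is therefore whether your sketch would stand on its own as a proof of the classical statement, and as written it does not. The central gap is the regime the lemma is actually designed for: $s_0,s_1\in\RR$ are allowed to be zero or negative, with only $s_\theta>0$ assumed. Both pillars of your argument silently exclude this case. Your opening step, boundedness of $F$ in $L^{r_\theta}(0,T;B)$ by pointwise interpolation plus H\"older in $t$, presupposes $W^{s_i,r_i}(0,T;\cdot)\subset L^{r_i}(0,T;\cdot)$, i.e.\ $s_i\geq 0$; and your translation estimate $\norm{\tau_h f-f}_{L^{r_1}(0,T-h;Y)}\lesssim h^{\sigma}$ is, as you yourself say, only available when $s_1>0$. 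Acknowledging this as ``delicate bookkeeping'' does not close it: handling negative or zero $s_i$ is precisely the substance of Simon's fractional theorem. The actual route interpolates the difference quotients themselves, $\norm{\tau_h f-f}_{B}\leq M\norm{\tau_h f-f}_{X}^{1-\theta}\norm{\tau_h f-f}_{Y}^{\theta}$, to place $F$ in a Nikolskii-type space $N^{s_\theta,r_\theta}(0,T;B)$ whose positive order $s_\theta$ is what survives the interpolation, and the negative-order factors are treated there by regularization/duality arguments that your sketch only gestures at with ``a Littlewood--Paley or mollification device.''

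The second gap is that you never actually produce compactness. A vector-valued fractional Sobolev or Nikolskii embedding into $L^p(0,T;B)$ or $C([0,T];B)$ yields boundedness, not precompactness, and the Kolmogorov--Riesz (or Simon) criterion in Bochner spaces requires, besides the uniform translation estimate, that suitable values or time averages $\int_{t_1}^{t_2}f\,dt$ of the family lie in a compact subset of $B$. This is exactly where the hypothesis $X\hookrightarrow\hookrightarrow Y$ must enter, typically through the Ehrling-type inequality $\norm{v}_{B}\leq \varepsilon\norm{v}_{X}+C_\varepsilon\norm{v}_{Y}$ (a consequence of the stated interpolation inequality together with the compact embedding), which converts equicontinuity in $Y$ plus boundedness in $X$ into equicontinuity and compactness of averages in $B$. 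Your phrase ``via the compact embedding $X\hookrightarrow\hookrightarrow Y$ at each time slice'' does not set this mechanism up, and in the case $s^*>0$ the ``pointwise precompact'' input to Arzel\`a--Ascoli is likewise unjustified, since boundedness in $W^{s_0,r_0}(0,T;X)$ gives no pointwise-in-$t$ control of $f(t)$ in $X$. So the proposal is a plausible outline of the standard strategy, but the two steps it defers --- the negative-order translation estimates and the passage from boundedness to compactness --- are the proof.
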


\section{Proof of Theorem \ref{Theorem1.1}}\label{section3}
In this section, we assume $u_0\in  C^\infty(\overline\Omega)\cap H^1_0$ (or $H^1_\omega$). We always suppose that the assumptions in Theorem \ref{Theorem1.1} hold. In the following proof, in order to simplify the notation, we denote by $\varepsilon_i$, $i\in\NN_+$, the arbitrarily small number belongs to $(0, 1/2]$ and we use the subscript $C_{\varepsilon_i}$ to emphasize the dependency of the constant $C$ on $\varepsilon_i$.

\subsection{A Priori Estimates}

\subsubsection{Case \eqref{equation1.6}}

The key of the proof is deriving the following proposition. Using
the idea from \cite{huang,zhang2015}, we first assume the bounds \eqref{condition2.5} and obtain the a priori estimates of $(\rho,v)$ (see below). Then, using the a priori estimates in Lemma \ref{Lemma2.2}--\ref{lemma33} leads to a smaller bounds \eqref{32}, which means that we can close the energy estimates. 

\begin{Proposition}\label{prop3.1}
There exists a positive constant $\delta$ depending on $\Omega$, $c_0$, $\alpha$ and $\beta$ such that, if $\norm{\nabla u_0}_{L^2}\leq \delta$ and
\begin{equation}\label{condition2.5}
\sup_{t\in[0,T]}\norm{\nabla\rho}_{L^6}\leq 2,\quad\int_0^T\left(\norm{\nabla v}_{L^2}^4+\norm{\Delta\rho}_{L^2}^4\right)\,dt\leq 2\norm{\nabla u_0}^2_{L^2},
\end{equation}
then, one has
\begin{equation}\label{32}
\sup_{t\in[0,T]}\norm{\nabla\rho}_{L^6}\leq 1,\quad\int_0^T\left(\norm{\nabla v}_{L^2}^4+\norm{\Delta\rho}_{L^2}^4\right)\,dt\leq \norm{\nabla u_0}^2_{L^2}.
\end{equation}
\end{Proposition}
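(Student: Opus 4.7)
The proof should be a bootstrap/continuation argument: feed the working hypothesis \eqref{condition2.5} into the full chain of a priori estimates for \eqref{equation1.18}, then use $\norm{\nabla u_0}_{L^2}\le \delta$ to absorb each nonlinear contribution, cutting the constants in \eqref{condition2.5} in half to recover \eqref{32}. The plan involves three nested tiers of estimates that must be closed together rather than sequentially, plus a careful accounting of how each bound scales in $\delta$.

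Tier 1 (basic energy): test $\eqref{equation1.18}_2$ against $v$. Using $\dive v=0$, the slip boundary condition \eqref{A'}, Korn combined with Lemma \ref{lemma22} (so that $\norm{D(v)}_{L^2}$ controls $\norm{\nabla v}_{L^2}$), and estimating the four $\nabla\rho$-forced source terms by H\"older and Sobolev using the hypothesis $\norm{\nabla\rho}_{L^6}\le 2$, I expect $\sup_t \norm{\sqrt{\rho}\,v}_{L^2}^2 + \int_0^T\norm{\nabla v}_{L^2}^2\,dt \le C\norm{\nabla u_0}_{L^2}^2$. Tier 2 (density regularity): view $\eqref{equation1.18}_1$ as a parabolic equation with transport velocity $v$ and principal part $c_0\rho^{-1}\Delta\rho$; $L^2$-test against $-\Delta\rho$, and differentiate once in space and test against $|\nabla\rho|^4\nabla\rho$, to produce $\sup_t\norm{\nabla\rho}_{L^6}$ and $\int_0^T\norm{\Delta\rho}_{L^2}^2\,dt$, with the initial data piece controlled by \eqref{1.16}. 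Tier 3 (higher-order velocity): apply Lemma \ref{lemma26} to $\eqref{equation1.18}_2$ rewritten as a $\mu(\rho)$-Stokes system with forcing $F=\rho v_t+\rho v\cdot\nabla v+(\text{four }\nabla\rho\text{-terms})$ and boundary datum from \eqref{A'}; testing $\eqref{equation1.18}_2$ against $v_t$ supplies $\sup_t\norm{\nabla v}_{L^2}^2+\int_0^T\norm{\sqrt{\rho}\,v_t}_{L^2}^2\,dt$.

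Closing the bootstrap is then essentially arithmetic: $\int_0^T\norm{\nabla v}_{L^2}^4\le (\sup_t\norm{\nabla v}_{L^2}^2)\int_0^T\norm{\nabla v}_{L^2}^2 \le C\norm{\nabla u_0}_{L^2}^4$, and similarly for $\Delta\rho$; since $\norm{\nabla u_0}_{L^2}\le \delta$, each right-hand side is $O(\delta^2\norm{\nabla u_0}_{L^2}^2)$, so choosing $\delta$ small produces the halved constants in \eqref{32}. The main obstacle I foresee is the cross terms in $\eqref{equation1.18}_2$, especially $c_0^2\dive(\rho\nabla\rho^{-1}\otimes\nabla\rho^{-1})$ and $\dive(\rho\nabla\rho^{-1}\otimes v)$, which entangle second derivatives of $\rho$ with $v$; handling them without wasting the smallness requires that Tier 2 deliver $\norm{\Delta\rho}_{L^2}$ effectively in terms of $\norm{\nabla v}_{L^2}$ plus controllably small remainders, so that every cubic combination folds back into absorbable quantities. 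A secondary nuisance is the boundary integrals under \eqref{A'}: here the Cai--Li identity $v=v^\perp\times n$ on $\partial\Omega$, which converts $\int_\partial v\cdot\nabla f$ into $\int\curle v^\perp\cdot\nabla f$, is crucial for keeping boundary terms in $H^1$ rather than forcing $f\in H^2$.
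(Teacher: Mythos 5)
Your overall architecture (bootstrap under \eqref{condition2.5}, basic $v$-energy, parabolic treatment of $\eqref{equation1.18}_1$, $v_t$-test plus the $\mu(\rho)$-Stokes estimate of Lemma \ref{lemma26}, the Cai--Li conversion $v=v^\perp\times n$ for boundary integrals, and the final arithmetic $\int\norm{\nabla v}_{L^2}^4\le \sup_t\norm{\nabla v}_{L^2}^2\int\norm{\nabla v}_{L^2}^2=O(\delta^2\norm{\nabla u_0}_{L^2}^2)$) is the same as the paper's. But there is a concrete gap in your Tier 2: you propose to obtain $\sup_t\norm{\nabla\rho}_{L^6}$ by testing the differentiated mass equation against $|\nabla\rho|^4\nabla\rho$, and nowhere do you produce the $H^2$-level parabolic estimate
\begin{equation*}
\sup_{t\in[0,T]}\norm{\Delta\rho}_{L^2}^2+\int_0^T\norm{\nabla\Delta\rho}_{L^2}^2\,dt\leq C\norm{\nabla u_0}_{L^2}^2,
\end{equation*}
which the paper gets by testing with $-\nabla\Delta\rho\cdot\nabla$. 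This estimate is the linchpin in two places. First, your closing arithmetic ``and similarly for $\Delta\rho$'' needs $\sup_t\norm{\Delta\rho}_{L^2}^2\le C\norm{\nabla u_0}_{L^2}^2$ to bound $\int_0^T\norm{\Delta\rho}_{L^2}^4\,dt$, and in the paper the bound $\sup_t\norm{\nabla\rho}_{L^6}\le 1$ itself comes from $\norm{\nabla\rho}_{L^6}\le C_1\norm{\Delta\rho}_{L^2}$ plus this same estimate; your $L^6$-gradient test does not supply either quantity. Second, when you test $\eqref{equation1.18}_2$ against $v_t$ and when you bound $\norm{F}_{L^2}$ for the Stokes estimate, the source terms $c_0\dive[2\mu(\rho)\nabla^2\rho^{-1}]$ and $c_0^2\dive(\rho\nabla\rho^{-1}\otimes\nabla\rho^{-1})$ put $\norm{\nabla\Delta\rho}_{L^2}^2$ on the right-hand side with a constant that is \emph{not} small; the only way to absorb it is the dissipation $\int\norm{\nabla\Delta\rho}_{L^2}^2$ coming from the $\rho$-$H^2$ energy estimate (the paper adds $\tfrac{2C}{\nu}\norm{\Delta\rho}_{L^2}^2$ to the energy functional for exactly this purpose). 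The degenerate dissipation $\int|\nabla\rho|^4|\nabla^2\rho|^2$ produced by your $|\nabla\rho|^4\nabla\rho$ test cannot play this role, so your Tier 3 Gr\"onwall does not close. Your hope that Tier 2 ``deliver $\norm{\Delta\rho}_{L^2}$ in terms of $\norm{\nabla v}_{L^2}$'' is not how the coupling works: $\Delta\rho$ needs its own energy inequality, coupled to the $v$-estimates.

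A secondary omission: under \eqref{A'} the boundary datum contains $c_0\nabla\rho^{-1}$, so the $v_t$-test generates the term $\int_\partial c_0\mu(\rho)v_t\cdot B\cdot\nabla\rho^{-1}$, which after the $v^\perp\times n$ conversion and a total-time-derivative trick leaves terms in $\nabla\rho_t$; likewise $\nabla(\log\rho)_t$ (from the modified pressure) enters the Stokes forcing. The paper handles these by a separate $\rho_t$-energy estimate giving $\int\norm{\nabla\rho_t}_{L^2}^2$. You could alternatively bound $\norm{\nabla\rho_t}_{L^2}$ by $\norm{\nabla\Delta\rho}_{L^2}$ plus lower-order terms directly from the equation, but that again presupposes the missing $H^3$-dissipation of $\rho$. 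Once you add the $-\nabla\Delta\rho$ estimate for $\rho$ (and some control of $\nabla\rho_t$), your scheme matches the paper's proof.
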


We first come to prove the lower order estimates of $(\rho,v)$.
\begin{Lemma}\label{Lemma2.2}
Let $(\rho,v)$ be a smooth solution of \eqref{equation1.18}, then $\alpha\leq\rho\leq\beta$ and there exist some positive constant $C$ depending only on $\Omega$, $c_0$, $\alpha$ and $\beta$ such that, for all $T\in (0,\infty)$,
\begin{equation}\label{2.7}
\sup_{t\in[0,T]}\norm{\rho-(\rho_0)_\Omega}_{L^2}^2+\int_0^T\norm{\nabla\rho}_{L^2}^2\,dt\leq C\norm{\nabla u_0}_{L^2}^2.
\end{equation}
Furthermore, if $\norm{\nabla u_0}_{L^2}\leq 1$ and the condition \eqref{condition2.5} holds, one has
\begin{gather}
\sup_{t\in[0,T]}\norm{\nabla\rho}_{L^2}^2+\int_0^T\left(\norm{\nabla\rho}_{L^3}^4+ \norm{\Delta\rho}_{L^2}^2\right)\,dt\leq C\norm{\nabla u_0}^2_{L^2},\label{2.8}\\
\sup_{t\in[0,T]}\norm{v}^2_{L^2}+\int_0^T\left(\norm{v}_{L^3}^4+\norm{\nabla v}_{L^2}^2\right)\,dt\leq C\norm{\nabla u_0}^2_{L^2}.\label{2.9}
\end{gather}
\end{Lemma}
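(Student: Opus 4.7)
The plan is to perform energy estimates on the Kazhikhov--Smagulov reformulation \eqref{equation1.18}, treating the bounds in \eqref{condition2.5} as the two small parameters that close the system. The first observation is that, via the identity $c_0\rho^{-1}\Delta\rho-c_0\rho^{-2}|\nabla\rho|^2=c_0\Delta\log\rho$, equation $\eqref{equation1.18}_1$ takes the divergence-form parabolic shape
\begin{equation*}
\rho_t+v\cdot\nabla\rho=c_0\Delta\log\rho.
\end{equation*}
Combined with $\dive v=0$, $v\cdot n=0$, and $n\cdot\nabla\rho=0$ on $\partial\Omega$, the classical maximum/minimum principle yields $\alpha\leq\rho\leq\beta$, and integration over $\Omega$ shows $(\rho)_\Omega(t)\equiv(\rho_0)_\Omega$. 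Testing this reformulation against $\rho-(\rho_0)_\Omega$ annihilates the transport term (using $\dive v=0$, $v\cdot n=0$) and turns the diffusion into $c_0\int\rho^{-1}|\nabla\rho|^2\geq c_0\beta^{-1}\norm{\nabla\rho}_{L^2}^2$; integrating in time and bounding the initial data by $\norm{\rho_0-(\rho_0)_\Omega}_{L^2}^2\leq C\norm{\nabla\rho_0}_{L^2}^2\leq C\norm{\nabla u_0}_{L^2}^2$ via Poincar\'e and the compatibility estimate \eqref{1.16} yields \eqref{2.7}.

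For \eqref{2.8}, I would test the form $\rho_t+v\cdot\nabla\rho-c_0\rho^{-1}\Delta\rho=-c_0\rho^{-2}|\nabla\rho|^2$ against $-\Delta\rho$. The Neumann boundary condition kills the surface term, so the time derivative gives $\tfrac12\tfrac{d}{dt}\norm{\nabla\rho}_{L^2}^2$, the linear diffusion produces the coercive $c_0\beta^{-1}\norm{\Delta\rho}_{L^2}^2$, and the transport and nonlinear remainder contribute $\norm{v}_{L^3}\norm{\nabla\rho}_{L^6}\norm{\Delta\rho}_{L^2}+C\norm{\nabla\rho}_{L^4}^2\norm{\Delta\rho}_{L^2}$. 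Using the assumption $\norm{\nabla\rho}_{L^6}\leq 2$, the Gagliardo--Nirenberg interpolation from Lemma \ref{Lemma221}, and the $L^4_t$-smallness of $\norm{\nabla v}_{L^2}$ from \eqref{condition2.5}, Young's inequality absorbs both into the dissipation. The $L^4_tL^3_x$ piece then follows from $\norm{\nabla\rho}_{L^3}\leq C\norm{\nabla\rho}_{L^2}^{1/2}\norm{\Delta\rho}_{L^2}^{1/2}$.

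The estimate \eqref{2.9} comes from testing $\eqref{equation1.18}_2$ with $v$. Using the modified continuity equation $\rho_t+\dive(\rho v)=c_0\Delta\log\rho$, the standard kinetic-energy computation yields
\begin{equation*}
\int[(\rho v)_t+\dive(\rho v\otimes v)]\cdot v\,dx=\tfrac12\tfrac{d}{dt}\int\rho|v|^2+\tfrac{c_0}{2}\int|v|^2\Delta\log\rho.
\end{equation*}
The viscous term, combined with Korn's inequality and Lemma \ref{lemma22}, dominates a multiple of $\norm{\nabla v}_{L^2}^2$; the pressure integrates out by $\dive v=0$ and $v\cdot n=0$. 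Each source term in $\eqref{equation1.18}_2$ is a divergence, so integration by parts transfers one derivative onto $\nabla v$ and leaves volume integrals bounded by $\norm{\nabla\rho}_{H^1}\norm{\nabla v}_{L^2}$-type products, already controlled by \eqref{2.8} and the dissipation. Initial data are bounded via $\norm{v_0}_{L^2}\leq C\norm{u_0}_{L^2}\leq C(\norm{\dive u_0}_{L^2}+\norm{\curle u_0}_{L^2})\leq C\norm{\nabla u_0}_{L^2}$ using Lemma \ref{lemma22} applied to $u_0$ together with the compatibility bound \eqref{1.16}. Smallness from \eqref{condition2.5} and $\norm{\nabla u_0}_{L^2}\leq 1$ then closes the estimate, and the $L^4_tL^3_x$ bound on $v$ follows from Gagliardo--Nirenberg.

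The main obstacle I expect is control of the boundary integrals arising in the $v$-energy identity. The Navier-type condition $\curle v\times n=-B(v+c_0\nabla\rho^{-1})$ does not homogenize the surface terms, and naive use of the trace inequality would demand $H^2$-regularity for $v$ which is unavailable at the energy level. Following the trick of Cai--Li \cite{cai} mentioned in the introduction, I would exploit $v=v^\perp\times n$ on $\partial\Omega$ with $v^\perp=-v\times n$ to convert any surface integral
\begin{equation*}
\int_\partial v\cdot\nabla f=\int_\partial(v^\perp\times n)\cdot\nabla f=\int\curle v^\perp\cdot\nabla f\leq C\norm{v}_{H^1}\norm{\nabla f}_{L^2}
\end{equation*}
into a volume integral at the $H^1$-level, precisely the regularity the dissipation provides, so that smallness of $\norm{\nabla u_0}_{L^2}$ suffices to absorb every residual term.
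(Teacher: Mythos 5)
Your overall route coincides with the paper's: maximum principle for $\alpha\leq\rho\leq\beta$, testing $\eqref{equation1.18}_1$ with $\rho-(\rho_0)_\Omega$ and with $-\Delta\rho$, testing $\eqref{equation1.18}_2$ with $v$, coercivity via $\dive v=0$ and Lemma \ref{lemma22}, and the Cai--Li conversion of the boundary integrals; your kinetic-energy identity with the extra term $\tfrac{c_0}{2}\int|v|^2\Delta\log\rho$ is an equivalent form of the paper's bookkeeping (in the paper this term cancels against the tested source $-c_0\dive(\rho v\otimes\nabla\rho^{-1})$), and your treatment of \eqref{2.7} and \eqref{2.9} matches the paper's.

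The genuine gap is in your step for \eqref{2.8}. You bound the transport term by $\norm{v}_{L^3}\norm{\nabla\rho}_{L^6}\norm{\Delta\rho}_{L^2}$ and assert that Young's inequality absorbs it into the dissipation. With $\norm{\nabla\rho}_{L^6}\leq 2$, absorbing $\varepsilon\norm{\Delta\rho}_{L^2}^2$ leaves a remainder of order $\norm{v}_{L^3}^2$ (or, via $\norm{v}_{L^3}\leq C\norm{\nabla v}_{L^2}$, of order $\norm{\nabla v}_{L^2}^2$), and this remainder is not multiplied by the Gr\"onwall quantity $\norm{\nabla\rho}_{L^2}^2$. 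At this stage neither $\sup_t\norm{v}_{L^2}$ nor $\int_0^T\norm{\nabla v}_{L^2}^2\,dt$ is available --- \eqref{2.9} is proved after \eqref{2.8} and uses it --- and the bootstrap hypothesis \eqref{condition2.5} only yields $\int_0^T\norm{\nabla v}_{L^2}^4\,dt\leq 2\norm{\nabla u_0}_{L^2}^2$, hence $\int_0^T\norm{\nabla v}_{L^2}^2\,dt\leq CT^{1/2}\norm{\nabla u_0}_{L^2}$, which grows with $T$. Your argument therefore closes only with a $T$-dependent constant, whereas the Lemma (and the bootstrap in Proposition \ref{prop3.1}) needs $C$ independent of $T$ for all $T\in(0,\infty)$. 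The fix is to distribute the exponents the other way, as the paper does: $\abs{\int(v\cdot\nabla\rho)\Delta\rho}\leq C\norm{v}_{L^6}\norm{\nabla\rho}_{L^3}\norm{\Delta\rho}_{L^2}\leq C\norm{\nabla v}_{L^2}\norm{\nabla\rho}_{L^2}^{1/2}\norm{\Delta\rho}_{L^2}^{3/2}\leq C_\varepsilon\norm{\nabla v}_{L^2}^4\norm{\nabla\rho}_{L^2}^2+\varepsilon\norm{\Delta\rho}_{L^2}^2$, so the Gr\"onwall weight is exactly the quantity controlled uniformly in $T$ by \eqref{condition2.5}; the quadratic term $c_0\rho^{-2}|\nabla\rho|^2$ should be handled in the same spirit, giving $C_\varepsilon\norm{\Delta\rho}_{L^2}^4\norm{\nabla\rho}_{L^2}^2+\varepsilon\norm{\Delta\rho}_{L^2}^2$ rather than only $\norm{\nabla\rho}_{L^4}^2\norm{\Delta\rho}_{L^2}$. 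With this correction the remainder of your plan goes through as in the paper.
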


\begin{proof}
First of all, $\alpha\leq \rho\leq \beta$ is a consequence of the standard maximal principle. Next, multiplying $\rho-(\rho_0)_\Omega$ on both sides of $\eqref{equation1.18}_1$ and integrating over $\Omega$, one has
\begin{equation*}
\left(\norm{\rho-(\rho_0)_\Omega}_{L^2}^2\right)_t+\nu\norm{\nabla\rho}_{L^2}^2\leq 0.
\end{equation*}
Therefore, \eqref{2.7} is an easy consequence of Gr$\mathrm{\ddot{o}}$nwall's inequality and the control
$$\norm{\rho_0-(\rho_0)_\Omega}_{L^2}\leq C\norm{\nabla\rho_0}_{L^2}\leq C\norm{\nabla u_0}_{L^2}.$$

To prove \eqref{2.8}, multiplying $-\Delta\rho$ and integrating over $\Omega$, one has
\begin{equation}\label{eq3.7}
\begin{aligned}
\left(\int \frac{1}{2}|\nabla\rho|^2\right)_t+\int c_0\rho^{-1} |\Delta\rho|^2&=\int (v\cdot\nabla\rho)\Delta\rho+\int c_0\rho^{-2} |\nabla\rho|^2\Delta\rho\\
&:=\sum_{i=1}^2 G_i,
\end{aligned}
\end{equation}
where, using Lemma \ref{Lemma221},
\begin{equation}
\begin{cases}
|G_1|\leq C\norm{v}_{L^6}\norm{\nabla\rho}_{L^3}\norm{\Delta\rho}_{L^2}\leq C_{\varepsilon_1}\norm{\nabla v}^4_{L^2}\norm{\nabla\rho}_{L^2}^2+\varepsilon_1\norm{\Delta\rho}_{L^2}^2,\\
|G_2|\leq C\norm{\nabla\rho}_{L^6}\norm{\nabla\rho}_{L^3}\norm{\Delta\rho}_{L^2}\leq C_{\varepsilon_2}\norm{\Delta\rho}^4_{L^2}\norm{\nabla\rho}_{L^2}^2+\varepsilon_2\norm{\Delta\rho}_{L^2}^2.
\end{cases}
\end{equation}
Thus, we have
\begin{equation}\label{33.8}
\left(\norm{\nabla\rho}_{L^2}^2\right)_t+\nu\norm{\Delta\rho}_{L^2}^2\leq C\left(\norm{\Delta\rho}_{L^2}^4+\norm{\nabla v}^4_{L^2}\right)\norm{\nabla\rho}_{L^2}^2.
\end{equation}
Applying the Gr$\mathrm{\ddot{o}}$nwall's inequality on \eqref{33.8} and using cndition \eqref{condition2.5}, we obtain \eqref{2.8}.

With help of \eqref{2.7}--\eqref{2.8}, we next come to the proof of \eqref{2.9}. Multiplying $v$ on both sides of $\eqref{equation1.18}_2$ and integrating over $\Omega$, one has
\begin{equation}\label{218}
\begin{aligned}
\left(\frac{1}{2}\int \rho|v|^2\right)_t-\int \dive [2\mu(\rho)D(v)]\cdot v &=- \int 2c_0\mu(\rho)\nabla^2\rho^{-1}: \nabla v+ \int c_0 \rho v \cdot \nabla v\cdot \nabla\rho^{-1}\\
&\quad+\int c_0^2 \rho \nabla\rho^{-1} \cdot \nabla v\cdot \nabla\rho^{-1}\\
&\quad+\int_\partial 2c_0\mu(\rho)n\cdot \nabla^2\rho^{-1}\cdot v\\
&:=\sum_{i=1}^4H_i.
\end{aligned}
\end{equation}
For the second term on the left-hand side, using $\Delta v=-\curle\curle v$ and Lemma \ref{Lemma221} and \ref{lemma23}, we have
\begin{equation}\label{219}
\begin{aligned}
-\int \dive [2\mu(\rho)D(v)]\cdot v &=\int \mu(\rho)\curle\curle v\cdot v- \int 2\mu'(\rho)\nabla\rho\cdot D(v)\cdot v\\
&=\int \mu(\rho)|\curle v|^2+\int_\partial \mu(\rho)v\cdot B\cdot v+\int_\partial c_0\mu(\rho)v\cdot B\cdot \nabla\rho^{-1} \\
&\quad+\int \curle v\cdot \left(\nabla\mu(\rho)\times v\right)- \int 2\nabla\mu(\rho)\cdot D(v)\cdot v\\
&\geq \underline\mu\left(\norm{\curle v}_{L^2}^2+\int_\partial v\cdot B\cdot v\right)-C\norm{v}_{H^1}\norm{\nabla\rho}_{H^1}\\
&\quad-C\norm{\nabla\rho}_{L^6}\norm{v}_{L^3}\norm{\nabla v}_{L^2}\\
&\geq \nu\norm{\nabla v}_{L^2}^2-C\left(\norm{\Delta\rho}_{L^2}^2+\norm{\Delta\rho}_{L^2}^4\norm{v}_{L^2}^2\right).
\end{aligned}
\end{equation}
Next, for $H_1$--$H_4$, one has, applying Lemma \ref{Lemma221},
\begin{equation}\label{220}
\begin{cases}
|H_1|&\!\!\!\!\leq C\left(\norm{\Delta\rho}_{L^2}+\norm{\nabla\rho}_{L^6}\norm{\nabla\rho}_{L^3}\right)\norm{\nabla v}_{L^2}\\
&\!\!\!\!\leq C_{\varepsilon_1}\left(\norm{\Delta\rho}_{L^2}^2+\norm{\Delta\rho}_{L^2}^4\norm{\nabla\rho}_{L^2}^2\right)+\varepsilon_1\norm{\nabla v}_{L^2}^2,\\
|H_2|&\!\!\!\!\leq C\norm{\nabla\rho}_{L^6}\norm{v}_{L^3}\norm{\nabla v}_{L^2}\leq C_{\varepsilon_2}\norm{\Delta\rho}_{L^2}^4\norm{v}_{L^2}^2+\varepsilon_2\norm{\nabla v}_{L^2}^2,\\
|H_3|&\!\!\!\!\leq C\norm{\nabla\rho}_{L^6}\norm{\nabla\rho}_{L^3}\norm{\nabla v}_{L^2}\leq C_{\varepsilon_3}\norm{\Delta\rho}_{L^2}^4\norm{\nabla\rho}_{L^2}^2+C_{\varepsilon_3}\norm{\Delta\rho}_{L^2}^2+\varepsilon_3\norm{\nabla v}_{L^2}^2.
\end{cases}
\end{equation}
and, using the fact that 
\begin{equation*}
v\cdot \nabla^2\rho^{-1}\cdot n=-v\cdot \nabla n\cdot \nabla \rho^{-1}\,\,\mathrm{on}\,\,\partial\Omega\times(0,T),
\end{equation*}
\begin{equation}\label{221}
\begin{aligned}
|H_4|&=\abs{\int_\partial 2c_0\mu(\rho)n\cdot \nabla^2\rho^{-1}\cdot v}=\abs{\int_\partial 2c_0\mu(\rho) v\cdot\nabla n\cdot \nabla\rho^{-1}}\\
&=\abs{\int_\partial 2c_0\mu(\rho)(v^\perp\times n)\cdot\nabla n\cdot \nabla\rho^{-1}}\\
&=\abs{\int 2c_0\mu(\rho)\curle v^\perp\cdot \nabla n\cdot \nabla\rho^{-1} -\int 2c_0v^\perp\cdot \curle\left[\mu(\rho)\nabla n\cdot \nabla\rho^{-1}\right]}\\
&=\left|\int 2c_0\mu(\rho)\curle v^\perp\cdot \nabla n\cdot \nabla\rho^{-1}+\int 2c_0\mu(\rho)v^\perp\cdot \curle\left(\nabla n\cdot \nabla\rho^{-1}\right)\right.\\
&\quad-\left.\int 2c_0\nabla\mu(\rho)\times\left(\nabla n\cdot \nabla\rho^{-1}\right)\cdot v^\perp\right|\\
&\leq C\norm{v}_{H^1}\norm{\nabla\rho}_{H^1}+C\norm{\nabla\rho}_{L^3}^2\norm{v}_{L^3}\leq C_{\varepsilon_4}\left(\norm{\Delta\rho}_{L^2}^2+\norm{\nabla\rho}_{L^3}^4\right)+\varepsilon_4\norm{\nabla v}_{L^2}^2.
\end{aligned}
\end{equation}

Combining \eqref{219}--\eqref{221}, we deduce from \eqref{218} that
\begin{equation}\label{3314}
\begin{aligned}
&\left(\norm{\sqrt\rho v}^2_{L^2}\right)_t+\nu\norm{\nabla v}_{L^2}^2\\
&\leq C\left(\norm{\Delta\rho}_{L^2}^4\norm{\sqrt\rho v}_{L^2}^2+\norm{\Delta\rho}_{L^2}^2+\norm{\Delta\rho}_{L^2}^4\norm{\nabla\rho}_{L^2}^2+\norm{\nabla\rho}_{L^3}^4\right).
\end{aligned}
\end{equation}
Using Gr$\mathrm{\ddot{o}}$nwall's inequality, Lemma \ref{Lemma221}, \eqref{1.17}, \eqref{condition2.5} and \eqref{2.8},
\begin{equation*}
\begin{aligned}
\sup_{t\in[0,T]}\norm{v}^2_{L^2}+\int_0^T\left(\norm{v}_{L^3}^4+\norm{\nabla v}_{L^2}^2\right)\,dt&\leq C\left(\norm{\nabla u_0}_{L^2}^2+\norm{v_0}_{L^2}^2\right)\\
&\leq C\norm{\nabla u_0}_{L^2}^2,
\end{aligned}
\end{equation*}
which gives \eqref{2.9}. Thus, we complete the proof of Lemma \ref{Lemma2.2}.
\end{proof}

Next, we prove the higher order estimates for $(\rho, v)$, that is,
\begin{Lemma}\label{lemma33}
Let $(\rho,v,\pi_1)$ be a smooth solution of \eqref{equation1.18}. Suppose that $\norm{\nabla u_0}_{L^2}\leq 1$ and the condition \eqref{condition2.5} holds, then there exist some positive constants $C$ depending only on $\Omega$, $c_0$, $\alpha$ and $\beta$ such that, for all $T\in (0,\infty)$,
\begin{gather}
\sup_{t\in[0,T]}\cP(t)+\int_0^T\left(\cQ(t)+\norm{\pi}_{H^1}^2\right)\,dt\leq C\norm{\nabla u_0}^2_{L^2}.\label{2.12}
\end{gather}
where
\begin{gather*}
\cP(t):=\norm{\nabla v}_{L^2}^2+\norm{\Delta\rho}^2_{L^2}+\norm{\rho_t}_{L^2}^2,\\
\cQ(t):=\norm{v_t}_{L^2}^2+\norm{v}_{H^2}^2+\norm{\nabla\Delta\rho}_{L^2}^2+\norm{\nabla\rho_t}_{L^2}^2.
\end{gather*}
\end{Lemma}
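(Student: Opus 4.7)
The plan is to combine three ingredients: (i) a joint energy estimate for $\nabla v$, $\Delta\rho$ and $\rho_t$ obtained by testing $\eqref{equation1.18}_2$ against $v_t$ and the time-differentiated mass equation against $\rho_t$; (ii) Stokes regularity from Lemma \ref{lemma26}(1) applied to $\eqref{equation1.18}_2$ viewed as a stationary Stokes-type system at each time, so as to recover $\norm{v}_{H^2}$ and $\norm{\pi_1}_{H^1}$ (and then $\norm{\pi}_{H^1}$ via $\pi=\pi_1+c_0(\log\rho)_t$); and (iii) elliptic regularity for the mass equation, read as $-c_0\rho^{-1}\Delta\rho = -\rho_t - v\cdot\nabla\rho - c_0\rho^{-2}\abs{\nabla\rho}^2$, which recovers $\norm{\nabla\Delta\rho}_{L^2}$ in terms of $\norm{\nabla\rho_t}_{L^2}$ and lower-order pieces. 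Throughout, the lower-order bounds of Lemma \ref{Lemma2.2} together with the a priori control \eqref{condition2.5} provide the time-integrable coefficients in front of the bad terms needed for Gr\"onwall closure.

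First I would multiply $\eqref{equation1.18}_2$ by $v_t$ and integrate over $\Omega$. The dissipation produces $\frac{d}{dt}\int 2\mu(\rho)\abs{D(v)}^2$ plus $\int\rho\abs{v_t}^2$ modulo a boundary term controlled by $B\geq 0$ and the Cai--Li identity $v=v^\perp\times n$ as in Lemma \ref{Lemma2.2}. The remaining terms split into the convective contribution $\int\rho v\cdot\nabla v\cdot v_t$ and the five divergence terms on the right of $\eqref{equation1.18}_2$; each is handled by H\"older, Gagliardo--Nirenberg (Lemma \ref{Lemma221}), the pointwise bound $\alpha\leq\rho\leq\beta$, and $\norm{\nabla\rho}_{L^6}\leq 2$, absorbing small quantities into the dissipation. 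Boundary integrals produced by integration by parts of $\int\dive[2\mu(\rho)\nabla^2\rho^{-1}]\cdot v_t$ and siblings are rewritten through $v_t=v_t^\perp\times n$, so that no $H^2$-trace is ever needed. An analogous computation on $\partial_t\eqref{equation1.18}_1$ tested against $\rho_t$ gives a dissipative estimate of the form $\frac{1}{2}\frac{d}{dt}\norm{\rho_t}_{L^2}^2+c_0\int\rho^{-1}\abs{\nabla\rho_t}^2\leq R$, with $R$ trilinear in $v_t$, $\nabla\rho$, $\Delta\rho$, $\rho_t$ controllable by Cauchy--Schwarz and the bounds already in hand.

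To upgrade to the $H^2/H^3$-norms recorded in $\cQ$, I would transfer all time-dependent and convective terms of $\eqref{equation1.18}_2$ to the right-hand side and apply Lemma \ref{lemma26}(1) with the boundary data $\Phi=-B\cdot(v+c_0\nabla\rho^{-1})$, which has the requisite $H^1$-regularity thanks to Lemma \ref{Lemma2.2} and $\Delta\rho\in L^2$. This yields $\norm{v}_{H^2}^2+\norm{\pi_1}_{H^1}^2$ bounded by $\norm{\sqrt\rho v_t}_{L^2}^2+\norm{\nabla v}_{L^2}^4+\norm{\Delta\rho}_{L^2}^2$ plus lower-order data, with a prefactor that is a power of $\norm{\nabla\rho}_{L^6}$ controlled by \eqref{condition2.5}. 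For $\rho$, classical $H^2$- and $H^3$-estimates for the elliptic operator $-c_0\rho^{-1}\Delta$ with the Neumann data $n\cdot\nabla\rho=0$ control $\norm{\Delta\rho}_{L^2}$ and $\norm{\nabla\Delta\rho}_{L^2}$ in terms of $\norm{\rho_t}_{L^2}$, $\norm{\nabla\rho_t}_{L^2}$ and already-bounded quantities. Summing the four resulting differential inequalities, invoking Gr\"onwall and using the smallness of $\norm{\nabla u_0}_{L^2}$ then closes \eqref{2.12}.

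The main obstacle will be the quadratic/cubic nonlinearities in $\nabla\rho$, specifically the term $c_0\rho^{-2}\abs{\nabla\rho}^2$ in the mass equation and $c_0^2\dive(\rho\nabla\rho^{-1}\otimes\nabla\rho^{-1})$ on the right of $\eqref{equation1.18}_2$, together with the non-constant viscosity $\mu(\rho)$, which couples $\rho$ and $v$ at the top-order level. Each such contribution must either acquire a small multiplicative factor (an arbitrary $\varepsilon_i$ to be absorbed, or $\norm{\nabla u_0}_{L^2}$ itself), or come with a time-integrable coefficient from \eqref{condition2.5}, in order not to contaminate the left-hand-side dissipation; the bookkeeping for this absorption, combined with the persistent need to rewrite boundary integrals through $v=v^\perp\times n$ rather than a trace inequality, is where the delicacy of the argument lies.
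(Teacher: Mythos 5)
Your outline reproduces the paper's strategy for the velocity part (testing $\eqref{equation1.18}_2$ against $v_t$, recovering $\norm{v}_{H^2}+\norm{\pi}_{H^1}$ from Lemma \ref{lemma26}(1) with $\Phi=-B\cdot(v+c_0\nabla\rho^{-1})$, and the $\rho_t$ energy estimate), but two points are not resolved as written. First, the boundary term forced by the slip condition \eqref{A'} when you test with $v_t$, namely $\int_\partial c_0\mu(\rho)\,v_t\cdot B\cdot\nabla\rho^{-1}$ (the paper's $K_1$), is not handled by the Cai--Li identity alone: writing $v_t=v_t^\perp\times n$ and converting to an interior integral produces $\curle v_t^\perp$, i.e.\ a full gradient of $v_t$, which is controlled neither by the new dissipation $\norm{\sqrt\rho\,v_t}_{L^2}^2$ nor by a trace inequality (that would need $v_t\in H^1$ in space, which is unavailable). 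The paper's resolution is an additional integration by parts in time: the term is rewritten as $M'(t)$ plus controllable terms, with $M(t)=\int c_0\frac{\mu(\rho)}{\rho^2}\curle v^\perp\cdot B\cdot\nabla\rho$, and then $\sup_t M(t)$ and the contribution of $M'$ after Gr\"onwall are absorbed using the smallness of $\norm{\nabla\rho}_{L^2}$; see \eqref{equ3.21} and \eqref{341}. Without this device your $v_t$ estimate does not close, and your claim that ``no $H^2$-trace is ever needed'' skips exactly this step.

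Second, replacing the paper's parabolic estimate for $\rho$ (testing $\nabla\eqref{equation1.18}_1$ against $-\nabla\Delta\rho$, inequality \eqref{318}) by time-slice elliptic regularity for $-c_0\Delta\rho=\rho\rho_t+\rho v\cdot\nabla\rho+c_0\rho^{-1}\abs{\nabla\rho}^2$ is not innocuous, because the quadratic term is not ``lower order'' at the level of the stated bound. Under \eqref{condition2.5} and \eqref{2.8} one only gets $\normf{\abs{\nabla\rho}^2}_{L^2}\le C\norm{\nabla\rho}_{L^2}^{1/2}\norm{\nabla\rho}_{L^6}^{3/2}\le C\norm{\nabla u_0}_{L^2}^{1/2}$, so the elliptic estimate yields $\sup_t\norm{\Delta\rho}_{L^2}^2\le C\norm{\nabla u_0}_{L^2}$ rather than the claimed $C\norm{\nabla u_0}_{L^2}^2$; the alternative interpolation $\norm{\nabla\rho}_{L^4}^2\le C\norm{\nabla\rho}_{L^2}^{1/2}\norm{\Delta\rho}_{L^2}^{3/2}$ puts a superlinear $\norm{\Delta\rho}_{L^2}^{3/2}$ back on the right and requires a bootstrap/continuity absorption you do not mention. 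The paper avoids this entirely: $\norm{\Delta\rho}_{L^2}^2$ is part of the Gr\"onwall functional $\cP$, its initial value is bounded by $C\norm{\nabla u_0}_{L^2}^2$ via \eqref{1.16}, and all nonlinearities enter with the integrable coefficient $\norm{\Delta\rho}_{L^2}^4+\norm{\nabla v}_{L^2}^4$, which also supplies the dissipation $\norm{\nabla\Delta\rho}_{L^2}^2$ needed to absorb the corresponding term in the $v$-estimate. So either adopt the paper's parabolic estimate for $\Delta\rho$ or supply the missing absorption argument; as proposed, the $\Delta\rho$ part of \eqref{2.12} is not established at the stated strength.
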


\begin{proof}
We first apply $-\nabla\Delta\rho\cdot \nabla$ on both sides of $\eqref{equation1.18}_1$ and, then, integrate over $\Omega$, we have
\begin{equation}\label{2.20}
\begin{aligned}
\left(\int \frac{1}{2}|\Delta\rho|^2\right)_t+\int c_0\rho^{-1}|\nabla\Delta\rho|^2&=\int \nabla\Delta\rho\cdot \nabla v\cdot \nabla\rho+\int v\cdot \nabla^2\rho\cdot\nabla\Delta\rho\\
&\quad+\int \nabla \left[\frac{c_0}{\rho^2}|\nabla\rho|^2\right]\cdot\nabla\Delta\rho+\int \frac{c_0}{\rho^2}\Delta\rho\nabla\rho\cdot\nabla\Delta\rho\\
&:=\sum_{i=1}^4 I_i,
\end{aligned}
\end{equation}
where, applying Lemma \ref{Lemma221},
\begin{equation}\label{2.21}
\begin{cases}
|I_1|&\!\!\!\!\leq C\norm{\nabla v}_{L^3}\norm{\nabla\rho}_{L^6}\norm{\nabla\Delta\rho}_{L^2}\leq C_{\varepsilon_1}\norm{\Delta\rho}_{L^2}^4\norm{\nabla v}_{L^2}^2+\varepsilon_1\left(\norm{\nabla\Delta\rho}_{L^2}^2+\norm{v}_{H^2}^2\right),\\
|I_2|&\!\!\!\!\leq  C\norm{v}_{L^6}\norm{\Delta\rho}_{L^3}\norm{\nabla\Delta\rho}_{L^2}\leq C_{\varepsilon_2}\norm{\nabla v}_{L^2}^4\norm{\Delta\rho}_{L^2}^2+\varepsilon_2\norm{\nabla\Delta\rho}_{L^2}^2,\\
|I_3|&\!\!\!\!\leq C\left(\norm{\nabla\rho}_{L^6}^3+\norm{\nabla\rho}_{L^6}\normf{\nabla^2\rho}_{L^3}\right)\norm{\nabla\Delta\rho}_{L^2}\\
&\!\!\!\!\leq C_{\varepsilon_3}\norm{\Delta\rho}_{L^2}^4\norm{\Delta\rho}_{L^2}^2+\varepsilon_3\norm{\nabla\Delta\rho}_{L^2}^2,\\
|I_4|&\!\!\!\!\leq C\norm{\nabla\rho}_{L^6}\normf{\Delta\rho}_{L^3}\norm{\nabla\Delta\rho}_{L^2}\leq C_{\varepsilon_4}\norm{\Delta\rho}_{L^2}^4\norm{\Delta\rho}_{L^2}^2+\varepsilon_4\norm{\nabla\Delta\rho}_{L^2}^2.
\end{cases}
\end{equation}
Thus, substituting \eqref{2.21} into \eqref{2.20} leads to
\begin{equation}\label{318}
\begin{aligned}
\left(\norm{\Delta\rho}_{L^2}^2\right)_t+\nu \norm{\nabla\Delta\rho}_{L^2}^2&\leq C_\varepsilon\left(\norm{\nabla v}_{L^2}^4+\norm{\Delta\rho}_{L^2}^4\right)\left(\norm{\Delta\rho}_{L^2}^2+\norm{\nabla v}_{L^2}^2\right)+\varepsilon\norm{v}_{H^2}^2
\end{aligned}
\end{equation}

For the higher order estimates of $v$, multiplying $v_t$ on both sides of $\eqref{equation1.18}_2$ and integrating over $\Omega$ lead to
\begin{equation}\label{2.23}
\begin{aligned}
\int \rho|v_t|^2-\int\dive[2\mu(\rho)D(v)]\cdot v_t&=-\int\rho u\cdot \nabla v\cdot v_t+ \int c_0\dive{\left[2\mu(\rho)\nabla^2\rho^{-1}\right]}\cdot v_t\\
&\quad-\int c_0 \dive{\left(\rho v\otimes\nabla\rho^{-1}\right)}\cdot v_t\\
&\quad -\int c_0^2 \dive{\left(\rho \nabla\rho^{-1}\otimes\nabla\rho^{-1}\right)}\cdot v_t\\
&:=\sum_{i=1}^4 J_i.
\end{aligned}
\end{equation}
For the second term on the left-hand side, we have
\begin{align}
-\int\dive[2\mu(\rho)D(v)]\cdot v_t&=\int \mu(\rho)\curle\curle v\cdot v_t-\int 2\nabla\mu(\rho)\cdot D(v)\cdot v_t\notag\\
&=\int_\partial \mu(\rho)v_t\cdot B\cdot v+\int_\partial c_0 \mu(\rho)v_t\cdot B\cdot \nabla\rho^{-1}+\int \mu(\rho)\curle v\cdot \curle v_t\notag\\
&\quad+\int \curle v\cdot\left[\nabla\mu(\rho)\times v_t\right]-\int 2\nabla\mu(\rho)\cdot D(v)\cdot v_t\notag\\
&=\frac{1}{2}\left(\int_\partial \mu(\rho)v\cdot B\cdot v+\int \mu(\rho)|\curle v|^2\right)_t+\int_\partial c_0 \mu(\rho)v_t\cdot B\cdot \nabla\rho^{-1} \notag\\
&\quad-\int_\partial \frac{1}{2}\mu(\rho)_tv\cdot B\cdot v-\int \frac{1}{2}\mu(\rho)_t |\curle v|^2\notag\\
&\quad+\int \curle v\cdot\left[\nabla\mu(\rho)\times v_t\right]-\int 2\nabla\mu(\rho)\cdot D(v)\cdot v_t\notag\\
&:= \frac{1}{2}\left(\int_\partial \mu(\rho)v\cdot B\cdot v+\int \mu(\rho)|\curle v|^2\right)_t +\sum_{i=1}^5K_i.\label{2.25}
\end{align}
However, For $K_1$--$K_5$, we have, using Lemma \ref{Lemma221},
\begin{equation}\label{equ3.21}
\begin{aligned}
K_1&=-\int_\partial c_0 \frac{\mu(\rho)}{\rho^2}(v_t^\perp \times n)\cdot B\cdot \nabla\rho\\
&=\int c_0 \frac{\mu(\rho)}{\rho^2}\curle v_t^\perp\cdot B\cdot \nabla\rho-\int c_0 v_t^\perp\cdot \left[\nabla\frac{\mu(\rho)}{\rho^2}\times (B\cdot \nabla\rho)\right]\\
&\quad-\int c_0\frac{\mu(\rho)}{\rho^2} v_t^\perp\cdot \curle(B\cdot \nabla\rho)\\
&=M'(t)-\int c_0\curle v^\perp\cdot B\cdot  \left[\frac{\mu(\rho)}{\rho^2}\nabla\rho\right]_t-\int c_0 v_t^\perp\cdot \left[\nabla\frac{\mu(\rho)}{\rho^2}\times (B\cdot \nabla\rho)\right]\\
&\quad-\int c_0\frac{\mu(\rho)}{\rho^2} v_t^\perp\cdot \curle(B\cdot \nabla\rho)\\
&\leq M'(t)+C\norm{\nabla v}_{L^2}\left(\norm{\nabla\rho_t}_{L^2}+\norm{\rho_t}_{L^6}\norm{\nabla\rho}_{L^3}\right)\\
&\quad+C\norm{v_t}_{L^2}\left(\norm{\nabla\rho}_{L^6}\norm{\nabla\rho}_{L^3}+\norm{\Delta\rho}_{L^2}\right)\\
&\leq M'(t)+C_{\varepsilon_1}\left(\norm{\nabla\rho}^4_{L^3}\norm{\nabla v}_{L^2}^2+\norm{\nabla v}^2_{L^2}\right)+\varepsilon_1\norm{\nabla\rho_t}_{L^2}^2\\
&\quad+C_{\varepsilon_2}\left(\norm{\Delta\rho}^4_{L^2}\norm{\nabla\rho}^2_{L^2}+\norm{\Delta\rho}^2_{L^2}\right)+\varepsilon_2\norm{v_t}^2_{L^2},
\end{aligned}
\end{equation}
where
\begin{equation*}
M(t):=\int c_0 \frac{\mu(\rho)}{\rho^2}\curle v^\perp\cdot B\cdot \nabla\rho,
\end{equation*}
and
\begin{equation}\label{equ3.22}
\begin{aligned}
K_2&=-\int_\partial \frac{1}{2}\mu(\rho)_tv\cdot B\cdot v=\int_\partial \frac{1}{2}\mu(\rho)_t (n\times v^\perp)\cdot B\cdot v\\
&=\int \frac{1}{2}\mu(\rho)_t\curle v^\perp \cdot B\cdot v-\int \frac{1}{2} v^\perp \cdot\left[ \nabla\mu(\rho)_t\times (B\cdot v)\right]-\int \frac{1}{2}\mu(\rho)_t v^\perp \cdot \curle(B\cdot v)\\
&\leq C\norm{\rho_t}_{L^6}\norm{v}_{L^3}\norm{\nabla v}_{L^2}+C\norm{v}_{L^6}\norm{v}_{L^3}\norm{\nabla\rho_t}_{L^2}+C\norm{v}_{L^3}\norm{v}_{L^6}\norm{\nabla\rho}_{L^3}\norm{\rho_t}_{L^6}\\
&\leq C_{\varepsilon_3}\left(\norm{v}_{L^3}^4\norm{\nabla v}_{L^2}^2+\norm{\nabla\rho}_{L^3}^4\norm{\nabla v}_{L^2}^2+\norm{\nabla v}_{L^2}^2\right)+\varepsilon_3\norm{\nabla\rho_t}_{L^2}^2
\end{aligned}
\end{equation}
and 
\begin{equation}\label{2.28}
\begin{cases}
|K_3|\leq C\norm{\rho_t}_{L^6}\norm{\nabla v}_{L^3}\norm{\nabla v}_{L^2}\leq C_{\varepsilon_4}\norm{\nabla v}^4_{L^2}\norm{\nabla v}_{L^2}^2+\varepsilon_4\left(\norm{v}_{H^2}^2+\norm{\nabla\rho_t}_{L^2}^2\right),\\
|K_4|\leq C\norm{\nabla v}_{L^3}\norm{\nabla\rho}_{L^6}\norm{v_t}_{L^2}\leq C_{\varepsilon_5}\norm{\Delta\rho}^4_{L^2}\norm{\nabla v}_{L^2}^2+\varepsilon_5\left(\norm{v}_{H^2}^2+\norm{v_t}_{L^2}^2\right),\\
|K_5|\leq C\norm{\nabla v}_{L^3}\norm{\nabla\rho}_{L^6}\norm{v_t}_{L^2}\leq C_{\varepsilon_6}\norm{\Delta\rho}^4_{L^2}\norm{\nabla v}_{L^2}^2+\varepsilon_6\left(\norm{v}_{H^2}^2+\norm{v_t}_{L^2}^2\right).
\end{cases}
\end{equation}
Combining \eqref{2.25}--\eqref{2.28}, we have
\begin{equation}\label{228}
\begin{aligned}
-\int\dive[2\mu(\rho)D(v)]\cdot v_t &\geq \frac{1}{2}\left(\int_\partial \mu(\rho)v\cdot B\cdot v+\int \mu(\rho)|\curle v|^2\right)_t+M'(t)\\
&\quad-C_\varepsilon\left(\norm{\nabla\rho}_{L^3}^4+\norm{\Delta\rho}_{L^2}^4+\norm{v}_{L^3}^4+\norm{\nabla v}_{L^2}^4\right)\norm{\nabla v}_{L^2}^2\\
&\quad-C_\varepsilon\norm{\nabla v}^2_{L^2}-C_{\varepsilon}\left(\norm{\Delta\rho}^4_{L^2}\norm{\nabla\rho}^2_{L^2}+\norm{\Delta\rho}^2_{L^2}\right)\\
&\quad-\varepsilon\left(\norm{v}_{H^2}^2+\norm{v_t}_{L^2}^2\right).
\end{aligned}
\end{equation}

Next, we turn to estimate $J_1$--$J_4$ and apply Lemma \ref{Lemma221}, that is, 
\begin{equation}\label{2.29}
\begin{cases}
|J_1|\leq C\left(\norm{\nabla\rho}_{L^6}+\norm{v}_{L^6}\right)\norm{\nabla v}_{L^3}\norm{v_t}_{L^2}\\
\quad\quad\!\leq C_{\varepsilon_1}\left(\norm{\Delta\rho}_{L^2}^4+\norm{\nabla v}_{L^2}^4\right)\norm{\nabla v}_{L^2}^2+\varepsilon_1\left(\norm{v}_{H^2}^2+\norm{v_t}_{L^2}^2\right),\\
|J_2|\leq C\left(\norm{\nabla\rho}_{L^6}^3+\norm{\nabla\rho}_{L^6}\norm{\Delta\rho}_{L^3}+\norm{\nabla\Delta\rho}_{L^2}\right)\norm{v_t}_{L^2}\\
\quad\quad\!\leq C_{\varepsilon_2}\left(\norm{\Delta\rho}_{L^2}^4\norm{\Delta\rho}_{L^2}^2+\norm{\nabla\Delta\rho}_{L^2}^2\right)+\varepsilon_2\norm{v_t}_{L^2}^2,\\
|J_3|\leq C\left(\norm{\nabla\rho}_{L^6}^2\norm{v}_{L^6}+\norm{\nabla\rho}_{L^6}\norm{\nabla v}_{L^3}+\norm{\Delta\rho}_{L^3}\norm{v}_{L^6}\right)\norm{v_t}_{L^2}\\
\quad\quad\!\leq C_{\varepsilon_3}\left(\norm{\Delta\rho}_{L^2}^4\norm{\nabla v}_{L^2}^2+\norm{\nabla v}_{L^2}^4\norm{\Delta\rho}_{L^2}^2\right)\\
\quad\quad\quad\!+\varepsilon_3\left(\norm{\Delta v}_{L^2}^2+\norm{v_t}_{L^2}^2+\norm{\nabla\Delta\rho}_{L^2}^2\right),\\
|J_4|\leq C\left(\norm{\nabla\rho}_{L^6}^3+\norm{\nabla\rho}_{L^6}\norm{\Delta\rho}_{L^3}\right)\norm{v_t}_{L^2}\\
\quad\quad\!\leq C_{\varepsilon_4}\left(\norm{\Delta\rho}_{L^2}^4\norm{\Delta\rho}_{L^2}^2+\norm{\nabla\Delta\rho}_{L^2}^2\right)+\varepsilon_4\norm{v_t}_{L^2}^2.\\
\end{cases}
\end{equation}
Now, substituting \eqref{228} and \eqref{2.29} into \eqref{2.23}, one  can deduce that
\begin{equation}\label{2.30}
\begin{aligned}
&\left(\int_\partial \mu(\rho)v\cdot B\cdot v+\int \mu(\rho)|\curle v|^2\right)_t +\nu\norm{v_t}_{L^2}^2+M'(t)\\
&\leq C_\varepsilon\left(\norm{\nabla\rho}_{L^3}^4+\norm{\Delta\rho}_{L^2}^4+\norm{v}_{L^3}^4+\norm{\nabla v}_{L^2}^4+1\right)\norm{\nabla v}_{L^2}^2\\
&\quad+C_{\varepsilon}\left(\norm{\Delta\rho}^4_{L^2}\norm{\Delta\rho}^2_{L^2}+\norm{\nabla\Delta\rho}^2_{L^2}\right)+\varepsilon\left(\norm{v}_{H^2}^2+\norm{\nabla\rho_t}_{L^2}^2\right).
\end{aligned}
\end{equation}
For simplicity, we rewrite \eqref{2.30} as 
\begin{equation}\label{327}
\begin{aligned}
\left(\norm{\nabla v}_{L^2}^2\right)_t +\nu\norm{v_t}_{L^2}^2+M'(t)&\leq C_\varepsilon\left(\norm{\nabla\rho}_{L^3}^4+\norm{\Delta\rho}_{L^2}^4+\norm{v}_{L^3}^4+\norm{\nabla v}_{L^2}^4+1\right)\norm{\nabla v}_{L^2}^2\\
&\quad+C_{\varepsilon}\left(\norm{\Delta\rho}^4_{L^2}\norm{\Delta\rho}^2_{L^2}+\norm{\nabla\Delta\rho}^2_{L^2}\right)+\varepsilon\norm{v}_{H^2}^2,\\
&\leq C_\varepsilon\left(\norm{\Delta\rho}_{L^2}^4+\norm{\nabla v}_{L^2}^4\right)\left(\norm{\nabla v}_{L^2}^2+\norm{\Delta\rho}^2_{L^2}\right)\\
&\quad+C_{\varepsilon}\left(\norm{\nabla v}^2_{L^2}+\norm{\nabla\Delta\rho}^2_{L^2}\right)+\varepsilon\left(\norm{v}_{H^2}^2+\norm{\nabla\rho_t}_{L^2}^2\right),\\
\end{aligned}
\end{equation}
since, from the positivity of $B$ and Lemma \ref{lemma22}
$$\int_\partial \mu(\rho)v\cdot B\cdot v\geq 0,\,\,\,\,\int \mu(\rho)|\curle v|^2\sim \norm{\nabla v}_{L^2}^2,$$
and they do not influent the results after applying the Gr$\mathrm{\ddot{o}}$nwall's inequality for \eqref{2.30}.

We still need to estimate $\norm{v}_{H^2}$. To get this, we convert $\eqref{equation1.18}_2$ into the form
\begin{equation}\label{328}
-\dive[2\mu(\rho)D(v)]+\nabla \pi=F,
\end{equation}
where
\begin{equation}\label{F}
\begin{aligned}
F&:=-\rho v_t -\rho u\cdot\nabla v +c_0\dive{\left[2\mu(\rho)\nabla^2\rho^{-1}\right]}- c_0 \dive{\left(\rho v\otimes\nabla\rho^{-1}\right)}\\
&\,\,\quad -c_0^2 \dive{\left(\rho \nabla\rho^{-1}\otimes\nabla\rho^{-1}\right)}+c_0\nabla(\log\rho)_t.
\end{aligned}
\end{equation}
In order to use Lemma \ref{lemma26}, from the embedding $L^2\hookrightarrow H^{-1}$, one should estimate $\norm{F}_{L^2}$, that is,
\begin{equation}\label{329}
\begin{aligned}
\norm{F}^2_{L^2}&\leq C\left(\norm{v_t}^2_{L^2}+\norm{\nabla\rho_t}^2_{L^2}\right)+C\left[\left(\norm{\Delta\rho}_{L^2}^4+\norm{\nabla v}_{L^2}^4\right)\norm{\Delta\rho}_{L^2}^2+\norm{\nabla\Delta\rho}_{L^2}^2\right]\\
&\quad+C_{\varepsilon_1}\norm{\Delta\rho}_{L^2}^4\norm{\nabla v}_{L^2}^2+\varepsilon_1\norm{v}_{H^2}^2,
\end{aligned}
\end{equation}
where we have used 
$$\norm{\nabla(\log\rho)_t}_{L^2}\leq C\left(\norm{\nabla\rho_t}_{L^2}+\norm{\rho_t}_{L^3}\norm{\nabla\rho}_{L^6}\right)\leq C\norm{\nabla\rho_t}_{L^2}$$
from Lemma \ref{Lemma221} and condition \eqref{condition2.5}.

On the other hand, in this case, $\Phi:=-B\cdot(v+c_0\nabla\rho^{-1})$, where $\Phi$ as in Lemma \ref{lemma26}. Hence, applying Poincar\'e's inequality leads to
\begin{equation}\label{330}
\begin{aligned}
\norm{\Phi}^2_{H^1}&\leq C\left(\norm{v}_{H^1}^2+\norm{\nabla\rho}_{H^1}^2+\norm{\nabla\rho}_{L^3}^2\norm{\nabla\rho}_{L^6}^2\right)\\
&\leq C\left(\norm{\nabla v}_{L^2}^2+\norm{\Delta\rho}_{L^2}^2+\norm{\Delta\rho}_{L^2}^4\norm{\nabla\rho}_{L^2}^2\right).
\end{aligned}
\end{equation}
Combining \eqref{329}--\eqref{330} and using Lemma \ref{lemma26}, condition \eqref{condition2.5} and Poincar\'e's inequality, we deduce from \eqref{328} that
\begin{equation*}
\begin{aligned}
\norm{v}_{H^2}^2+\norm{\pi}^2_{H^1}&\leq C\left[\norm{\nabla\rho}^4_{L^6}\norm{\nabla v}^2_{L^2}+\left(1+\norm{\nabla\rho}^4_{L^6}\right)\left(\norm{F}^2_{L^2}+\norm{\Phi}^2_{H^1}\right)\right]\\
&\leq C\left(\normf{F}^2_{L^2}+\norm{\Phi}^2_{H^1}+\norm{\nabla v}^2_{L^2}\right)\\
&\leq C\left(\norm{v_t}^2_{L^2}+\norm{\nabla\rho_t}^2_{L^2}\right)+C\left(\norm{\Delta\rho}_{L^2}^4+\norm{\nabla v}_{L^2}^4\right)\norm{\Delta\rho}_{L^2}^2+C\norm{\nabla\Delta\rho}_{L^2}^2\\
&\quad+C\norm{\nabla v}_{L^2}^2+C_{\varepsilon}\norm{\Delta\rho}_{L^2}^4\norm{\nabla v}_{L^2}^2+\varepsilon\norm{v}_{H^2}^2,
\end{aligned}
\end{equation*}
which gives 
\begin{equation}\label{331}
\begin{aligned}
\norm{v}_{H^2}^2+\norm{\pi}^2_{H^1}&\leq C\left(\norm{v_t}^2_{L^2}+\norm{\nabla\rho_t}^2_{L^2}\right)+C\left(\norm{\Delta\rho}_{L^2}^4+\norm{\nabla v}_{L^2}^4\right)\left(\norm{\Delta\rho}_{L^2}^2+\norm{\nabla v}_{L^2}^2\right)\\
&\quad+C\left(\norm{\nabla\Delta\rho}_{L^2}^2+\norm{\nabla v}_{L^2}^2\right)
\end{aligned}
\end{equation}
Combining \eqref{327} and \eqref{331}, one has
\begin{equation}\label{332}
\begin{aligned}
\left(\norm{\nabla v}_{L^2}^2\right)_t +\nu\norm{v_t}_{L^2}+\nu\norm{v}_{H^2}^2+M'(t)&\leq C_\varepsilon\left(\norm{\Delta\rho}_{L^2}^4+\norm{\nabla v}_{L^2}^4\right)\left(\norm{\nabla v}_{L^2}^2+\norm{\Delta\rho}^2_{L^2}\right)\\
&\quad+C_\varepsilon\left(\norm{\nabla\Delta\rho}^2_{L^2}+\norm{\nabla v}^2_{L^2}\right)+\varepsilon\norm{\nabla\rho_t}^2_{L^2}.
\end{aligned}
\end{equation}

At last, we come to estimate $\nabla\rho_t$. Applying $\rho_t\partial_t$ on both sides of $\eqref{equation1.18}_1$ and integrating over $\Omega$ yield that
\begin{align}
\left(\int\frac{1}{2}\abs{\rho_t}^2\right)_t +\int c_0\rho^{-1}\abs{\nabla\rho_t}^2 &=-\int (v_t\cdot\nabla\rho)\rho_t+ \int 2c_0 \rho^{-3}\abs{\rho_t}^2\abs{\nabla\rho}^2\notag\\
&\quad-\int  2c_0\rho^{-1}\left(\nabla\rho\cdot\nabla\rho_t\right)\rho_t-\int c_0\rho^{-1}\abs{\rho_t}^2\Delta\rho\notag\\
&:= \sum_{i=1}^4L_i.\label{33.5}
\end{align}
It follow from Lemma \ref{Lemma221} that
\begin{equation}\label{33.6}
\begin{cases}
|L_1|\leq \norm{v_t}_{L^2}\norm{\nabla\rho}_{L^6}\norm{\rho_t}_{L^3}\leq C_{\varepsilon_1}\norm{\Delta\rho}_{L^2}^4\norm{\rho_t}_{L^2}^2+\varepsilon_1\left(\norm{v_t}_{L^2}^2+\norm{\nabla\rho_t}_{L^2}^2\right),\\
|L_2|\leq \norm{\nabla\rho}_{L^2}\norm{\nabla\rho}_{L^6}\norm{\rho_t}_{L^3}\leq C_{\varepsilon_2}\norm{\Delta\rho}_{L^2}^4\norm{\rho_t}_{L^2}^2+\varepsilon_2\left(\norm{\nabla\rho}_{L^2}^2+\norm{\nabla\rho_t}_{L^2}^2\right),\\
|L_3|\leq \norm{\nabla\rho_t}_{L^2}\norm{\nabla\rho}_{L^6}\norm{\rho_t}_{L^3}\leq C_{\varepsilon_3}\norm{\Delta\rho}_{L^2}^4\norm{\rho_t}_{L^2}^2+\varepsilon_3\norm{\nabla\rho_t}_{L^2}^2,\\
|L_4|\leq \norm{\Delta\rho}_{L^2}\norm{\rho_t}_{L^6}\norm{\rho_t}_{L^3}\leq C_{\varepsilon_4}\norm{\Delta\rho}_{L^2}^4\norm{\rho_t}_{L^2}^2+\varepsilon_4\norm{\nabla\rho_t}_{L^2}^2.
\end{cases}
\end{equation}
Combining \eqref{33.5} and \eqref{33.6} leads to
\begin{equation}
\left(\norm{\rho_t}_{L^2}^2\right)_t+\nu\norm{\nabla\rho_t}_{L^2}^2\leq C_\varepsilon\norm{\Delta\rho}_{L^2}^4\norm{\rho_t}_{L^2}^2+\varepsilon\left(\norm{v_t}_{L^2}^2+\norm{\nabla\rho}_{L^2}^2\right).
\end{equation}
This, alonging with \eqref{332}, yields that
\begin{equation}\label{3336}
\begin{aligned}
&\left(\norm{\nabla v}_{L^2}^2+\norm{\rho_t}_{L^2}^2\right)_t +\nu\norm{v_t}_{L^2}^2+\nu\norm{v}_{H^2}^2+\nu\norm{\nabla\rho_t}_{L^2}^2+M'(t)\\
&\leq C\left(\norm{\Delta\rho}_{L^2}^4+\norm{\nabla v}_{L^2}^4\right)\cP(t)+C\left(\norm{\nabla\Delta\rho}^2_{L^2}+\norm{\nabla v}^2_{L^2}\right).
\end{aligned}
\end{equation}

On the other hand, combining \eqref{318} and \eqref{331} leads to
\begin{equation}
\begin{aligned}
\left(\norm{\Delta\rho}_{L^2}^2\right)_t+\nu \norm{\nabla\Delta\rho}_{L^2}^2&\leq C_\varepsilon\left(\norm{\nabla v}_{L^2}^4+\norm{\Delta\rho}_{L^2}^4\right)\left(\norm{\Delta\rho}_{L^2}^2+\norm{\nabla v}_{L^2}^2\right)\\
&\quad+\varepsilon\left(\norm{\nabla v}_{L^2}^2+\norm{v_t}_{L^2}^2+\norm{\nabla \rho_t}_{L^2}^2\right),
\end{aligned}
\end{equation}
that is,
\begin{equation}\label{333}
\begin{aligned}
\frac{\nu}{2} \norm{\nabla\Delta\rho}_{L^2}^2&\leq -\frac{\nu}{2} \norm{\nabla\Delta\rho}_{L^2}^2-\left(\norm{\Delta\rho}_{L^2}^2\right)_t\\
&\quad+ C_\varepsilon\left(\norm{\nabla v}_{L^2}^4+\norm{\Delta\rho}_{L^2}^4\right)\left(\norm{\Delta\rho}_{L^2}^2+\norm{\nabla v}_{L^2}^2\right)\\
&\quad+\varepsilon\left(\norm{\nabla v}_{L^2}^2+\norm{v_t}_{L^2}^2+\norm{\nabla \rho_t}_{L^2}^2\right).
\end{aligned}
\end{equation}
Thus, substituting \eqref{333} into \eqref{3336} and choosing $\varepsilon$ small enough, we obtain
\begin{equation*}
\begin{aligned}
&\left(\frac{2C}{\nu}\norm{\Delta\rho}_{L^2}^2+\norm{\nabla v}_{L^2}^2+\norm{\rho_t}_{L^2}^2\right)_t +\frac{\nu}{2}\left(\frac{2C}{\nu}\norm{\nabla\Delta\rho}_{L^2}^2+\norm{v_t}_{L^2}^2+\norm{\nabla\rho_t}_{L^2}^2\right)+M'(t)\\
&\leq C\left(\norm{\nabla v}_{L^2}^4+\norm{\Delta\rho}_{L^2}^4\right)\cP(t)+C\norm{\nabla v}_{L^2}^2,
\end{aligned}
\end{equation*}
or, equivalent to say, using the definition of $\cP(t),\,\cQ(t)$,
\begin{equation*}
\begin{aligned}
\cP'(t)+\nu\cQ(t)+M'(t)\leq C\left(\norm{\nabla v}_{L^2}^4+\norm{\Delta\rho}_{L^2}^4\right)\cP(t)+C\norm{\nabla v}_{L^2}^2,
\end{aligned}
\end{equation*}

Then, alonging with Lemma \ref{Lemma2.2} and using Gr$\mathrm{\ddot{o}}$nwall's inequality gives
\begin{equation}\label{334}
\sup_{t\in[0,T]}\cP(t)+\int_0^T\cQ(t),dt\leq C\norm{\nabla u_0}_{L^2}^2,
\end{equation}
where we have used the control
\begin{equation*}
\begin{aligned}
\norm{\rho_{t,0}}_{L^2}^2&\leq C\left(\norm{\nabla\rho_0}_{L^3}^2\norm{v_0}_{L^6}^2+\norm{\nabla\rho_0}_{L^3}^2\norm{\nabla\rho_0}_{L^6}^2+\norm{\Delta\rho_0}_{L^2}^2\right)\\
&\leq C\left(\norm{\Delta\rho_0}_{L^2}^2\norm{\nabla v_0}_{L^2}^2+\norm{\Delta\rho_0}_{L^2}^4+\norm{\Delta\rho_0}_{L^2}^2\right)\\
&\leq C\left(\norm{\nabla u_0}_{L^2}^4+\norm{\nabla u_0}_{L^2}^2\right)\leq C\norm{\nabla u_0}_{L^2}^2,
\end{aligned}
\end{equation*}
and the follwoing estimates
\begin{equation}\label{341}
\begin{aligned}
\sup_{t\in[0,T]}M(t)&\leq \varepsilon \sup_{t\in[0,T]}\norm{\nabla v}_{L^2}^2+C_\varepsilon\sup_{t\in[0,T]}\norm{\nabla\rho}_{L^2}^2\\
&\leq \varepsilon \sup_{t\in[0,T]}\cP(t)+C_\varepsilon\norm{\nabla u_0}_{L^2}^2,
\end{aligned}
\end{equation}
and 
\begin{equation}
\begin{aligned}
\abs{e^{\int_0^T h(t)\,dt}\int_0^T M'(t)e^{-\int_0^t h(s)\,ds}\,dt}&\leq \varepsilon\sup_{t\in[0,T]}\norm{\nabla v}_{L^2}^2+C_\varepsilon\sup_{t\in[0,T]}\norm{\nabla \rho}_{L^2}^2\\
&\leq \varepsilon \sup_{t\in[0,T]}\cP(t)+C_\varepsilon\norm{\nabla u_0}_{L^2}^2
\end{aligned}
\end{equation}
where $h(t)$ is an integrable function on $[0,\infty)$.

Finally, plugging \eqref{334} into \eqref{331}, we have
\begin{equation*}
\int_0^T\norm{\pi}_{H^1}^2\,dt\leq C\norm{\nabla u_0}_{L^2}^2,
\end{equation*}
which, together with \eqref{334}, completes the proof of \eqref{2.12}.
\end{proof}

Now, we turn back to prove Proposition \ref{prop3.1}.
\begin{proof}[Proof of Proposition \ref{prop3.1}]
Since, from Lemma \ref{lemma33} and the Sobolev embedding theorem,
\begin{equation*}
\sup_{t\in[0,T]}\norm{\nabla\rho}_{L^6}\leq C_1\sup_{t\in[0,T]}\norm{\Delta\rho}_{L^2}\leq C_1C\norm{\nabla u_0}_{L^2},
\end{equation*}
where $C$ as in Lemma \ref{lemma33} and $C_1$ is Sobolev embedding constant. Thus, if we choose
\begin{equation}\label{335}
\norm{\nabla u_0}_{L^2}\leq \delta_1:= (C_1C)^{-1},
\end{equation}
we can derive the first part of \eqref{32}. 

For the rest of \eqref{32}, using Lemma \ref{lemma33} again leads to
\begin{equation*}
\begin{aligned}
\int_0^T\left(\norm{\nabla v}_{L^2}^4+\norm{\Delta\rho}_{L^2}^4\right)\,dt&\leq \left(\sup_{t\in[0,T]}\norm{\Delta\rho}_{L^2}^2+\sup_{t\in[0,T]}\norm{\nabla v}_{L^2}^2\right)\int_0^T \left(\norm{\Delta\rho}^2_{L^2}+\norm{\nabla v}^2_{L^2}\right)\,dt\\
&\leq \lambda^{-1} C^2\norm{\nabla u_0}_{L^2}^4\\
&\leq \norm{\nabla u_0}_{L^2}^2
\end{aligned}
\end{equation*}
provided 
\begin{equation}\label{336}
\norm{\nabla u_0}_{L^2}\leq \delta_2:= \lambda^{1/2} C^{-1},
\end{equation}
where $C$ as in Lemma \ref{lemma33}.

It follows from \eqref{335} and \eqref{336} that one should choose $\delta:=\min\{1,\delta_1,\delta_2\}$. Of course, such $\delta$ depends only on $\Omega$, $c_0$, $\alpha$ and $\beta$ and, therefore, we estabished \eqref{32}.
\end{proof}

\subsubsection{Case \eqref{equation1.7}}\label{subsection3.2}
Similar with the preceding subsection, we are going to prove the following proposition. 
\begin{Proposition}\label{prop3.4}
There exists a positive constant $\hat\delta$ depending on $\Omega$, $c_0$, $\alpha$ and $\beta$ such that, if $\norm{\nabla u}_{L^2}\leq \hat\delta$ and 
\begin{equation}\label{3.39}
\sup_{t\in[0,T]}\norm{\nabla\rho}_{L^6}\leq 2,\quad\int_0^T\left(\norm{\nabla u}_{L^2}^4+\norm{\Delta\rho}_{L^2}^4\right)\,dt\leq 2\norm{\nabla u_0}^2_{L^2},
\end{equation}
then, one has
\begin{equation}\label{3.40}
\sup_{t\in[0,T]}\norm{\nabla\rho}_{L^6}\leq 1,\quad\int_0^T\left(\norm{\nabla u}_{L^2}^4+\norm{\Delta\rho}_{L^2}^4\right)\,dt\leq \norm{\nabla u_0}^2_{L^2}.
\end{equation}
\end{Proposition}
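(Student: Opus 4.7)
The plan is to mirror the argument of Proposition~\ref{prop3.1}, replacing the Case~(A) substitution $u=v+c_0\nabla\rho^{-1}$ by the decomposition $u=w+Q$ from \eqref{1.21}, where $Q=\cB[c_0\Delta\rho^{-1}]$ is the Bogovski\v i extension and $w=u-Q$ satisfies $\dive w=0$ together with $w|_{\partial\Omega}=0$. The benefit is twofold: (i) every function I pair against the momentum equation below (either $u$ or $u_t$) vanishes on $\partial\Omega$, so the delicate boundary integrals that dominated Lemma~\ref{lemma33} simply do not appear; (ii) the bounds \eqref{1.23} allow me to exchange every occurrence of $Q$, $\nabla Q$, $Q_t$ for $\nabla\rho$, $\Delta\rho$, $\nabla\rho_t$, which are already dissipated by the parabolic structure of $\eqref{equation1.1}_1$.

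For the lower-order estimates I follow Lemma~\ref{Lemma2.2} verbatim: the maximum principle gives $\alpha\leq\rho\leq\beta$, and testing $\eqref{equation1.1}_1$ against $\rho-(\rho_0)_\Omega$ and then against $-\Delta\rho$ (using $n\cdot\nabla\rho=0$) yields the analogues of \eqref{2.7} and \eqref{2.8} under the bootstrap \eqref{3.39}. The basic velocity estimate is obtained by testing $\eqref{equation1.1}_2$ against $u$: since $u|_{\partial\Omega}=0$ the viscous boundary integral drops, and the pressure contributes $-c_0\int\pi\,\Delta\rho^{-1}$, which is bounded by $\|\pi\|_{L^2}\|\Delta\rho\|_{L^2}$ and absorbed after controlling $\|\pi\|_{L^2}$ via the Stokes identity applied to $w$.

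The higher-order step tests the momentum equation against $u_t$. Because $u|_{\partial\Omega}=0$ for all $t$ forces $u_t|_{\partial\Omega}=0$, the integration by parts of $-\dive[2\mu(\rho)D(u)]\cdot u_t$ produces no boundary integral, so the terms $K_1,K_2$ that required the Cai--Li boundary argument in Lemma~\ref{lemma33} are entirely absent; the remaining volume contributions $J_1$--$J_4$ are handled by Gagliardo--Nirenberg exactly as in \eqref{2.29}, under the bootstrap \eqref{3.39}. To close the $H^2$ bound on $u$ I apply Lemma~\ref{lemma26}(2) to $w$, which solves $-\dive[2\mu(\rho)D(w)]+\nabla\pi=F-\dive[2\mu(\rho)D(Q)]$, $\dive w=0$, $w|_{\partial\Omega}=0$, with $F=-\rho u_t-\rho u\cdot\nabla u$; all $Q$-contributions on the right-hand side are converted by \eqref{1.23} into quantities involving $\|\nabla\Delta\rho\|_{L^2}$, $\|\nabla\rho_t\|_{L^2}$ and lower-order $\rho$-norms. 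The $\rho_t$-estimate is obtained as in \eqref{33.5}--\eqref{33.6} by testing $\partial_t\eqref{equation1.1}_1$ against $\rho_t$.

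The principal obstacle I expect is the coupling introduced by $\dive u=c_0\Delta\rho^{-1}\neq 0$: the Stokes estimate for $w$ brings $\|\nabla\Delta\rho\|_{L^2}$ into the right-hand side of the velocity inequality, and $\|Q_t\|_{L^2}$ feeds $\|\nabla\rho_t\|_{L^2}$ into it as well through \eqref{1.23}. To absorb these dissipative $\rho$-terms I must sum the velocity and density inequalities with carefully chosen weights, repeating the argument used to pass from \eqref{3336} and \eqref{333} to the combined inequality $\cP'(t)+\nu\cQ(t)\leq C(\|\nabla u\|_{L^2}^4+\|\Delta\rho\|_{L^2}^4)\cP(t)+C\|\nabla u\|_{L^2}^2$ (with $\cP,\cQ$ the obvious Case~(B) analogues of the functionals in Lemma~\ref{lemma33}, i.e.\ with $v$ replaced by $u$). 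Once this differential inequality is in place, Gr\"onwall together with \eqref{3.39} yields $\sup_{[0,T]}\cP(t)+\int_0^T\cQ(t)\,dt\leq C\|\nabla u_0\|_{L^2}^2$, and the closure \eqref{3.40} follows by choosing $\hat\delta$ as the minimum of $1$, the Sobolev constant $(C_1C)^{-1}$ and $\lambda^{1/2}C^{-1}$, exactly as at the end of the proof of Proposition~\ref{prop3.1}.
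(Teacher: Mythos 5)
Your overall strategy (use the Bogovski\v i decomposition $u=w+Q$ to avoid boundary integrals, derive the Case~(B) analogue of the energy functionals, then close exactly as in Proposition~\ref{prop3.1}) is the same as the paper's (Lemma~\ref{lemma3.5} plus Proposition~\ref{prop3.4}), but the step where you obtain the $H^2$ bound on the velocity has a genuine gap. You propose to apply Lemma~\ref{lemma26}(2) to $w$, moving $\dive[2\mu(\rho)D(Q)]$ to the right-hand side and claiming that ``all $Q$-contributions are converted by \eqref{1.23}.'' That right-hand side contains second derivatives of $Q$, so you need $\norm{Q}_{H^2}\lesssim \norm{\nabla\Delta\rho}_{L^2}+\dots$; but \eqref{1.23} (equivalently Lemma~\ref{llemma2.2}) only controls $Q=\cB[c_0\Delta\rho^{-1}]$ in $W^{1,p}$ and $Q_t$ in $L^p$. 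The gain of one derivative for the Bogovski\v i operator requires the datum to lie in $W^{1,p}_0$, and $c_0\Delta\rho^{-1}$ has no vanishing trace, so no $H^2$ bound for $Q$ is available and the Stokes estimate for $w$ cannot be closed as written. The paper avoids this entirely: for the $H^2$ step it abandons the $(w,Q)$ splitting and applies Lemma~\ref{lemma26}(2) to $v=u-c_0\nabla\rho^{-1}$, which is divergence free with non-homogeneous Dirichlet data $\Phi=-c_0\nabla\rho^{-1}$; then $\norm{\Phi}_{H^2}$ is controlled by $\norm{\nabla\Delta\rho}_{L^2}$ plus lower-order terms (all dissipated), and the equivalences \eqref{342} convert the result into the bound \eqref{352} for $\Delta u$ and $\pi$.

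A second, smaller issue: you test the momentum equation with $u$ and $u_t$, whereas the paper tests with $w$ and $w_t$ (see \eqref{346} and \eqref{349}), precisely so that the pressure term vanishes ($\dive w_t=0$ and $w_t$ has zero trace). With your choice, $\int\nabla\pi\cdot u_t=-c_0\int\pi\,\Delta(\rho^{-1})_t$ does not vanish and is never mentioned in your proposal; it cannot be bounded directly since it contains $\Delta\rho_t$, and handling it requires a further integration by parts, the observation that $n\cdot\nabla(\rho^{-1})_t=0$ on $\partial\Omega$, and an $\varepsilon$-absorption through the Stokes bound for $\norm{\nabla\pi}_{L^2}$ (which itself carries $\norm{u_t}_{L^2}$). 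This can probably be patched by the same weighting tricks used in the paper, but as written it is an unaddressed hole, and the analogous (acknowledged) pressure term in your lower-order estimate makes that step needlessly circular compared with simply testing against $w$.
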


One should notice that the norms of $v$ and $u$ are equivalent in the following sense under condition \eqref{3.39},
\begin{equation}\label{342}
\begin{aligned}
\norm{v}_{L^p}+\norm{\nabla\rho}_{L^p}&\sim \norm{u}_{L^p}+\norm{\nabla\rho}_{L^p},\\
\norm{\nabla v}_{L^p}+\norm{\Delta\rho}_{L^p}+\norm{\nabla\rho}_{L^{2p}}^2&\sim \norm{\nabla u}_{L^p}+\norm{\Delta\rho}_{L^p}+\norm{\nabla\rho}_{L^{2p}}^2,\\
\norm{\Delta v}_{L^2}+\norm{\nabla\Delta\rho}_{L^2}&\sim\norm{\Delta u}_{L^2}+\norm{\nabla\Delta\rho}_{L^2},
\end{aligned}
\end{equation}
where $\eqref{342}_3$ is deduced by
\begin{equation*}
\begin{aligned}
\norm{\Delta v}_{L^2}&\leq C\left(\norm{\Delta u}_{L^2}+\norm{\nabla\Delta\rho}_{L^2}+\norm{\nabla\rho}_{L^6}\normf{\nabla^2\rho}_{L^3}+\norm{\nabla\rho}_{L^6}^3\right)\\
&\leq C\left(\norm{\Delta u}_{L^2}+\norm{\nabla\Delta\rho}_{L^2}+\norm{\nabla\rho}_{L^6}\norm{\nabla\Delta\rho}_{L^2}+\norm{\nabla\rho}_{L^6}^2\norm{\nabla\Delta\rho}_{L^2}\right)\\
&\leq C\left(\norm{\Delta u}_{L^2}+\norm{\nabla\Delta\rho}_{L^2}\right)
\end{aligned}
\end{equation*}
and vice versa.

Now, we come to prove. We can easily derive a similar lemma comparing with Lemma \ref{Lemma2.2} which is given as follows.
\begin{Lemma}\label{lemma3.5}
Let $(\rho,u,\pi)$ be a smooth solution of \eqref{equation1.1}, then there exist some positive constant $C$ depending only on $\Omega$, $c_0$, $\alpha$ and $\beta$ such that, for all $T\in (0,\infty)$,
\begin{equation}\label{343}
\sup_{t\in[0,T]}\norm{\rho-(\rho_0)_\Omega}_{L^2}^2+\int_0^T\norm{\nabla\rho}_{L^2}^2\,dt\leq C\norm{\nabla u_0}_{L^2}^2.
\end{equation}
Furthermore, if $\norm{\nabla u_0}_{L^2}\leq 1$ and the condition \eqref{3.39} holds, one has
\begin{gather}
\sup_{t\in[0,T]}\norm{\nabla\rho}_{L^2}^2+\int_0^T\left(\norm{\nabla\rho}_{L^3}^4+ \norm{\Delta\rho}_{L^2}^2\right)\,dt\leq C\norm{\nabla u_0}^2_{L^2},\label{344}\\
\sup_{t\in[0,T]}\cF(t)+\int_0^T\left(\cG(t)+\norm{\pi}_{H^1}^2\right)\,dt\leq C\norm{\nabla u_0}^2_{L^2},\label{3347}
\end{gather}
where
\begin{equation*}
\begin{gathered}
\cF(t):=\normf{\nabla u}_{L^2}^2+\norm{\Delta\rho}_{L^2}^2+\norm{\rho_t}_{L^2}^2,\\
\cG(t):= \norm{\nabla\Delta\rho}_{L^2}^2+\norm{u_t}_{L^2}^2+\norm{\Delta u}_{L^2}^2+\norm{\nabla\rho_t}_{L^2}^2
\end{gathered}
\end{equation*}
\end{Lemma}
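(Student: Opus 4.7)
The plan is to mirror Lemmas \ref{Lemma2.2}--\ref{lemma33} while replacing the change of unknown $v=u-c_0\nabla\rho^{-1}$ (unsuitable here because $v$ would carry non-homogeneous Dirichlet data on $\partial\Omega$) by the decomposition $u=w+Q$ with $Q:=\cB[c_0\Delta\rho^{-1}]$ and $w:=u-Q$. Since $c_0\Delta\rho^{-1}$ has vanishing mean thanks to $n\cdot\nabla\rho=0$, Lemma \ref{llemma2.2}(1) yields $Q\in W^{1,p}_0$, so $w$ is divergence-free with $w|_{\partial\Omega}=0$; any time derivative of $w$ is then $L^2$-orthogonal to $\nabla\pi$ and generates no boundary contribution under integration by parts.

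For \eqref{343}--\eqref{344} I would rerun the arguments of \eqref{2.7}--\eqref{2.8} essentially verbatim, working with the Kazhikhov--Smagulov form $\rho_t+v\cdot\nabla\rho+c_0\rho^{-2}|\nabla\rho|^2-c_0\rho^{-1}\Delta\rho=0$ as a pointwise identity. The Neumann condition for $\rho$ is unchanged, and the only boundary input actually invoked, $v\cdot n=u\cdot n-c_0\,n\cdot\nabla\rho^{-1}=0$, still holds thanks to $u|_{\partial\Omega}=0$ and $n\cdot\nabla\rho=0$. Hence every boundary term in Lemma \ref{Lemma2.2} continues to vanish and Gr\"onwall closes using \eqref{3.39} in place of \eqref{condition2.5}; the equivalence \eqref{342} lets me freely translate between $v$- and $u$-quantities.

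The key new work is \eqref{3347}. The $\rho_t$-estimate is obtained by applying $\rho_t\partial_t$ to $\eqref{equation1.1}_1$ as in \eqref{33.5}--\eqref{33.6}. For $\nabla u$, I would rewrite $\eqref{equation1.1}_2$ in the form
\begin{equation*}
\rho w_t+\rho u\cdot\nabla u-\dive[2\mu(\rho)D(u)]+\nabla\pi=-\rho Q_t,
\end{equation*}
and test against $w_t$. Since $\dive w_t=0$ and $w_t|_{\partial\Omega}=0$, the pressure integrates out; the viscous term produces $\tfrac{d}{dt}\int\mu(\rho)|D(w)|^2$ together with commutators handled as in \eqref{2.25}--\eqref{2.28} (the boundary pieces from case \eqref{equation1.6} are absent here because $v\cdot n$ and $B\cdot v$ no longer appear), while the convective and $Q$-driven contributions are controlled via Lemma \ref{Lemma221} and the Bogovski\v i bounds \eqref{1.23}. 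For $\norm{u}_{H^2}$, Lemma \ref{lemma26}(2) applied with $\Phi\equiv 0$ to the Stokes system
\begin{equation*}
-\dive[2\mu(\rho)D(w)]+\nabla\pi=-\rho w_t-\rho u\cdot\nabla u-\rho Q_t+\dive[2\mu(\rho)D(Q)],\quad\dive w=0,\quad w|_{\partial\Omega}=0,
\end{equation*}
combined with $\norm{u}_{H^2}\leq\norm{w}_{H^2}+\norm{Q}_{H^2}$ and the higher Bogovski\v i-regularity $\norm{Q}_{H^2}\leq C\bigl(\norm{\nabla\Delta\rho}_{L^2}+\norm{\nabla\rho}_{L^6}\norm{\Delta\rho}_{L^3}\bigr)$, furnishes the desired bound. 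Collecting everything and absorbing $\varepsilon$-small terms as in \eqref{3336}--\eqref{334} then produces a differential inequality of the form $\cF'(t)+\nu\cG(t)\leq C\bigl(\norm{\Delta\rho}_{L^2}^4+\norm{\nabla u}_{L^2}^4\bigr)\cF(t)+C\norm{\nabla u}_{L^2}^2$, which closes by Gr\"onwall and \eqref{3.39}; the $\norm{\pi}_{H^1}$-bound is then read off from the same Stokes estimate.

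The principal obstacle I anticipate is the cross term $\int\rho Q_t\cdot w_t$ produced by the decomposition, which couples the velocity time derivative to the density time derivative through the Bogovski\v i operator. Its control rests entirely on $\norm{Q_t}_{L^2}\leq C\bigl(\norm{\nabla\rho_t}_{L^2}+\norm{|\rho_t||\nabla\rho|}_{L^2}\bigr)$ from \eqref{1.23}: the $\nabla\rho_t$-piece is absorbed into $\cG$ by choosing $\varepsilon$ small, while $\norm{|\rho_t||\nabla\rho|}_{L^2}\leq \norm{\rho_t}_{L^3}\norm{\nabla\rho}_{L^6}$ remains small by \eqref{3.39}.
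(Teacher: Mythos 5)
Your skeleton (the splitting $u=w+Q$ with $Q=\cB[c_0\Delta\rho^{-1}]$, the test of the momentum equation against $w_t$, a Stokes estimate for the second-order level, and a final Gr\"onwall under \eqref{3.39}) is essentially the paper's, but there is a genuine gap at the point where you invoke the ``higher Bogovski\v i regularity'' $\norm{Q}_{H^2}\leq C\bigl(\norm{\nabla\Delta\rho}_{L^2}+\norm{\nabla\rho}_{L^6}\norm{\Delta\rho}_{L^3}\bigr)$. Lemma \ref{llemma2.2} only provides $\norm{\cB[f]}_{W^{1,p}}\leq C\norm{f}_{L^p}$; the $W^{1,p}\to W^{2,p}$ boundedness of $\cB$ requires the datum to lie in $W^{1,p}_0$ (vanishing trace), and here $f=c_0\Delta\rho^{-1}=\dive u$ has no reason to vanish on $\partial\Omega$ (only $n\cdot\nabla\rho=0$ is known). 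Hence $\norm{Q}_{H^2}$ is not controlled by $\norm{\Delta\rho^{-1}}_{H^1}$, and both places where you need it collapse: the term $\dive[2\mu(\rho)D(Q)]$ on the right-hand side of your Stokes system for $w$, and the passage $\norm{u}_{H^2}\leq\norm{w}_{H^2}+\norm{Q}_{H^2}$. The same quantity also hides in your energy identity: writing the viscous contribution as $\frac{d}{dt}\int\mu(\rho)|D(w)|^2$ leaves the cross term $\int 2\mu(\rho)D(Q):D(w_t)$, which requires either $\nabla w_t$ (not contained in $\cG$) or, after integrating by parts, again $\nabla^2 Q\in L^2$.

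The paper's proof avoids $\norm{Q}_{H^2}$ altogether. In the $w_t$-test it keeps $D(u)$ intact, so the viscous term yields $\bigl(\int\mu(\rho)|D(u)|^2\bigr)_t$ and the only $Q$-contribution is $N_4=-\int\dive[2\mu(\rho)D(u)]\cdot Q_t$, which needs nothing beyond $\norm{Q_t}_{L^2}\leq C\norm{\nabla\rho_t}_{L^2}$ (see \eqref{349}--\eqref{350}); and the $H^2$/pressure bound \eqref{352} is obtained not from a system for $w$ but from the Stokes-type system \eqref{328} satisfied by $v=u-c_0\nabla\rho^{-1}$, whose Dirichlet data $\Phi=-c_0\nabla\rho^{-1}$ is an $H^2(\Omega)$ function controlled by $\norm{\nabla\Delta\rho}_{L^2}$ plus lower-order interior norms, so that Lemma \ref{lemma26}(2) applies directly and the result is converted back to $u$ through \eqref{342}. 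You should repair your argument along these lines. Separately, your final inequality carries $C\norm{\nabla u}_{L^2}^2$ as an inhomogeneity, which is not a priori time-integrable uniformly in $T$; the paper also tests $\eqref{equation1.1}_2$ against $w$ (see \eqref{346}--\eqref{348}) so that the dissipation $\norm{\nabla u}_{L^2}^2$ sits on the left and only $C\bigl(\norm{\nabla\rho}_{L^2}^2+\norm{\Delta\rho}_{L^2}^2\bigr)$, integrable by \eqref{343}--\eqref{344}, remains on the right, which is what makes the constant in \eqref{3347} independent of $T$.
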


\begin{proof}
\eqref{343} has been proved in Lemma \ref{Lemma2.2} and \eqref{344} is a trivial consequence of \eqref{33.8}. Indeed, using \eqref{342}, condition  \eqref{3.39}, Lemma \ref{Lemma221} and Poincar\'e's inequality leads to
$$
\begin{aligned}
\norm{\nabla v}_{L^2}^4&\leq C\left(\norm{\nabla u}_{L^2}^4+\norm{\Delta\rho}_{L^2}^4+\norm{\nabla\rho}_{L^3}^4\norm{\nabla\rho}_{L^6}^4\right)\\
&\leq C\left(\norm{\nabla u}_{L^2}^4+\norm{\Delta\rho}_{L^2}^4\right),
\end{aligned}
$$
and, thus,
\begin{equation*}
\left(\norm{\nabla\rho}_{L^2}^2\right)_t+\nu\norm{\Delta\rho}_{L^2}^2\leq C\left(\norm{\Delta\rho}_{L^2}^4+\norm{\nabla u}^4_{L^2}\right)\norm{\nabla\rho}_{L^2}^2.
\end{equation*}

To prove \eqref{3347}, we first come to get the lower order estimate of $u$. Multiplying $w$ on both sides of $\eqref{equation1.1}_2$ and integrating over $\Omega$, one has
\begin{equation}\label{346}
\begin{aligned}
\left(\int \frac{1}{2}\rho|u|^2\right)_t+\int 2\mu(\rho)|D(u)|^2&=\int \rho u_t\cdot Q +\int \rho u\cdot \nabla u\cdot Q-\int \dive[2\mu(\rho)D(u)]\cdot Q\\
&=\int \rho u_t\cdot Q +\int \rho u\cdot \nabla u\cdot Q+\int 2\mu(\rho)D(u)\cdot \nabla Q\\
&:=\sum_{i=1}^3 M_i,
\end{aligned}
\end{equation}
where, from Lemma \ref{Lemma221},
\begin{equation}\label{347}
\begin{cases}
|M_1|\leq C\norm{u_t}_{L^2}\norm{Q}_{L^2}\leq C_{\varepsilon_1}\norm{\nabla\rho}_{L^2}^2+\varepsilon_1\norm{u_t}_{L^2}^2,\\
|M_2|\leq C\norm{u}_{L^3}\norm{\nabla u}_{L^2}\norm{\nabla\rho}_{L^6}\leq C_{\varepsilon_2}\norm{\Delta\rho}_{L^2}^4\norm{u}_{L^2}^2+\varepsilon_2\norm{\nabla u}_{L^2},\\
|M_3|\leq C\norm{\nabla u}_{L^2}\norm{\nabla Q}_{L^2}\leq C_{\varepsilon_3}\norm{\Delta\rho}_{L^2}^2+\varepsilon_3\norm{\nabla u}_{L^2}.
\end{cases}
\end{equation}
Here, we have used the following control
\begin{equation*}
\begin{aligned}
\norm{\nabla Q}_{L^2}&\leq C\left(\norm{\Delta\rho}_{L^2}+\norm{\nabla\rho}_{L^4}^2\right)\leq C\left(\norm{\Delta\rho}_{L^2}+\norm{\nabla\rho}_{L^3}\norm{\nabla\rho}_{L^6}\right)\leq C\norm{\Delta\rho}_{L^2}.
\end{aligned}
\end{equation*}
Thus, substituting \eqref{347} into \eqref{346} gives
\begin{equation}\label{348}
\left(\norm{\sqrt\rho u}_{L^2}^2\right)_t+\nu\norm{\nabla u}_{L^2}^2\leq C_\varepsilon\norm{\Delta\rho}_{L^2}^4\norm{\sqrt\rho u}_{L^2}^2+C_\varepsilon\left(\norm{\nabla\rho}_{L^2}^2+\norm{\Delta\rho}_{L^2}^2\right)+\varepsilon\norm{u_t}_{L^2}^2.
\end{equation}

Multiplying $w_t$ on both sides of $\eqref{equation1.1}_2$ and integrating over $\Omega$, one has
\begin{equation}\label{349}
\begin{aligned}
\int\rho|u_t|^2+\left(\int\mu(\rho)|D(u)|^2\right)_t&=-\int\rho u_t\cdot Q_t-\int\rho u\cdot \nabla u\cdot w_t \\
&\quad+\int\mu(\rho)_t|D(u)|^2-\int\dive[2\mu(\rho)D(u)]\cdot Q_t\\
&:=\sum_{i=1}^4N_i,
\end{aligned}
\end{equation}
where, using Lemma \ref{Lemma221},
\begin{equation}\label{350}
\begin{cases}
|N_1|&\!\!\!\!\leq C\norm{u_t}_{L^2}\norm{Q_t}_{L^2}\leq C_{\varepsilon_1}\norm{\nabla\rho_t}_{L^2}^2+\varepsilon_1\norm{u_t}_{L^2}^2,\\
|N_2|&\!\!\!\!\leq C\norm{u}_{L^6}\norm{\nabla u}_{L^3}\norm{w_t}_{L^2}\\
&\!\!\!\!\leq C_{\varepsilon_2}\norm{\nabla u}_{L^2}^4\norm{\nabla u}_{L^2}^2+\varepsilon_2\left(\norm{\nabla\rho_t}_{L^2}^2+\norm{\Delta\rho}_{L^2}^4\norm{\rho_t}_{L^2}^2\right)\\
&\!\!\!\!\quad+\varepsilon_2\left(\norm{\Delta u}_{L^2}^2+\norm{u_t}_{L^2}^2\right),\\
|N_3|&\!\!\!\!\leq C\norm{\rho_t}_{L^3}\norm{\nabla u}_{L^2}\norm{\nabla u}_{L^6}\leq C_{\varepsilon_3}\norm{\nabla u}_{L^2}^4\norm{\rho_t}_{L^2}^2+C\norm{\nabla\rho_t}_{L^2}^2+\varepsilon_3\norm{\Delta u}_{L^2}^2,\\
|N_4|&\!\!\!\!\leq C\left(\norm{\nabla\rho}_{L^6}\norm{\nabla u}_{L^3}+\norm{\Delta u}_{L^2}\right)\norm{Q_t}_{L^2}\\
&\!\!\!\!\leq C_{\varepsilon_4}\norm{\Delta\rho}_{L^2}^4\norm{\nabla u}_{L^2}^2+C_{\varepsilon_4}\norm{\nabla\rho_t}_{L^2}^2+\varepsilon_4\norm{\Delta u}_{L^2}^2,
\end{cases}
\end{equation}
where we have used 
$$\norm{Q_t}_{L^2}\leq C\left(\norm{\nabla\rho_t}_{L^2}+\norm{\nabla\rho}_{L^6}\norm{\rho_t}_{L^3}\right)\leq C\norm{\nabla\rho_t}_{L^2}.$$
Combining \eqref{349} and \eqref{350} leads to
\begin{equation}\label{351}
\begin{aligned}
\left(\normf{\sqrt{\mu(\rho)}|D(u)|}_{L^2}^2\right)_t+\nu\norm{u_t}_{L^2}^2&\leq C_\varepsilon\left(\norm{\nabla u}_{L^2}^4+\norm{\Delta \rho}_{L^2}^4\right)\left(\norm{\nabla u}_{L^2}^2+\norm{\rho_t}_{L^2}^2\right)\\
&\quad+C_\varepsilon\norm{\nabla\rho_t}_{L^2}^2+\varepsilon\norm{\Delta u}_{L^2}^2.
\end{aligned}
\end{equation}

To get $\norm{\Delta u}_{L^2}$, we follow the proof \eqref{328}--\eqref{331} and use Lemma \ref{lemma26} $(2)$ with $\Phi=-c_0\nabla\rho^{-1}$ and condition \eqref{3.39} to deduce
\begin{align*}
\norm{v}^2_{H^2}+\norm{\nabla \pi}^2_{L^2}&\leq C\left[\norm{\nabla\rho}^4_{L^6}\norm{\nabla v}^2_{L^2}+\left(1+\norm{\nabla\rho}^4_{L^6}\right)\left(\norm{F}_{L^2}^2+\norm{\Phi}_{H^2}^2\right)\right]\\
&\leq C\left(\normf{F}^2_{L^2}+\norm{\Phi}^2_{H^2}+\norm{\Delta\rho}^4_{L^2}\norm{\nabla v}^2_{L^2}\right)\\
&\leq C\left(\norm{v_t}^2_{L^2}+\norm{\nabla\rho_t}^2_{L^2}\right)+C\left(\norm{\Delta\rho}_{L^2}^4+\norm{\nabla v}_{L^2}^4\right)\norm{\Delta\rho}_{L^2}^2+C\norm{\nabla\Delta\rho}_{L^2}^2\\
&\quad+C\norm{\nabla v}_{L^2}^2+C_{\varepsilon}\left(\norm{\nabla v}_{L^2}^4+\norm{\Delta\rho}_{L^2}^4\right)\norm{\nabla v}_{L^2}^2+\varepsilon\norm{v}_{H^2}^2
\end{align*}
where $F$ as in \eqref{F}--\eqref{329}. Thus, we still have \eqref{331} and, if we convert $v$ into $u$ and $\rho$ by \eqref{342} and condition \eqref{3.39}, we can derive the bounds for $\Delta u$, that is,
\begin{equation}\label{352}
\begin{aligned}
\norm{\Delta u}_{L^2}^2+\norm{\pi}^2_{H^1}&\leq C\left(\norm{\Delta\rho}_{L^2}^4+\norm{\nabla u}_{L^2}^4\right)\left(\norm{\Delta\rho}_{L^2}^2+\norm{\nabla u}_{L^2}^2\right)\\
&\quad+C\left(\norm{\nabla\Delta\rho}_{L^2}^2+\norm{\nabla u}_{L^2}^2+\norm{u_t}^2_{L^2}+\norm{\nabla\rho_t}^2_{L^2}\right)
\end{aligned}
\end{equation}

Combining \eqref{348}, \eqref{351} and \eqref{352} and choosing $\varepsilon$ small enough, one has
\begin{equation}\label{3354}
\begin{aligned}
&\left(\normf{\nabla u}_{L^2}^2\right)_t+\frac{\nu}{2}\norm{\nabla u}_{L^2}^2+\frac{\varepsilon}{2C}\norm{\Delta u}_{L^2}^2+\frac{\nu}{2}\norm{u_t}_{L^2}^2\\
&\leq C_\varepsilon\left(\norm{\nabla u}_{L^2}^4+\norm{\Delta \rho}_{L^2}^4\right)\cF(t)+C_\varepsilon\norm{\nabla\rho_t}_{L^2}^2+\varepsilon\norm{\nabla\Delta\rho}_{L^2}^2+C\left(\norm{\nabla\rho}_{L^2}^2+\norm{\Delta\rho}_{L^2}^2\right),
\end{aligned}
\end{equation}
where we have used
$$\norm{\sqrt\rho u}_{L^2}+\normf{\sqrt{\mu(\rho)}|D(u)|}_{L^2}\sim \norm{\nabla u}_{L^2}.$$

Similarly, converting $v$ into $u$ and $\rho$, we can also obtain an analogous estimates from \eqref{318}, that is,
\begin{equation*}
\begin{aligned}
\left(\norm{\Delta\rho}_{L^2}^2\right)_t+\nu \norm{\nabla\Delta\rho}_{L^2}^2&\leq C\left(\norm{\nabla u}_{L^2}^4+\norm{\Delta\rho}_{L^2}^4\right)\norm{\Delta\rho}_{L^2}^2\\
&\quad+C_{\varepsilon_1}\norm{\Delta\rho}_{L^2}^4\norm{\nabla u}_{L^2}^2+\varepsilon_1\norm{\Delta u}_{L^2}^2,
\end{aligned}
\end{equation*}
which, combining with \eqref{352}, gives
\begin{equation}\label{355}
\begin{aligned}
\left(\norm{\Delta\rho}_{L^2}^2\right)_t+\nu \norm{\nabla\Delta\rho}_{L^2}^2&\leq C_{\varepsilon_1}\left(\norm{\nabla u}_{L^2}^4+\norm{\Delta\rho}_{L^2}^4\right)\left(\norm{\Delta\rho}_{L^2}^2+\norm{\nabla u}_{L^2}^2\right)\\
&\quad+{\varepsilon_1}\left(\norm{\nabla u}_{L^2}^2+\norm{\nabla \rho_t}_{L^2}^2\right).
\end{aligned}
\end{equation}
Combining \eqref{3354}--\eqref{355} and letting $\varepsilon,\,\varepsilon_1$ suitably small yield that, $\exists\,\nu>0$,
\begin{equation}\label{3356}
\begin{aligned}
&\left(\normf{\nabla u}_{L^2}^2+\norm{\Delta\rho}_{L^2}^2\right)_t+\nu\left(\norm{\nabla\Delta\rho}_{L^2}^2+\norm{\nabla u}_{L^2}^2+\norm{\Delta u}_{L^2}^2+\norm{u_t}_{L^2}^2\right)\\
&\leq C\left(\norm{\nabla u}_{L^2}^4+\norm{\Delta \rho}_{L^2}^4\right)\cF(t)+C\norm{\nabla\rho_t}_{L^2}^2+C\left(\norm{\nabla\rho}_{L^2}^2+\norm{\Delta\rho}_{L^2}^2\right),
\end{aligned}
\end{equation}

On the other hand, from \eqref{33.6}, we can deduce similarly that
\begin{equation}\label{356}
\left(\norm{\rho_t}_{L^2}^2\right)_t+\nu\norm{\nabla\rho_t}_{L^2}^2\leq C_{\varepsilon_2}\norm{\Delta\rho}_{L^2}^4\norm{\rho_t}_{L^2}^2+C_{\varepsilon_2}\norm{\nabla\rho}_{L^2}^2+\varepsilon_2\norm{u_t}_{L^2}^2.
\end{equation}
Then times $2C$ for \eqref{356} and plugging it into \eqref{3356}, choosing $\varepsilon_2$ sufficiently small and using Poincar\'e's inequality, we have, for some positive constant $\nu$,
\begin{equation}\label{3361}
\cF'(t)+\nu\cG(t)\leq C\left(\norm{\nabla u}_{L^2}^4+\norm{\Delta\rho}_{L^2}^4\right)\cF(t)+C\left(\norm{\nabla\rho}_{L^2}^2+\norm{\Delta\rho}_{L^2}^2\right),
\end{equation}
where we have used the following equivalent norms for convenience
\begin{equation*}
\begin{gathered}
\cF(t)\sim \normf{\nabla u}_{L^2}^2+\norm{\Delta\rho}_{L^2}^2+2C\norm{\rho_t}_{L^2}^2,\\
\cG(t)\sim \norm{\nabla\Delta\rho}_{L^2}^2+\norm{u_t}_{L^2}^2+\norm{\Delta u}_{L^2}^2+2C\norm{\nabla\rho_t}_{L^2}^2
\end{gathered}
\end{equation*}
and these equivalences do not have an influence on the final result after applying the Gr$\mathrm{\ddot{o}}$nwall's inequality. 

Thus, we get the higher order estimates for $(\rho, u)$ by using Gr$\mathrm{\ddot{o}}$nwall's inequality and \eqref{343}--\eqref{344} and the estimate of $\pi$ can be obtained from \eqref{352}. Consequently, we show the estimate \eqref{3347} and finished the proof.
\end{proof}

\begin{proof}[Proof of Proposition \ref{prop3.4}]
The proof of Proposition \ref{prop3.4} is exactly same with that of Proposition \ref{prop3.1} and, thus, we omit the proof and leave it proof to readers.
\end{proof}

\subsection{Proof of Theorem \ref{Theorem1.1}}\label{P12}
With the uniform bounds hold in our hand, the proof is rather simple. We first come to prove the case \eqref{equation1.6}. Using Lemma \ref{local}, there exists a unique strong solution $(\rho,u)$ of \eqref{equation1.1} on $\Omega\times(0,T_1)$ with initial data $(\rho_0,u_0)$ satifying the boundary condition \eqref{equation1.6}, for some positive time $T_1$. Then, one may use the a priori estimates, Proposition \eqref{prop3.1} and Lemma \ref{32}--\ref{lemma33} to extend the strong solution $(\rho,u)$ globally in time. Indeed, if $T_1<\infty$ is the maximal time for existence, then using the uniform bounds, we have
\begin{equation}
(\rho,u)(x,T_1):=\lim_{t\to T_1^-}(\rho,u)(x,t)\text{ in the sense of }H^2\times H^1
\end{equation}
satisfying the conditions imposed on the initial data, that is, $\alpha\leq\rho(T_1)\leq \beta$ and $u(T_1)\in H^1_\omega$, at the time $T_1$. Furthermore, it is easy to check that $(\rho,u)(x,T_1)$ satisfies the compatiablity condition \eqref{equation1.3}. Therefore, we can take $(\rho,u)(x,T_1)$ as the initial data and apply Lemma \ref{local} to extend the strong solution beyond $T_1$. This contradicts the maximality of $T_1$ and, hence, we finish the proof of Theorem \ref{Theorem1.1} for the case \eqref{equation1.6}.

However, for $(\rho,u)$ satisfying \eqref{equation1.7}, we can use Lemma \ref{local} and Remark \ref{local2} to extend $(\rho,u)$ on $\Omega\times(0,T_1)$ to the global one for every fixed $\epsilon\in (0,1]$. Then, using the a priori estimates, Proposition \ref{prop3.4} and Lemma \ref{lemma3.5}, we can get a uniform bounds for $(\rho^\epsilon,u^\epsilon,\pi^\epsilon)$, for all $\epsilon\in (0,1]$. More precisely, one may has, as $\epsilon\to 0^+$,
\begin{equation}\label{equation363}
\begin{cases}
\rho^\epsilon\wsconverge \rho\quad \text{in }C([0,T];H^2)\cap L^2(0,T;H^3),\\
\rho_t^\epsilon\wsconverge \rho_t\quad \text{in }C([0,T];L^2)\cap L^2(0,T;H^1),\\
u^\epsilon\wsconverge u\quad \text{in }C([0,T];H^1)\cap L^2(0,T;H^2),\\
u_t^\epsilon\wsconverge u_t\quad \text{in }C([0,T];H^1)\cap L^2(0,T;L^2),\\
\pi^\epsilon\wconverge \pi\quad \text{in } L^2(0,T;H^1).\\
\end{cases}
\end{equation}
Then, after applying Lemma \ref{lemma221}, we may derive that
\begin{equation}\label{equation364}
\begin{cases}
\rho^\epsilon\longrightarrow\rho \text{ uniformly for all }(x,t)\in \overline\Omega\times[0,T],\\
\rho^\epsilon\sconverge \rho \quad\text{in }C([0,T];H^2),\\
u^\epsilon\sconverge u\quad\text{in }C([0,T];H^1).
\end{cases}
\end{equation}
\eqref{equation363} and \eqref{equation364} are eough to let $\epsilon\to 0^+$ and recover to the original system \eqref{equation1.1}. The uniqueness can be obtained by similar method in \cite{zjw}.

\section{Proof of Theorem \ref{Theorem1.3}}\label{section4}

We first come to prove the blowup criterion. Throughout this section, we let $(\rho,u,\pi)$ be a strong solution described in Theorem \ref{Theorem1.3} and $\tilde C$ be a positive generic constant depending on $c_0$, $\alpha$, $\beta$, $T^*$, $M_0$ and $\norm{u_0}_{H^1}$. Suppose that \eqref{serrin} were false, that is, for some $r$ and $s$,
\begin{equation}\label{4.1}
\lim_{T\to T^*}\norm{u}_{L^s(0,T;L^r)}\leq M_0<\infty,
\end{equation}
or, equivalently,
\begin{equation*}
\lim_{T\to T^*}\left(\norm{v}_{L^s(0,T;L^r)}+\norm{\nabla\rho}_{L^s(0,T;L^r)}\right)\leq M_0,
\end{equation*}
we want to show the following estimate holds.

\begin{Proposition}\label{prop4.1}
Under the above condition, one has, for all $T\in[0,T^*)$,
\begin{equation}
\sup_{t\in[0,T]}\left(\norm{\rho_t}_{L^2}^2+\norm{\rho}^2_{H^2}+\norm{u}^2_{H^1}\right)+\int_0^T\left(\norm{\rho_t}_{H^1}^2+\norm{\rho}_{H^3}^2+\norm{\nabla u}_{H^1}^2\right)\,dt\leq \tilde C.
\end{equation}
\end{Proposition}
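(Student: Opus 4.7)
The plan is to mimic the architecture of Section \ref{section3}, working again with the Kazhikhov--Smagulov reformulation $v = u - c_0\nabla\rho^{-1}$ of system \eqref{equation1.18} (respectively the $u = w + Q$ decomposition in the Dirichlet case), but to replace the smallness of $\|\nabla u_0\|_{L^2}$ by the Serrin bound \eqref{4.1} as the mechanism for closing the Grönwall arguments. Observe that \eqref{4.1} together with the transport-diffusion of $\rho$ gives a Serrin-type bound on $\nabla\rho$ as well (via the mass equation and maximal regularity), so one gets $\|v\|_{L^s(0,T;L^r)}\leq \tilde C$ on $[0,T^*)$ as well.

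The first step is the low-order bound on $v$, obtained by testing $(\ref{equation1.18})_2$ against $v$ as in Lemma \ref{Lemma2.2}. The convection and cross terms of the form $\int \rho v\cdot\nabla v\cdot v$ and $\int \rho v\cdot\nabla v\cdot\nabla\rho^{-1}$ are handled by Hölder using the Serrin exponents: the interpolation $\|\nabla v\|_{L^{2r/(r-2)}}\le C\|\nabla v\|_{L^2}^{1-3/r}\|v\|_{H^2}^{3/r}$ combined with Young's inequality produces
\begin{equation*}
\int_0^T\|v\|_{L^r}^2\|\nabla v\|_{L^{2r/(r-2)}}^2\,dt \;\le\; \varepsilon\int_0^T\|v\|_{H^2}^2\,dt + C\int_0^T\|v\|_{L^r}^s\|\nabla v\|_{L^2}^2\,dt,
\end{equation*}
with the coefficient $\|v\|_{L^r}^s$ integrable in time by \eqref{4.1}. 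This pattern, together with Grönwall, yields $v\in L^\infty(0,T^*;L^2)\cap L^2(0,T^*;H^1)$ together with a boundary-term control handled exactly as in \eqref{221}.

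The second step, flagged in the paper's own overview as the main obstacle, is the $H^2$-bound on $\rho$. A naive test of $(\ref{equation1.18})_1$ against $-\Delta\rho$ or $-\nabla\Delta\rho\cdot\nabla$ forces us to control $\|\nabla\rho\|_{L^6}^3$, which is not available under \eqref{4.1}. To bypass this, I would use the reformulation
\begin{equation*}
\rho_t + v\cdot\nabla\rho \;=\; c_0\Delta\log\rho
\end{equation*}
and estimate $\log\rho$ instead. Testing the equation against $-\Delta\log\rho$ and, at a second level, against $\nabla\cdot(-\nabla\Delta\log\rho)$, produces parabolic identities with clean dissipations $\int|\Delta\log\rho|^2$ and $\int|\nabla\Delta\log\rho|^2$, and convective terms of the form $\int (v\cdot\nabla\log\rho)\Delta\log\rho$ and $\int (\nabla v : \nabla\log\rho \otimes \nabla \log\rho)\Delta\log\rho$; each such term is controlled by $\|v\|_{L^r}$ and an interpolation of $\nabla\log\rho$ and $\Delta\log\rho$, with the $L^r_xL^s_t$-norm of $v$ appearing to a power summable in time. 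Using $\alpha\le\rho\le\beta$ to transfer bounds on $\log\rho$ to $\rho$ (since $\nabla\log\rho = \nabla\rho/\rho$), we obtain $\rho\in L^\infty(0,T^*;H^2)\cap L^2(0,T^*;H^3)$ and, via the mass equation, $\rho_t\in L^\infty(0,T^*;L^2)\cap L^2(0,T^*;H^1)$.

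With $\rho$ under control, the third step is the $H^1$-bound on $u$. I would test the momentum equation against $v_t$ as in Lemma \ref{lemma33}, and plug the resulting force $F$ of \eqref{F} into the Stokes estimate of Lemma \ref{lemma26} (or Lemma \ref{lemma2.8} in the Dirichlet case) to bound $\|v\|_{H^2}+\|\pi\|_{H^1}$ in terms of $\|v_t\|_{L^2}$, $\|\nabla\rho_t\|_{L^2}$ and density quantities already controlled by the previous step; the cross terms coming from $\nabla\mu(\rho)\cdot D(v)\cdot v_t$ and the boundary contribution are again absorbed using Hölder with $\|v\|_{L^r}^s$ as time-integrable prefactor. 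Closing via Grönwall and translating back through $u = v + c_0\nabla\rho^{-1}$ yields the claimed bound. The hardest part is genuinely the second step: keeping every cubic-in-$\nabla\rho$ contribution hidden inside the log-dissipation, so that it can be absorbed on the left-hand side rather than forcing a bound on $\|\nabla\rho\|_{L^6}^3$ that Serrin's condition does not provide.
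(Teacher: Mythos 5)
Your plan is, in outline, the paper's own proof: rewrite the mass equation as $(\log\rho)_t+v\cdot\nabla\log\rho=c_0\rho^{-1}\Delta\log\rho$ so that the cubic-in-$\nabla\rho$ terms are hidden in the log-dissipation, run Gr\"onwall arguments whose coefficients are the time-integrable Serrin quantities $\norm{\nabla\rho}_{L^r}^s$, $\norm{v}_{L^r}^s$ (resp.\ $\norm{u}_{L^r}^s$), and close the $H^2$-level through the Stokes estimates of Lemmas \ref{lemma26} and \ref{lemma2.8}; this is precisely the content of Lemmas \ref{lemma44}--\ref{lemma45} and their analogues for the case \eqref{equation1.7}.

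There is, however, a genuine ordering problem in your absorption scheme. In your first step you place the differentiated factor in $L^{2r/(r-2)}$ and interpolate $\norm{\nabla v}_{L^{2r/(r-2)}}\le C\norm{\nabla v}_{L^2}^{1-3/r}\norm{v}_{H^2}^{3/r}$, leaving a term $\varepsilon\int_0^T\norm{v}_{H^2}^2\,dt$ to be absorbed. At the low-order stage the only dissipation available is $\norm{\nabla v}_{L^2}^2$, and $\int_0^T\norm{v}_{H^2}^2\,dt$ is not yet known to be finite (it only appears after the $v_t$-test combined with the Stokes estimate), so the claimed Gr\"onwall conclusion $v\in L^\infty(0,T^*;L^2)\cap L^2(0,T^*;H^1)$ is circular as written. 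The same issue undermines your claim to obtain $\rho\in L^\infty(0,T^*;H^2)\cap L^2(0,T^*;H^3)$ \emph{before} the higher-order velocity estimate: the convective term $\int\nabla\Delta\log\rho\cdot\nabla v\cdot\nabla\log\rho$ necessarily carries $\nabla v$ in $L^{2r/(r-2)}$ and hence an $\varepsilon\norm{v}_{H^2}^2$ remainder. The paper avoids both difficulties: at the low-order level it interpolates the \emph{undifferentiated} factor, $\norm{v}_{L^{2r/(r-2)}}\le C\norm{v}_{L^2}^{1-3/r}\norm{\nabla v}_{L^2}^{3/r}$ as in \eqref{equation4.9}, so Young's inequality produces only $C\left(\norm{\nabla\rho}_{L^r}^s+1\right)\norm{v}_{L^2}^2+\varepsilon\norm{\nabla v}_{L^2}^2$, which the available dissipation absorbs; and at the higher-order level it does not decouple the density and velocity estimates but sums the inequalities for $\Delta\rho$, $\Delta\log\rho$, $(\log\rho)_t$ and $\nabla v$ into the single functional $\tilde\cP(t)$, absorbing every $\varepsilon\norm{v}_{H^2}^2$ through the Stokes bound \eqref{4.33} before one joint Gr\"onwall (Lemma \ref{lemma45}). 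With that reordering your argument coincides with the paper's. A further soft spot, which you share with the paper, is the passage from \eqref{4.1} to the time-integrability of $\norm{\nabla\rho}_{L^r}^s$ and $\norm{v}_{L^r}^s$: you invoke parabolic maximal regularity, the paper asserts an equivalence, but neither is written out, and if you keep this as a separate preliminary step it needs a proof.
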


The proof of Proposition \ref{prop4.1} will be separated into the following two parts. 

\subsection{Case for $(\rho,u)$ satisfying \eqref{equation1.6}}\label{11}

The first lemma is the part of Lemma \ref{Lemma2.2}, we give it here for convenience.
\begin{Lemma}\label{lemma4.2}
The following bounds hold for condition \eqref{equation1.6} and for all $T\in[0,T^*)$, that is,
\begin{equation}
\alpha\leq\rho\leq\beta,\quad\sup_{t\in[0,T]}\norm{\rho-(\rho_0)_\Omega}_{L^2}^2+\nu\int_0^T\norm{\nabla\rho}_{L^2}^2\,dt\leq \norm{\rho_0-(\rho_0)_\Omega}_{L^2}^2,
\end{equation}
\end{Lemma}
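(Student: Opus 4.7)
The plan is essentially to recycle the first half of the proof of Lemma~\ref{Lemma2.2}, since the statement here is exactly that portion specialized to the strong solution from Theorem~\ref{Theorem1.3}. I would begin by switching from the original mass equation $\rho_t+\dive(\rho u)=0$ to the Kazhikhov--Smagulov form $\rho_t+v\cdot\nabla\rho+c_0\rho^{-2}|\nabla\rho|^2-c_0\rho^{-1}\Delta\rho=0$ via $v=u-c_0\nabla\rho^{-1}$ as in \eqref{equation1.17}--\eqref{equation1.18}. Under condition \eqref{equation1.6}, $v$ is divergence-free with $v\cdot n=0$ on $\partial\Omega$ and $\rho$ satisfies $n\cdot\nabla\rho=0$, so the resulting equation can be rewritten as $\rho_t+v\cdot\nabla\rho-c_0\Delta\log\rho=0$; the constants $\alpha$ and $\beta$ are stationary sub/super-solutions, and the standard maximum principle (or Stampacchia truncation on $(\rho-\beta)_+$ and $(\alpha-\rho)_+$) then yields $\alpha\le\rho\le\beta$ on $[0,T^*)$.

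For the energy inequality, I would test the Kazhikhov--Smagulov equation against $\rho-(\rho_0)_\Omega$. Integrating the original equation $\rho_t+\dive(\rho u)=0$ over $\Omega$ together with $u\cdot n=0$ gives $(\rho(t))_\Omega=(\rho_0)_\Omega$, so the test function has mean zero at every time. The transport term $\int(\rho-\bar\rho)\,v\cdot\nabla\rho$ vanishes since $\dive v=0$ and $v\cdot n=0$. Integrating the diffusion term $-c_0\int \rho^{-1}\Delta\rho\,(\rho-\bar\rho)$ by parts — the boundary contribution dies because of $n\cdot\nabla\rho=0$ — produces $c_0\int \rho^{-1}|\nabla\rho|^2-c_0\int \rho^{-2}(\rho-\bar\rho)|\nabla\rho|^2$, and the second piece exactly cancels the nonlinear term $c_0\int \rho^{-2}|\nabla\rho|^2(\rho-\bar\rho)$ coming from the equation. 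What remains is
\[
\frac{d}{dt}\tfrac12\|\rho-\bar\rho\|_{L^2}^2+c_0\int\rho^{-1}|\nabla\rho|^2=0,
\]
and using $\rho\le\beta$ on the left gives $\tfrac{d}{dt}\|\rho-\bar\rho\|_{L^2}^2+\nu\|\nabla\rho\|_{L^2}^2\le 0$ with $\nu=2c_0/\beta$. Integrating on $[0,T]$ yields the stated inequality.

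There is no serious obstacle: the only points requiring care are tracking the algebraic cancellation between the $\rho^{-1}\Delta\rho$ and $\rho^{-2}|\nabla\rho|^2$ terms after integration by parts, and verifying that both the advective and the boundary contributions vanish under the boundary condition \eqref{equation1.6}. Note also that the hypothesis \eqref{4.1} is not used anywhere in this step, so the bound is valid on the entire maximal interval $[0,T^*)$, as needed for the subsequent higher-order estimates in Section~\ref{section4}.
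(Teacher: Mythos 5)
Your argument is correct and coincides with the paper's own proof: the paper treats this bound as the first part of Lemma \ref{Lemma2.2}, obtaining $\alpha\le\rho\le\beta$ by the standard maximum principle and then testing $\eqref{equation1.18}_1$ with $\rho-(\rho_0)_\Omega$ to get $\left(\norm{\rho-(\rho_0)_\Omega}_{L^2}^2\right)_t+\nu\norm{\nabla\rho}_{L^2}^2\le 0$, exactly the cancellation you carry out explicitly (the boundary terms vanishing by $n\cdot\nabla\rho=0$, $v\cdot n=0$, $\dive v=0$). Your observation that condition \eqref{4.1} is not needed here matches the paper's use of the estimate on all of $[0,T^*)$.
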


Next, we give the lower order bounds for $(\log\rho,v)$, that is,
\begin{Lemma}\label{lemma44}
Suppose that \eqref{4.1} holds and $(\rho,u)$ satisfies \eqref{equation1.6}, then one has
\begin{equation}\label{equation4.4}
\sup_{t\in[0,T]}\left(\norm{\nabla\log\rho}^2_{L^2}+\norm{v}^2_{L^2}\right)+\int_0^T\left(\norm{\Delta\log\rho}_{L^2}^2+\norm{\nabla v}_{L^2}^2\right)\,dt\leq \tilde C.
\end{equation}
\end{Lemma}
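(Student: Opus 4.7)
The plan is to carry out two sequential $H^1$-type energy estimates—one for $\log\rho$ and one for $v$—both closed by the Serrin relation $\tfrac{2}{s}+\tfrac{3}{r}\leq 1$. The key preliminary observation is the algebraic identity $\rho^{-1}\Delta\rho-\rho^{-2}|\nabla\rho|^2=\Delta\log\rho$, which turns $\eqref{equation1.18}_1$ into the clean parabolic equation
\begin{equation*}
\rho_t+v\cdot\nabla\rho-c_0\Delta\log\rho=0,\qquad\text{equivalently}\qquad(\log\rho)_t+v\cdot\nabla\log\rho=c_0\rho^{-1}\Delta\log\rho,
\end{equation*}
with the inherited Neumann condition $n\cdot\nabla\log\rho=0$ on $\partial\Omega$; the bounds $\alpha\leq\rho\leq\beta$ make this change of variable legitimate.

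Testing the $\log\rho$ equation against $-\Delta\log\rho$ and discarding the vanishing boundary term yields
\begin{equation*}
\tfrac{1}{2}\tfrac{d}{dt}\norm{\nabla\log\rho}_{L^2}^2+c_0\int\rho^{-1}|\Delta\log\rho|^2\,dx=\int(v\cdot\nabla\log\rho)\Delta\log\rho\,dx.
\end{equation*}
I would then combine H\"older's inequality, the Gagliardo--Nirenberg inequality (Lemma~\ref{Lemma221}, where the vanishing normal trace of $\nabla\log\rho$ makes $C_1=0$), and the elliptic estimate $\norm{\nabla^2\log\rho}_{L^2}\leq C\norm{\Delta\log\rho}_{L^2}$ from Lemma~\ref{lemma22} (valid since $\curle\nabla\log\rho=0$ and $\nabla\log\rho\cdot n=0$) to bound the right-hand side by $C\norm{v}_{L^r}\norm{\nabla\log\rho}_{L^2}^{1-3/r}\norm{\Delta\log\rho}_{L^2}^{1+3/r}$. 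Young's inequality with the conjugate pair $\bigl(\tfrac{2r}{r+3},\tfrac{2r}{r-3}\bigr)$ absorbs the $\Delta\log\rho$-power into the dissipation and leaves a Gr\"onwall coefficient $C\norm{v}_{L^r}^{2r/(r-3)}$. The Serrin relation is exactly $s\geq\tfrac{2r}{r-3}$, so this coefficient lies in $L^1(0,T^*)$, and Gr\"onwall closes the $\log\rho$ half of \eqref{equation4.4}.

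For $v$, I would test $\eqref{equation1.18}_2$ against $v$ and follow the derivation in Lemma~\ref{Lemma2.2}. The novel feature is that $\rho_t+\dive(\rho v)=c_0\Delta\log\rho$ is no longer zero, so the energy identity carries an extra term $\tfrac{c_0}{2}\int|v|^2\Delta\log\rho$; integration by parts (boundary vanishes because $n\cdot\nabla\log\rho=0$) rewrites it as $-c_0\int v\cdot\nabla v\cdot\nabla\log\rho$, which is controlled by the same Young/Gagliardo--Nirenberg argument used in Step~2. The interior and boundary terms $H_1$--$H_4$ from \eqref{219}--\eqref{221} are treated as there, after the algebraic substitutions
\begin{equation*}
\nabla\rho^{-1}=-\rho^{-1}\nabla\log\rho,\qquad\nabla^2\rho^{-1}=-\rho^{-1}\nabla^2\log\rho+\rho^{-1}\nabla\log\rho\otimes\nabla\log\rho,
\end{equation*}
so that every $\nabla\rho$, $\nabla^2\rho$ factor is expressed in quantities already controlled by Step~2. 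Each $\norm{\nabla v}_{L^2}$-factor is absorbed by Young into $\nu\norm{\nabla v}_{L^2}^2$, each $\norm{v}_{L^p}$-factor is upgraded to $\norm{v}_{L^r}$ by the embedding $L^r\hookrightarrow L^p$ on the bounded domain (valid since $r>3$), and the resulting Gr\"onwall coefficient is brought into $L^1(0,T^*)$ through time-H\"older with the Serrin exponents. A final Gr\"onwall, fed by the $\log\rho$-bound of Step~2 and by the Serrin norm $M_0$, delivers the $v$-half of \eqref{equation4.4}.

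The technical heart—and the main obstacle—lies in the $v$-estimate. Lemma~\ref{Lemma2.2} exploited the smallness hypothesis~\eqref{condition2.5} to make $\norm{\Delta\rho}_{L^2}^4\in L^1_t$, which freely tolerated quartic $\nabla\rho$-factors; here only $\Delta\log\rho\in L^2_tL^2_x$ is available, so powers of $\norm{\Delta\log\rho}_{L^2}$ above $2$ are \textbf{not} automatically time-integrable. Every such offending factor must therefore be split as $\norm{\Delta\log\rho}_{L^2}\cdot\norm{v}_{L^r}^{m}$ via H\"older in space and then paired against $\norm{v}_{L^s(0,T;L^r)}\leq M_0$ via H\"older in time; the sharp Serrin relation $\tfrac{2}{s}+\tfrac{3}{r}\leq 1$ supplies exactly the required integrability. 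The boundary term $H_4$ additionally requires the $v=v^\perp\times n$ trick from the introduction to convert it into a manageable interior integral.
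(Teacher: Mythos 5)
Your proposal takes essentially the same route as the paper's proof: rewrite the mass equation as $(\log\rho)_t+v\cdot\nabla\log\rho=c_0\rho^{-1}\Delta\log\rho$, test it against $-\Delta\log\rho$, test the momentum equation of \eqref{equation1.18} against $v$ re-using the $H_1$--$H_4$ estimates with Serrin-exponent H\"older/Gagliardo--Nirenberg/Young (including the $v=v^\perp\times n$ treatment of the boundary term), and close by Gr\"onwall; your sequential closure (Serrin norm placed on $v$ in the $\log\rho$ step) versus the paper's coupled closure (Serrin norm on $\nabla\rho$, then adding a small multiple of the $v$-inequality to absorb the residual $\norm{\Delta\log\rho}_{L^2}^2$) is only a bookkeeping difference, and your ``extra term'' $\tfrac{c_0}{2}\int|v|^2\Delta\log\rho$ in fact cancels against one of the convective cross terms in the paper's identity \eqref{218}, so estimating it separately is harmless. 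One caution: your closing remark that each $\norm{v}_{L^p}$-factor can be upgraded to $\norm{v}_{L^r}$ via $L^r\hookrightarrow L^p$ and then handled by time-H\"older is not valid across the full Serrin range (for $3<r<4$ one has $\tfrac{2r}{r-2}>r$, and even for $4\le r<6$ the product $\norm{\nabla\rho}_{L^r}^2\norm{v}_{L^r}^2$ need not be time-integrable); those factors must instead be interpolated against the energy and the dissipation exactly as you do in your $\log\rho$ step and as in the paper's \eqref{equation4.9}.
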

\begin{proof}
We first change \eqref{equation1.18} to the form
\begin{equation}\label{log}
(\log\rho)_t +v\cdot\nabla\log\rho  - c_0\rho^{-1}\Delta\log\rho=0,
\end{equation}
and, then, multiplying $-\Delta\log\rho$ on both sides of \eqref{log}, integrating over $\Omega$ and using Lemma \ref{Lemma221} imply that
\begin{equation}\label{44.6}
\begin{aligned}
&\left(\frac{1}{2}\int |\nabla\log\rho|^2\right)_t+\int c_0\rho^{-1}|\Delta\log\rho|^2\\
&=\int (v\cdot \nabla\log\rho)\Delta\log\rho\\
&\leq \norm{\nabla\log\rho}_{L^r}\norm{v}_{L^{\frac{2r}{r-2}}}\norm{\Delta\log\rho}_{L^2}\\
&\leq C_\varepsilon\norm{\nabla\log\rho}_{L^r}^2\norm{v}_{L^2}^{\frac{2r-6}{r}}\norm{\nabla v}_{L^2}^{\frac{6}{r}}+\varepsilon\norm{\Delta\log\rho}_{L^2}^2\\
&\leq C_\varepsilon\left(\norm{\nabla\rho}_{L^r}^s+1\right)\norm{v}^2_{L^2}+\varepsilon\left(\norm{\Delta\log\rho}_{L^2}^2+\norm{\nabla v}_{L^2}^2\right),
\end{aligned}
\end{equation}
that is
\begin{equation}\label{equation4.6}
\left(\norm{\nabla\log\rho}_{L^2}^2\right)_t+\nu\norm{\Delta\log\rho}_{L^2}^2\leq C_\varepsilon\left(\norm{\nabla\rho}_{L^r}^s+1\right)\norm{v}^2_{L^2}+\varepsilon\norm{\nabla v}_{L^2}^2.
\end{equation}

To estimate the rest part of \eqref{equation4.4}, it follows from \eqref{218}--\eqref{219} that
\begin{equation}\label{equation4.8}
\begin{aligned}
\left(\norm{\sqrt{\rho}v}_{L^2}^2\right)_t+\nu\norm{\nabla v}_{L^2}^2&\leq C\left(\norm{\nabla\log\rho}_{H^1}\norm{\nabla v}_{L^2}+\norm{\nabla\log\rho}_{L^r}\norm{v}_{L^{\frac{2r}{r-2}}}\norm{\nabla v}_{L^2}\right)\\
&\quad+\sum_{i=1}^4H_i,
\end{aligned}
\end{equation}
For $H_1$--$H_4$, using Lemma \ref{Lemma221}, we have, from \eqref{220}--\eqref{221}
\begin{equation}\label{equation4.9}
\begin{cases}
|H_1|&\!\!\!\!\leq C\left(\norm{\Delta\log\rho}_{L^2}+\norm{\nabla\log\rho}_{L^r}\norm{\nabla\log\rho}_{L^{\frac{2r}{r-2}}}\right)\norm{\nabla v}_{L^2}\\
&\!\!\!\!\leq C_{\varepsilon_1}\left(\norm{\nabla\rho}_{L^r}^s+1\right)\norm{\nabla\log\rho}_{L^2}^2+\varepsilon_1\left(\norm{\nabla v}_{L^2}^2+\norm{\Delta\log\rho}_{L^2}^2\right),\\
|H_2|&\!\!\!\!\leq C\norm{\nabla\log\rho}_{L^r}\norm{v}_{L^{\frac{2r}{r-2}}}\norm{\nabla v}_{L^2}\\
&\!\!\!\!\leq C_{\varepsilon_2}\left(\norm{\nabla\rho}_{L^r}^s+1\right)\norm{v}_{L^2}^2+\varepsilon_2\left(\norm{\nabla v}_{L^2}^2+\norm{\Delta\log\rho}_{L^2}^2\right),\\
|H_3|&\!\!\!\!\leq C\norm{\nabla\log\rho}_{L^r}\norm{\nabla\log\rho}_{L^{\frac{2r}{r-2}}}\norm{\nabla v}_{L^2}\\
&\!\!\!\!\leq C_{\varepsilon_3}\left(\norm{\nabla\rho}_{L^r}^s+1\right)\norm{\nabla\log\rho}_{L^2}^2+\varepsilon_3\left(\norm{\nabla v}_{L^2}^2+\norm{\Delta\log\rho}_{L^2}^2\right),\\
|H_4|&\!\!\!\!\leq C\left(\norm{v}_{H^1}\norm{\nabla\log\rho}_{H^1}+\norm{\nabla\log\rho}_{L^r}\norm{v}_{L^{\frac{2r}{r-2}}}\norm{\nabla\log\rho}_{L^2}\right)\\
&\!\!\!\!\leq C_{\varepsilon_4}\left[\norm{\Delta\log\rho}_{L^2}^2+\left(\norm{\nabla\rho}_{L^r}^s+1\right)\norm{v}_{L^2}^2\right]+\varepsilon_4\left(\norm{\nabla v}_{L^2}^2+\norm{\nabla\log\rho}_{L^2}^2\right).
\end{cases}
\end{equation}
Combining \eqref{equation4.8} and \eqref{equation4.9}, we deduce that
\begin{equation}\label{equation4.10}
\left(\norm{\sqrt{\rho}v}_{L^2}^2\right)_t+\nu\norm{\nabla v}_{L^2}^2\leq C\left(\norm{\nabla\rho}_{L^r}^s+1\right)\left(\norm{v}_{L^2}^2+\norm{\nabla\log\rho}_{L^2}^2\right)+C\norm{\Delta\log\rho}_{L^2}^2.
\end{equation}
Multiplying $2\varepsilon$ on \eqref{equation4.10} and alonging with \eqref{equation4.6}, then chooseing $\varepsilon$ suitably small gives
\begin{equation*}
\begin{aligned}
&\left(\frac{1}{2C}\norm{\sqrt{\rho}v}_{L^2}^2+\norm{\nabla\log\rho}_{L^2}^2\right)_t+\frac{\nu}{2}\left(\frac{1}{2C}\norm{\nabla v}_{L^2}^2+\norm{\Delta\log\rho}_{L^2}^2\right)\\
&\leq C\left(\norm{\nabla\rho}_{L^r}^s+1\right)\left(\norm{v}_{L^2}^2+\norm{\nabla\log\rho}_{L^2}^2\right).
\end{aligned}
\end{equation*}
which, using Gr$\mathrm{\ddot{o}}$nwall's inequality, condition \eqref{4.1} and Lemma \ref{lemma4.2}, implies \eqref{equation4.4}. 
\end{proof}

\begin{Lemma}\label{lemma45}
Suppose that \eqref{4.1} holds and $(\rho,u)$ satisfies \eqref{equation1.6}, then
\begin{equation}\label{4.11}
\begin{aligned}
\sup_{t\in[0,T]}\tilde\cP(t)+\int_0^T\left(\tilde\cQ(t)+\norm{\pi}_{H^1}^2\right)\,dt\leq \tilde C.
\end{aligned}
\end{equation}
where
\begin{gather*}
\tilde\cP(t):=\norm{(\log\rho)_t}_{L^2}^2+\norm{\Delta\rho}^2_{L^2}+\norm{\Delta\log\rho}^2_{L^2}+\norm{\nabla v}^2_{L^2},\\
\tilde\cQ(t):=\norm{\nabla(\log\rho)_t}_{L^2}^2+\norm{\nabla\Delta\rho}_{L^2}^2+\norm{\nabla\Delta\log\rho}_{L^2}^2+\normf{\nabla^2 v}_{L^2}^.
\end{gather*}
\end{Lemma}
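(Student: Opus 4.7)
The plan is to mirror the architecture of Lemma~\ref{lemma33}, substituting the logarithmic form \eqref{log} of the density equation for $\eqref{equation1.18}_1$, and the Serrin-type integrability \eqref{4.1} for the smallness of $\|\nabla u_0\|_{L^2}$. This substitution is essential because, as stressed in the introduction, the quadratic term $c_0\rho^{-2}|\nabla\rho|^2$ in $\eqref{equation1.18}_1$ generates factors of $\|\nabla\rho\|_{L^6}^3$ that cannot be absorbed through the Serrin control alone. Every nonlinear term will therefore be split into a product of a time-integrable multiplier (built out of $\|v\|_{L^r}^s$, $\|\nabla\rho\|_{L^r}^s$, and the quantities bounded by Lemma~\ref{lemma44}) and a factor drawn from $\tilde{\mathcal P}(t)$, the exponent $s=2r/(r-3)$ being exactly the one dictated by the Serrin scaling $2/s+3/r=1$.

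First, I would differentiate \eqref{log} in time and test the result against $(\log\rho)_t$. Since $n\cdot\nabla\rho=0$ on $\partial\Omega$, the $c_0\rho^{-1}\Delta(\log\rho)_t$ contribution yields a dissipation $\nu\|\nabla(\log\rho)_t\|_{L^2}^2$ after integration by parts, modulo lower order terms in $\rho_t$ and $\nabla\rho$. The critical coupling term $\int v_t\cdot\nabla\log\rho\,(\log\rho)_t$ will be estimated by H\"older and Gagliardo--Nirenberg as
\[
\|v_t\|_{L^2}\|\nabla\log\rho\|_{L^r}\|(\log\rho)_t\|_{L^{2r/(r-2)}} \leq C_\varepsilon\bigl(\|\nabla\rho\|_{L^r}^s+1\bigr)\|(\log\rho)_t\|_{L^2}^2 + \varepsilon\bigl(\|v_t\|_{L^2}^2+\|\nabla(\log\rho)_t\|_{L^2}^2\bigr),
\]
so the density estimate is coupled to the velocity only through a small $\|v_t\|_{L^2}$-term.

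Next, I would test $\eqref{equation1.18}_2$ with $v_t$ and reprise the computation \eqref{2.23}--\eqref{332}, with the Gagliardo--Nirenberg splittings now driven by $\|v\|_{L^r}$ and $\|\nabla\rho\|_{L^r}$ (in place of the smallness-based splittings of Lemma~\ref{lemma33}). Boundary terms will be handled exactly as in \eqref{221} using the $v^\perp\times n$ identity and Lemmas~\ref{lemma22}--\ref{lemma23}. Lemma~\ref{lemma26}(1), applied with $\Phi=-B\cdot(v+c_0\nabla\rho^{-1})$ and $F$ as in \eqref{F}, then converts the $\|v_t\|_{L^2}$ control into the full $\|v\|_{H^2}^2+\|\pi\|_{H^1}^2$ bound. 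The algebraic identities $c_0\rho^{-1}\Delta\log\rho=(\log\rho)_t+v\cdot\nabla\log\rho$ and $\Delta\rho=\rho\,\Delta\log\rho+\rho^{-1}|\nabla\rho|^2$, together with their spatial gradients, will recover the remaining pieces $\|\Delta\log\rho\|_{L^2}$, $\|\Delta\rho\|_{L^2}$, $\|\nabla\Delta\log\rho\|_{L^2}$ and $\|\nabla\Delta\rho\|_{L^2}$ from $\|(\log\rho)_t\|_{H^1}$, $\|v\|_{H^1}$ and the lower-order bounds of Lemma~\ref{lemma44}. Summing the density and velocity inequalities with appropriate weights, absorbing the $\varepsilon$-terms, and invoking Gr\"onwall against the multiplier $\Psi(t):=C(1+\|v\|_{L^r}^s+\|\nabla\rho\|_{L^r}^s)\in L^1(0,T)$ will close the estimate and yield \eqref{4.11}.

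The hardest part will be the bookkeeping: every nonlinearity must be split so that the factor carrying a derivative of $\rho$ at a supercritical power (in particular the terms arising from $\dive[\mu(\rho)\nabla^2\rho^{-1}]$, $\dive(\rho\nabla\rho^{-1}\otimes\nabla\rho^{-1})$ in \eqref{F}, and their $\partial_t$-analogues in the $v_t$-test) ends up controlled through $\|\Delta\log\rho\|_{L^2}$ or $\|\nabla\Delta\log\rho\|_{L^2}$ rather than through $\|\nabla\rho\|_{L^6}^3$, while the factor carrying velocity is isolated in a power of $\|v\|_{L^r}$ matching the Serrin exponent. Making the interpolation exponents balance simultaneously in all the boundary, time-derivative, and Stokes contributions is where the real work lies.
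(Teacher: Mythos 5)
Your overall architecture (the logarithmic reformulation \eqref{log}, Serrin-scaled multipliers, the $v^{\perp}\times n$ treatment of boundary integrals, Gr\"onwall against an $L^1(0,T)$ coefficient) agrees with the paper, and your estimate of the coupling term $\int v_t\cdot\nabla\log\rho\,(\log\rho)_t$ is exactly the paper's $P_2$. But one step fails as proposed: the $H^2$--Stokes estimate. You invoke Lemma \ref{lemma26}(1), whose bound carries the prefactor $\bigl(1+\norm{\nabla\mu(\rho)}_{L^r}^{r/(r-3)}\bigr)$ in front of $\norm{F}_{L^2}+\norm{\Phi}_{H^1}$. In Lemma \ref{lemma33} this was harmless because the bootstrap hypothesis \eqref{condition2.5} gives $\norm{\nabla\rho}_{L^6}\leq 2$ uniformly in time; in the blowup setting the only information is $\nabla\rho\in L^s(0,T;L^r)$, so $\norm{\nabla\mu(\rho)}_{L^r}$ has no pointwise-in-time bound. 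Since $\norm{F}_{L^2}$ (with $F$ as in \eqref{F}) contains precisely the dissipation-order quantities $\norm{v_t}_{L^2}$ and $\norm{\nabla(\log\rho)_t}_{L^2}$, the resulting contribution $\bigl(1+\norm{\nabla\rho}_{L^r}^{2r/(r-3)}\bigr)\norm{v_t}_{L^2}^2$ can be neither absorbed into $\nu\norm{v_t}_{L^2}^2$ nor written as an integrable coefficient times $\tilde\cP(t)$, and the $\varepsilon\norm{v}_{H^2}^2$ terms generated elsewhere can no longer be converted and absorbed. The paper's proof replaces Lemma \ref{lemma26} by the combination of Lemma \ref{lemma2.7} (H\"older continuity of $\rho$, available because $v\in L^s(0,T;L^r)$, $\dive v=0$, $v\cdot n=0$) and Lemma \ref{lemma2.8}, \eqref{2.15}, whose constant is independent of $\nabla\mu(\rho)$; see \eqref{4.33}. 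This substitution is the key new ingredient of the lemma and is missing from your plan.

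A second structural difference is also risky. You propose to recover $\norm{\Delta\rho}_{L^2}$, $\norm{\Delta\log\rho}_{L^2}$ and their gradients algebraically from \eqref{log} and from $\Delta\rho=\rho\,\Delta\log\rho+\rho^{-1}|\nabla\rho|^2$, whereas the paper tests with $-\nabla\Delta\log\rho\cdot\nabla$ and $-\nabla\Delta\rho\cdot\nabla$ (handling the boundary terms $B_1,B_2$ with the $v^{\perp}\times n$ identity) to obtain \eqref{4.22}. The dissipation $\nu\bigl(\norm{\nabla\Delta\rho}_{L^2}^2+\norm{\nabla\Delta\log\rho}_{L^2}^2\bigr)$ produced there is what absorbs the $C_\varepsilon\bigl(\norm{\nabla\Delta\rho}_{L^2}^2+\norm{\nabla\Delta\log\rho}_{L^2}^2\bigr)$ terms coming from $J_2$ and $J_4$ in \eqref{4.27}, \eqref{4.29} and from $\norm{F}_{L^2}^2$; with a purely algebraic recovery these large (non-$\varepsilon$) multiples have no dissipation to land in, and the quadratic term $|\nabla\rho|^2$ reappears as $\norm{\nabla\rho}_{L^4}^2$, which is superlinear in $\norm{\Delta\rho}_{L^2}$, so the closure is at best unclear. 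Finally, the term $K_3=-\tfrac12\int\mu(\rho)_t|\curle v|^2$ cannot simply be reprised from \eqref{2.28}: it must be integrated by parts as in \eqref{4.26} so that the time derivative lands on $\log\rho$ and the Gr\"onwall coefficient stays of the integrable form $\norm{v}_{L^r}^s+\norm{\nabla\rho}_{L^r}^s+1$. These points, together with the Stokes lemma above, are where the actual proof lives, and your proposal leaves them unresolved.
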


\begin{proof}
Applying $-\nabla\Delta\log\rho\cdot\nabla$ on both sides of \eqref{log} and integrating over $\Omega$, we have
\begin{equation}\label{44.12}
\begin{aligned}
\left(\int \frac{1}{2}|\Delta\log\rho|^2\right)_t+\int c_0\rho^{-1}|\nabla\Delta\log\rho|^2&=\int \nabla\Delta\log\rho\cdot \nabla v\cdot \nabla\log\rho\\
&\quad+\int c_0\rho^{-1}\Delta\log\rho\nabla\log\rho\cdot\nabla\Delta\log\rho\\
&\quad+\int v\cdot \nabla^2\log\rho\cdot\nabla\Delta\log\rho\\
&:=\sum_{i=1}^3 O_i,
\end{aligned}
\end{equation}
where, for terms $O_1$ and $O_2$, we use Lemma \ref{Lemma221} to get
\begin{equation}\label{4.13}
\begin{cases}
|O_1|&\!\!\!\!\leq \norm{\nabla\log\rho}_{L^r}\norm{\nabla v}_{L^{\frac{2r}{r-2}}}\norm{\nabla\Delta\log\rho}_{L^2}\\
&\!\!\!\!\leq C_{\varepsilon_1}\left(\norm{\nabla\rho}_{L^r}^s+1\right)\norm{\nabla v}_{L^2}^2+\varepsilon_1\left(\norm{\nabla\Delta\log\rho}_{L^2}^2+\normf{v}_{H^2}^2\right),\\
|O_2|&\!\!\!\!\leq C\norm{\nabla\log\rho}_{L^r}\norm{\Delta\log\rho}_{L^{\frac{2r}{r-2}}}\norm{\nabla\Delta\log\rho}_{L^2}\\
&\!\!\!\!\leq C_{\varepsilon_2}\left(\norm{\nabla\rho}_{L^r}^s+1\right)\norm{\Delta\log\rho}_{L^2}^2+\varepsilon_2\norm{\nabla\Delta\rho}_{L^2}^2.
\end{cases}
\end{equation}
For $O_3$, we integrate by parts to get
\begin{equation}
\begin{aligned}\label{4.14}
O_3&=\int v_i\partial_{ij}\log\rho\partial_j\Delta\log\rho\\
&=\int_\partial (v_i\partial_{ij}\log\rho n_j)\Delta\log\rho-\int (\partial_jv_i\partial_{ij}\log\rho)\Delta\log\rho\\
&=\int_\partial (v_i\partial_{ij}\log\rho n_j)\Delta\log\rho-\int_\partial (\partial_jv_i\partial_{j}\log\rho n_i)\Delta\log\rho+\int \partial_jv_i\partial_{j}\log\rho\partial_i\Delta\log\rho\\
&:=B_1+B_2+B_3.
\end{aligned}
\end{equation}
Since the simplest part $B_3$ can be handled similarly like $O_1$, we only need estimate $B_1$ and $B_2$. First, using the boundary condition $v\cdot n=n\cdot \nabla\log\rho=0$ and Lemma \ref{Lemma221}, we have
\begin{equation}
\begin{aligned}
B_1&=\int_\partial v\cdot \nabla^2\log\rho\cdot n\Delta\log\rho\\
&=-\int_\partial v\cdot\nabla n \cdot\nabla\log\rho \Delta\log\rho\\
&=\int_\partial (n\times v^\perp)\cdot\nabla n \cdot\nabla\log\rho \Delta\log\rho\\
&=\int (\curle v^\perp\cdot\nabla n \cdot\nabla\log\rho)\Delta\log\rho-\int v^\perp\cdot\left[\nabla\Delta\log\rho\times(\nabla n \cdot\nabla\log\rho)\right] \\
&\quad-\int v^\perp\cdot\curle(\nabla n \cdot\nabla\log\rho) \Delta\log\rho\\
&\leq C\norm{\nabla\log\rho}_{L^r}\norm{\nabla v}_{L^{\frac{2r}{r-2}}}\norm{\Delta\log\rho}_{L^2}+C\norm{\nabla\log\rho}_{L^r}\norm{v}_{L^{\frac{2r}{r-2}}}\norm{\nabla\Delta\log\rho}_{L^2}\\
&\quad+C\norm{v}_{L^3}\norm{\Delta\log\rho}_{L^6}\norm{\Delta\log\rho}_{L^2}\\
&\leq C_{\varepsilon_3}(\norm{\nabla\rho}_{L^r}^s+1)\norm{\nabla v}_{L^2}^2+C_{\varepsilon_3}\norm{v}^4_{L^3}\norm{\Delta\log\rho}^2_{L^2}+\varepsilon_3\norm{\nabla\Delta\log\rho}^2_{L^2}.
\end{aligned}
\end{equation}
Hence,
\begin{equation}\label{4.16}
\begin{aligned}
|B_1|\leq C_{\varepsilon_3}(\norm{\nabla\rho}_{L^r}^s+1)\norm{\nabla v}_{L^2}^2+C_{\varepsilon_3}\norm{v}^4_{L^3}\norm{\Delta\log\rho}^2_{L^2}+\varepsilon_3\norm{\nabla\Delta\log\rho}^2_{L^2}.
\end{aligned}
\end{equation}

Similarly, for $B_2$, one has
\begin{equation}
\begin{aligned}
B_2&=-\int_\partial \nabla\log\rho\cdot \nabla v\cdot n\Delta\log\rho\\
&=\int_\partial \nabla\log\rho\cdot \nabla n\cdot v\Delta\log\rho\\
&=\int_\partial \nabla\log\rho\cdot \nabla n\cdot (v^\perp \times n)\Delta\log\rho\\
&=-\int (\nabla\log\rho\cdot \nabla n\cdot \curle v^\perp) \Delta\log\rho+\int \nabla\Delta\log\rho\times(\nabla\log\rho\cdot \nabla n)\cdot v^\perp\\
&\quad+\int \Delta\log\rho\curle(\nabla\log\rho\cdot \nabla n)\cdot v^\perp\\
&\leq C_{\varepsilon_4}(\norm{\nabla\rho}_{L^r}^s+1)\norm{\nabla v}_{L^2}^2+C_{\varepsilon_4}\norm{v}^4_{L^3}\norm{\Delta\log\rho}^2_{L^2}+\varepsilon_4\norm{\nabla\Delta\log\rho}^2_{L^2},
\end{aligned}
\end{equation}
that is,
\begin{equation}\label{4.18}
|B_2|\leq C_{\varepsilon_4}(\norm{\nabla\rho}_{L^r}^s+1)\norm{\nabla v}_{L^2}^2+C_{\varepsilon_4}\norm{v}^4_{L^3}\norm{\Delta\log\rho}^2_{L^2}+\varepsilon_4\norm{\nabla\Delta\log\rho}^2_{L^2}.
\end{equation}
Combining \eqref{4.13}--\eqref{4.14}, \eqref{4.16} and \eqref{4.18}, one can deduce that
\begin{equation}\label{4.19}
\begin{aligned}
&\left(\norm{\Delta\log\rho}_{L^2}^2\right)_t+\nu\norm{\nabla\Delta\log\rho}_{L^2}^2\\
&\leq C\left(\norm{\nabla\rho}_{L^r}^s+\norm{v}^4_{L^3}+1\right)\norm{\Delta\log\rho}_{L^2}^2+C_\varepsilon\left(\norm{\nabla\rho}_{L^r}^s+1\right)\norm{\nabla v}_{L^2}^2+\varepsilon\norm{v}_{H^2}^2.
\end{aligned}
\end{equation}

On the other hand, we slightly change \eqref{2.20} (more precisely, $I_3$) into the form
\begin{equation}
\begin{aligned}
\left(\int \frac{1}{2}|\Delta\rho|^2\right)_t+\int c_0\rho^{-1}|\nabla\Delta\rho|^2&=\int \nabla\Delta\rho\cdot \nabla v\cdot \nabla\rho+\int v\cdot \nabla^2\rho\cdot\nabla\Delta\rho\\
&\quad+c_0\int \nabla|\nabla\log\rho|^2\cdot\nabla\Delta\rho+\int c_0\rho^{-2}\Delta\rho\nabla\rho\cdot\nabla\Delta\rho\\
&:=\sum_{i=1}^4 I_i.
\end{aligned}
\end{equation}
Then, exactly following the proof of \eqref{4.13}--\eqref{4.18}, we can obtain the festimate which is similar with \eqref{4.19}, that is,
\begin{equation}\label{44.21}
\begin{aligned}
\left(\norm{\Delta\rho}_{L^2}^2\right)_t+\nu\norm{\nabla\Delta\rho}_{L^2}^2&\leq C_\varepsilon\left(\norm{\nabla\rho}_{L^r}^s+\norm{v}_{L^3}^4+1\right)\left(\norm{\Delta\rho}_{L^2}^2+\norm{\Delta\log\rho}_{L^2}^2\right)\\
&\quad+C_\varepsilon\left(\norm{\nabla\rho}_{L^r}^s+1\right)\norm{\nabla v}_{L^2}^2+\varepsilon\left(\norm{v}_{H^2}^2+\norm{\nabla\Delta\log\rho}_{L^2}^2\right),
\end{aligned}
\end{equation}
together with \eqref{4.19} yields
\begin{equation}\label{4.22}
\begin{aligned}
&\left(\norm{\Delta\rho}_{L^2}^2+\norm{\Delta\log\rho}_{L^2}^2\right)_t+\nu\left(\norm{\nabla\Delta\rho}_{L^2}^2+\norm{\nabla\Delta\log\rho}_{L^2}^2\right)\\
&\leq C\left(\norm{\nabla\rho}_{L^r}^s+\norm{v}_{L^3}^4+1\right)\left(\norm{\Delta\rho}_{L^2}^2+\norm{\Delta\log\rho}_{L^2}^2+\norm{\nabla v}_{L^2}^2\right)+\varepsilon\norm{v}_{H^2}^2,
\end{aligned}
\end{equation}

For the estimate of $(\log\rho)_t$, applying $(\log\rho)_t\partial_t$ on both sides of \eqref{log} and integrating over $\Omega$, one has
\begin{equation}\label{4.20}
\begin{aligned}
&\left(\frac{1}{2}\int|(\log\rho)_t|^2\right)_t+\int c_0\rho^{-1}|\nabla(\log\rho)_t|^2\\
&=-\int c_0\rho^{-1}\nabla(\log\rho)_t\cdot\nabla\log\rho(\log\rho)_t-\int v_t\cdot\nabla\log\rho(\log\rho)_t+\int c_0\rho^{-1}|(\log\rho)_t|^2|\nabla\log\rho|^2\\
&:=\sum_{i=1}^3P_i,
\end{aligned}
\end{equation}
where, using Lemma \ref{Lemma221},
\begin{equation}\label{4.21}
\begin{cases}
|P_1|&\!\!\!\!\leq C\norm{\nabla\log\rho}_{L^r}\norm{(\log\rho)_t}_{L^{\frac{2r}{r-2}}}\norm{\nabla(\log\rho)_t}_{L^2}\\
&\!\!\!\!\leq C_{\varepsilon_1}(\norm{\nabla\rho}_{L^r}^s+1)\norm{(\log\rho)_t}_{L^2}^2+\varepsilon_1\norm{\nabla(\log\rho)_t}_{L^2}^2\\
|P_2|&\!\!\!\!\leq \norm{\nabla\log\rho}_{L^r}\norm{(\log\rho)_t}_{L^{\frac{2r}{r-2}}}\norm{v_t}_{L^2}\\
&\!\!\!\!\leq C_{\varepsilon_2}(\norm{\nabla\rho}_{L^r}^s+1)\norm{(\log\rho)_t}_{L^2}^2+\varepsilon_2\norm{v_t}_{L^2}^2\\
|P_3|&\!\!\!\!\leq \norm{\nabla\log\rho}^2_{L^r}\norm{(\log\rho)_t}^2_{L^{\frac{2r}{r-2}}}\\
&\!\!\!\!\leq C_{\varepsilon_3}(\norm{\nabla\rho}_{L^r}^s+1)\norm{(\log\rho)_t}_{L^2}^2+\varepsilon_3\norm{\nabla(\log\rho)_t}_{L^2}^2.
\end{cases}
\end{equation}
Combining \eqref{4.20} and \eqref{4.21} leads to
\begin{equation}\label{4.25}
\left(\norm{(\log\rho)_t}_{L^2}^2\right)_t+\nu\norm{\nabla(\log\rho)_t}_{L^2}^2\leq C_\varepsilon(\norm{\nabla\rho}_{L^r}^s+1)\norm{(\log\rho)_t}_{L^2}^2+\varepsilon\norm{v_t}_{L^2}^2
\end{equation}

We still need to treat the higher order bounds for $v$. The proof is basically the same as we did in \eqref{2.25}--\eqref{2.28} and the main differences one should notice are terms $K_3$ and $J_2$--$J_4$. For $K_3$, 
\begin{equation}\label{4.26}
\begin{aligned}
K_3&=-\int \frac{1}{2}\mu(\rho)_t |\curle v|^2\\
&=-\frac{1}{2}\int_{\partial}(n\times v)\cdot \curle v\mu(\rho)_t- \frac{1}{2}\int \nabla\mu(\rho)_t\times \curle v\cdot v-\frac{1}{2}\int \mu(\rho)_t\Delta v\cdot v\\
&=-\frac{1}{2}\int_\partial \rho\mu'(\rho)(\log\rho)_tv\cdot B\cdot v- \frac{1}{2}\int \rho\mu'(\rho)\nabla(\log\rho)_t\times \curle v\cdot v\\
&\quad- \frac{1}{2}\int \left[\rho\mu'(\rho)+\rho^2\mu''(\rho)\right](\log\rho)_t\nabla\log\rho\times \curle v\cdot v-\frac{1}{2}\int \mu(\rho)_t\Delta v\cdot v\\
&\leq |K_2|+C\left(\norm{\nabla(\log\rho)_t}_{L^2}+\norm{\nabla\rho}_{L^r}\norm{(\log\rho)_t}_{L^{\frac{2r}{r-2}}}\right)\norm{v}_{L^r}\norm{\nabla v}_{L^{\frac{2r}{r-2}}}\\
&\quad+C\norm{v}_{L^r}\norm{(\log\rho)_t}_{L^{\frac{2r}{r-2}}}\norm{\Delta v}_{L^2}\\
&\leq |K_2|+C_\varepsilon\left(\norm{v}_{L^r}^s+\norm{\nabla\rho}_{L^r}^s+1\right)\left(\norm{\nabla v}_{L^2}^2+\norm{(\log\rho)_t}_{L^2}^2\right)\\
&\quad+\varepsilon\left(\norm{\nabla(\log\rho)_t}_{L^2}^2+\norm{v}_{H^2}^2\right),
\end{aligned}
\end{equation}
while, for $J_2$--$J_4$, using the relation
$$\nabla^2\rho^{-1}=\frac{1}{\rho^2}\nabla^2\rho-\frac{2}{\rho}\nabla^2\log\rho$$
and Lemma \ref{Lemma221}, we have
\begin{equation}\label{4.27}
\begin{aligned}
J_2&=\int c_0\dive{\left[2\mu(\rho)\nabla^2\rho^{-1}\right]}\cdot v_t\\
&\leq C\norm{\nabla\rho}_{L^r}\normf{\nabla^2\rho^{-1}}_{L^{\frac{2r}{r-2}}}\norm{v_t}_{L^2}+C\norm{\nabla\Delta\rho^{-1}}_{L^2}\norm{v_t}_{L^2}\\
&\leq C_{\varepsilon_1}\left(\norm{\nabla\rho}_{L^r}^s+1\right)\left(\normf{\Delta\rho}_{L^2}^2+\normf{\Delta\log\rho}_{L^2}^2\right)\\
&\quad+C_{\varepsilon_1}\left(\normf{\nabla\Delta\rho}_{L^2}^2+\normf{\nabla\Delta\log\rho}_{L^2}^2\right)+\varepsilon_1\norm{v_t}_{L^2}^2
\end{aligned}
\end{equation}
\begin{equation}\label{4.28}
\begin{aligned}
J_3&=-\int c_0 \dive{\left(\rho v\otimes\nabla\rho^{-1}\right)}\cdot v_t=\int c_0 \dive{\left( v\otimes\nabla\log\rho\right)}\cdot v_t\\
&\leq C\norm{\nabla\rho}_{L^r}\norm{\nabla v}_{L^{\frac{2r}{r-2}}}\norm{v_t}_{L^2}+C\norm{v}_{L^r}\normf{\nabla^2\log\rho}_{L^{\frac{2r}{r-2}}}\norm{v_t}_{L^2}\\
&\leq C_{\varepsilon_2}\left(\norm{\nabla\rho}_{L^r}^s+\norm{v}_{L^r}^s+1\right)\left(\norm{\Delta\log\rho}_{L^2}^2+\norm{\nabla v}_{L^2}^2\right)\\
&\quad+\varepsilon_2\left(\norm{\nabla\Delta\log\rho}_{L^2}^2+\norm{v}_{H^2}^2+\norm{v_t}_{L^2}^2\right)
\end{aligned}
\end{equation}
\begin{equation}\label{4.29}
\begin{aligned}
J_4&=-\int c_0^2 \dive{\left(\rho \nabla\rho^{-1}\otimes\nabla\rho^{-1}\right)}\cdot v_t=\int c_0^2 \dive{\left(\nabla\log\rho\otimes\nabla\rho^{-1}\right)}\cdot v_t\\
&\leq C\norm{\nabla\rho}_{L^r}\left(\normf{\nabla^2\log\rho}_{L^{\frac{2r}{r-2}}}+\normf{\nabla^2\rho^{-1}}_{L^{\frac{2r}{r-2}}}\right)\norm{v_t}_{L^2}\\
&\leq C_{\varepsilon_3}\left(\norm{\nabla\rho}_{L^r}^s+1\right)\left(\norm{\Delta\log\rho}_{L^2}^2+\norm{\Delta\rho}_{L^2}^2\right)\\
&\quad+\varepsilon_3\left(\norm{\nabla\Delta\log\rho}_{L^2}^2+\norm{\nabla\Delta\rho}_{L^2}^2+\norm{v_t}_{L^2}^2\right)\\
\end{aligned}
\end{equation}
Therefore, modifying the corresponding norms of $(v,\nabla\rho)$ from \eqref{2.25}--\eqref{2.28} into the $L^r$-norms, alonging with \eqref{4.26}--\eqref{4.29}, we have
\begin{equation}\label{4.30}
\begin{aligned}
&\left(\int_\partial \mu(\rho)v\cdot B\cdot v+\int \mu(\rho)|\curle v|^2\right)_t +\nu\norm{v_t}_{L^2}^2+M'(t)\\
&\leq C_\varepsilon\left(\norm{\nabla\rho}_{L^3}^4+\norm{\nabla\rho}_{L^r}^s+\norm{v}_{L^3}^4+\norm{v}_{L^r}^s+1\right)\left(\norm{\nabla v}_{L^2}^2+\norm{\Delta\log\rho}_{L^2}^2+\norm{\Delta\rho}_{L^2}^2\right)\\
&\quad+C_{\varepsilon}\left(\norm{\nabla\Delta\log\rho}_{L^2}^2+\norm{\nabla\Delta\rho}^2_{L^2}\right)+\varepsilon\left(\norm{v}_{H^2}^2+\norm{\nabla(\log\rho)_t}_{L^2}^2\right).
\end{aligned}
\end{equation}
For the sake of simplicity, as we have explained in \eqref{327} and \eqref{341}, we can rewrite \eqref{4.30} into
\begin{equation}\label{4.30}
\begin{aligned}
\left(\norm{\nabla v}_{L^2}^2\right)_t +\nu\norm{v_t}_{L^2}^2&\leq C_\varepsilon\left[\cI(t)+1\right]\left(\norm{\nabla v}_{L^2}^2+\norm{\Delta\log\rho}_{L^2}^2+\norm{\Delta\rho}_{L^2}^2\right)\\
&\quad+C_{\varepsilon}\left(\norm{\nabla\Delta\log\rho}_{L^2}^2+\norm{\nabla\Delta\rho}^2_{L^2}\right)+\varepsilon\left(\norm{v}_{H^2}^2+\norm{\nabla(\log\rho)_t}_{L^2}^2\right),
\end{aligned}
\end{equation}
where $\cI(t)$ is an integrable function over $(0,T^*)$. 

For $H^2$-norm of $v$, analoging with \eqref{328}--\eqref{331} and applying Lemma \ref{Lemma221}, one has
\begin{equation}\label{44.32}
\begin{aligned}
\norm{F}^2_{L^2}&\leq C_{\varepsilon}\left(\norm{\nabla\rho}_{L^r}^s+\norm{v}_{L^r}^s+1\right)\left(\norm{\Delta\log\rho}_{L^2}^2+\norm{\Delta\rho}_{L^2}^2+\norm{\nabla v}_{L^2}^2\right)\\
&\quad+C\left(\norm{v_t}^2_{L^2}+\norm{\nabla(\log\rho)_t}^2_{L^2}\right)+\varepsilon\left(\norm{v}_{H^2}^2+\norm{\nabla\Delta\log\rho}_{L^2}^2+\norm{\nabla\Delta\rho}^2_{L^2}\right)\\
\norm{\Phi}^2_{H^1}&\leq C\left(\norm{\nabla v}_{L^2}^2+\norm{\Delta\rho}_{L^2}^2+\norm{\Delta\log\rho}_{L^2}^2\right).
\end{aligned}
\end{equation}
where $\Phi:=-B\cdot[v+c_0\nabla\rho^{-1}]$. Thus, from Lemma \ref{lemma2.8}, \eqref{2.15}, we have
\begin{equation}\label{4.33}
\begin{aligned}
\norm{v}_{H^2}^2+\norm{p}_{H^1}^2&\leq C_{\varepsilon}\left(\norm{\nabla\rho}_{L^r}^s+\norm{v}_{L^r}^s+1\right)\left(\norm{\Delta\log\rho}_{L^2}^2+\norm{\Delta\rho}_{L^2}^2+\norm{\nabla v}_{L^2}^2\right)\\
&\quad+C\left(\norm{v_t}^2_{L^2}+\norm{\nabla(\log\rho)_t}^2_{L^2}\right)+\varepsilon\left(\norm{\nabla\Delta\log\rho}_{L^2}^2+\norm{\nabla\Delta\rho}^2_{L^2}\right),
\end{aligned}
\end{equation}
alonging with \eqref{4.30} gives
\begin{equation}\label{4.34}
\begin{aligned}
\left(\norm{\nabla v}_{L^2}^2\right)_t +\frac{\varepsilon}{2C}\norm{v}_{H^2}^2+\frac{\nu}{2}\norm{v_t}_{L^2}^2&\leq C_\varepsilon\left[\cI(t)+1\right]\left(\norm{\nabla v}_{L^2}^2+\norm{\Delta\log\rho}_{L^2}^2+\norm{\Delta\rho}_{L^2}^2\right)\\
&\quad+C_{\varepsilon}\left(\norm{\nabla\Delta\log\rho}_{L^2}^2+\norm{\nabla\Delta\rho}^2_{L^2}\right)\\
&\quad+\varepsilon\norm{\nabla(\log\rho)_t}_{L^2}^2.
\end{aligned}
\end{equation}
Thus, combining \eqref{4.22}, \eqref{4.25}, \eqref{4.33} and \eqref{4.34} by using the similar approach from \eqref{332}--\eqref{333}, then applying the Gr$\mathrm{\ddot{o}}$nwall's inequality, we deduce the estimate \eqref{4.11}.
\end{proof}

\begin{Remark}
From the proof above, one should notice that, it is the convection term $\rho u\cdot \nabla u$ that restricts us to use the Serrin's condition of $v$. In fact, we can directly use the the bound $u\in L^s(0,T;L^r)$ in \eqref{44.6} to get the lower bounds for $\log\rho$ (see also Lemma \ref{lemma46}), but, in order to show this point, we insist to only use $\nabla\rho\in L^s(0,T;L^r)$.
\end{Remark}

Now, we turn back to prove Proposition \ref{prop4.1} for $(\rho,u)$ satisfying \eqref{equation1.6}.
\begin{proof}[Proof of Proposition \ref{prop4.1}]
Combining Lemma \ref{lemma44}--\ref{lemma45}, we can get Proposition \ref{prop4.1}. The only point one should notice is that 
\begin{equation*}
\begin{aligned}
\norm{\nabla\rho_t}_{L^2}&\leq C\left(\norm{\rho_t\nabla\rho}_{L^2}+\norm{\nabla(\log\rho)_t}_{L^2}\right)\\
&\leq C\left(\norm{\Delta\rho}^2_{L^2}\norm{\rho_t}_{L^2}+\norm{\nabla(\log\rho)_t}_{L^2}\right)+\frac{1}{2}\norm{\nabla\rho_t}_{L^2},
\end{aligned}
\end{equation*}
that is,
\begin{equation*}
\int_0^T\norm{\nabla\rho_t}^2_{L^2}\,dt\leq C\left(\sup_{t\in[0,T]}\norm{\Delta\rho}^2_{L^2}\sup_{t\in[0,T]}\norm{\rho_t}^2_{L^2}\int_0^T\norm{\Delta\rho}^2_{L^2}\,dt+\int_0^T\norm{\nabla(\log\rho)_t}^2_{L^2}\,dt\right)\leq \tilde C.
\end{equation*}
\end{proof}

\subsection{Case for $(\rho,u)$ satisfying \eqref{equation1.7}}

We basically follow the proof in subsection \ref{11}. Since the nonlinear term $|\nabla\rho|^2$, one still has to estimate for $\rho$ together with $\log\rho$. In case of use, we colloect some bounds from \eqref{1.23}
\begin{equation}\label{Q1}
\norm{\nabla Q}_{L^2}\leq C\left(\norm{\Delta\rho}_{L^2}+\norm{\nabla\rho}_{L^r}\norm{\nabla\rho}_{L^{\frac{2r}{r-2}}}\right)\leq C\left[\norm{\Delta\rho}_{L^2}+\left(\norm{\nabla\rho}_{L^r}^s+1\right)\norm{\nabla\rho}_{L^2}^2\right],
\end{equation}
\begin{equation}\label{Q2}
\begin{aligned}
\norm{Q_t}_{L^2}&\leq C\left(\norm{\nabla(\log\rho)_t}_{L^2}+\norm{\nabla\rho}_{L^r}\norm{\rho_t}_{L^{\frac{2r}{r-2}}}\right)\\
&\leq C\left[\norm{\nabla(\log\rho)_t}_{L^2}+\left(\norm{\nabla\rho}_{L^r}^s+1\right)\norm{(\log\rho)_t}_{L^2}^2\right].
\end{aligned}
\end{equation}

First, we give a lemma for the lower order bounds of $\rho$.

\begin{Lemma}\label{lemma46}
Suppose that $(\rho,u)$ satisfies the condition \eqref{equation1.7}. Then, for all $T\in (0,T^*)$, Lemma \ref{lemma4.2} holds and 
\begin{equation}
\sup_{t\in[0,T]}\left(\norm{\nabla\rho}_{L^2}^2+\norm{\nabla\log\rho}_{L^2}^2\right)+\int_0^T\left(\norm{\Delta\rho}_{L^2}^2+\norm{\Delta\log\rho}_{L^2}^2\right)\,dt\leq \tilde C.
\end{equation}
\end{Lemma}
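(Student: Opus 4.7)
The plan is to mirror the proof of Lemma~\ref{lemma44}, but to run two parallel energy estimates (for $\nabla\log\rho$ and for $\nabla\rho$) and to use the splitting $v = u + c_0\rho^{-2}\nabla\rho$ in order to apply the Serrin-type control \eqref{4.1} directly on $u$ and $\nabla\rho$; this is what sidesteps the nontrivial boundary trace of $v$ that obstructs the direct $v$-energy approach in case \eqref{equation1.7}. First, I would observe that Lemma~\ref{lemma4.2} follows solely from testing $\eqref{equation1.1}_1$ against $\rho-(\rho_0)_\Omega$ and integrating by parts using $n\cdot\nabla\rho=0$; since this Neumann condition is common to both \eqref{equation1.6} and \eqref{equation1.7}, that part of the statement transfers verbatim.

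For the $H^1$ estimate of $\log\rho$, I would take the reformulation $(\log\rho)_t + v\cdot\nabla\log\rho - c_0\rho^{-1}\Delta\log\rho=0$ used in Lemma~\ref{lemma44} and test against $-\Delta\log\rho$. The only boundary ingredient needed is $n\cdot\nabla\log\rho=0$, which holds, so the integration by parts is clean and produces no surface term. The transport contribution $\int(v\cdot\nabla\log\rho)\Delta\log\rho$ would then be split, via $v=u+c_0\rho^{-2}\nabla\rho$, into a $u$-piece and a $\nabla\rho$-piece; both are bounded by H\"older combined with Gagliardo--Nirenberg (Lemma~\ref{Lemma221}) and Young's inequality, yielding an estimate of the shape $C_\varepsilon\bigl(\norm{u}_{L^r}^s+\norm{\nabla\rho}_{L^r}^s+1\bigr)\norm{\nabla\log\rho}_{L^2}^2+\varepsilon\norm{\Delta\log\rho}_{L^2}^2$. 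After absorbing the $\varepsilon$-term and applying Gr\"onwall's inequality, the time integrability of $\norm{u}_{L^r}^s+\norm{\nabla\rho}_{L^r}^s$ provided by \eqref{4.1} produces the required control on $\norm{\nabla\log\rho}_{L^2}^2$ and $\int_0^T\norm{\Delta\log\rho}_{L^2}^2\,dt$.

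For the $H^1$ estimate of $\rho$, I would follow the same pattern, starting from $\eqref{equation1.18}_1$ in the form $\rho_t+v\cdot\nabla\rho+c_0\rho^{-2}\abs{\nabla\rho}^2-c_0\rho^{-1}\Delta\rho=0$, testing against $-\Delta\rho$, and again using $n\cdot\nabla\rho=0$ to remove the boundary contributions. The two nonlinear terms $\int(v\cdot\nabla\rho)\Delta\rho$ and $c_0\int\rho^{-2}\abs{\nabla\rho}^2\Delta\rho$ are handled by the same $v$-splitting and Gagliardo--Nirenberg/Young argument, producing an analogous differential inequality for $\norm{\nabla\rho}_{L^2}^2$. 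Adding the two inequalities and invoking Gr\"onwall closes the bounds on $\nabla\rho$ and $\nabla\log\rho$ simultaneously; in fact, because $\alpha\leq\rho\leq\beta$, the two quantities are equivalent up to constants, so the estimates could also be decoupled.

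The main technical point I expect to watch carefully is the bookkeeping of Young exponents: after applying Gagliardo--Nirenberg to the Lebesgue norms $\norm{\nabla\log\rho}_{L^{2r/(r-2)}}$ (or $\norm{\nabla\rho}_{L^{2r/(r-2)}}$), one obtains factors $\norm{u}_{L^r}$ and $\norm{\nabla\rho}_{L^r}$ raised to powers that one must be able to bound by the $s$-th power (modulo constants) in order to use \eqref{4.1}. This is exactly the Serrin constraint $2/s+3/r\leq 1$, and once verified, the integrable weights needed for Gr\"onwall fall out automatically; no finer structure of the momentum equation or of the boundary condition for $u$ is required, which is precisely why the $\nabla\rho$/$\nabla\log\rho$ bounds can be obtained prior to (and independently of) the higher-order estimates involving $u$ and $\pi$.
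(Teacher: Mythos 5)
Your proposal is correct and takes essentially the same route as the paper: both test the parabolic equations for $\log\rho$ and $\rho$ against $-\Delta\log\rho$ and $-\Delta\rho$, bound the convection/nonlinear terms by H\"older, Gagliardo--Nirenberg and Young so that the resulting weights are time-integrable under \eqref{4.1} (the paper keeps $\norm{v}_{L^r}$ and converts it to $\norm{u}_{L^r}$ using the equivalent form of \eqref{4.1}, whereas you split $v=u+c_0\rho^{-2}\nabla\rho$ first, which amounts to the same thing), and conclude by Gr\"onwall. Your observation that Lemma \ref{lemma4.2} transfers because only the Neumann condition $n\cdot\nabla\rho=0$ (and $v\cdot n=0$) is used is likewise the paper's implicit reasoning.
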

\begin{proof}
The estimates for $\rho$ and $\log\rho$ come from \eqref{eq3.7} and \eqref{44.6}, respectively, and we only give the proof for $\log\rho$ here, since another one can be proved similarly. From \eqref{44.6}, we have
\begin{equation}\label{44.40}
\begin{aligned}
\left(\frac{1}{2}\int |\nabla\log\rho|^2\right)_t+\int c_0\rho^{-1}|\Delta\log\rho|^2&=\int (v\cdot \nabla\log\rho)\Delta\log\rho\\
&\leq\norm{v}_{L^{r}} \norm{\nabla\log\rho}_{L^\frac{2r}{r-2}}\norm{\Delta\log\rho}_{L^2}\\
&\leq C_\varepsilon\left(\norm{v}_{L^r}^s+1\right)\norm{\nabla\log\rho}^2_{L^2}+\varepsilon\norm{\Delta\log\rho}_{L^2}^2\\
&\leq C_\varepsilon\left(\norm{u}_{L^r}^s+1\right)\norm{\nabla\log\rho}^2_{L^2}+\varepsilon\norm{\Delta\log\rho}_{L^2}^2,
\end{aligned}
\end{equation}
then, applying \eqref{4.1} and Gr$\mathrm{\ddot{o}}$nwall's inequality for \eqref{44.40}, we conclude the proof.
\end{proof}

\begin{Lemma}
Suppose that $(\rho,u)$ satisfies the condition \eqref{equation1.7}. Then, 
\begin{equation}\label{con}
\sup_{t\in [0,T]}\tilde\cF(t)+\int_0^T\left(\tilde\cG(t)+\norm{\pi}_{H^1}\right)\,dt\leq \tilde C,
\end{equation}
where
\begin{equation*}
\begin{gathered}
\tilde\cF(t):=\normf{u}_{H^1}^2+\norm{\Delta\rho}_{L^2}^2+\norm{\Delta\log\rho}_{L^2}^2+\norm{(\log\rho)_t}_{L^2}^2,\\
\tilde\cG(t):= \norm{\nabla\Delta\rho}_{L^2}^2+ \norm{\nabla\Delta\log\rho}_{L^2}^2+\norm{u_t}_{L^2}^2+\norm{\Delta u}_{L^2}^2+\norm{\nabla(\log\rho)_t}_{L^2}^2
\end{gathered}
\end{equation*}
\end{Lemma}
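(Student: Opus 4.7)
The plan is to mimic the structure of Lemma \ref{lemma45}, but work directly with $(\rho,u)$ rather than passing to $v$ (since $v$ does not satisfy a clean boundary condition in the Dirichlet case), and to employ the decomposition $u=w+Q$ with $Q=\cB[c_0\Delta\rho^{-1}]$ introduced in subsection \ref{subsection3.2}. The advantage is that $w$ vanishes on $\partial\Omega$, so testing the momentum equation against $w$ or $w_t$ produces no boundary terms; the price is extra source terms generated by $Q$ and $Q_t$, but these are harmless thanks to the bounds \eqref{Q1}--\eqref{Q2}. Throughout, every $L^3$ or $L^6$ norm of $u$ or $\nabla\rho$ that appeared in Section \ref{section3} will be converted into the mixed $L^r$--$L^{\frac{2r}{r-2}}$ pair so that the Serrin assumption \eqref{4.1} supplies an integrable-in-time factor.

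For the density side I will test \eqref{log} with $-\nabla\Delta\log\rho\cdot\nabla$ and the mass equation $\eqref{equation1.18}_1$ with $-\nabla\Delta\rho\cdot\nabla$, reproducing the computation \eqref{44.12}--\eqref{44.21}. The important simplification over Lemma \ref{lemma45} is that the boundary terms $B_1, B_2$ in \eqref{4.14} vanish identically here because $u=0$ on $\partial\Omega$, so only the interior integrals $O_1,O_2,O_3$ and their $\rho$-analogues need controlling. Interpolating against $L^r$ and absorbing with Young's inequality, I expect to obtain
\begin{equation*}
\left(\norm{\Delta\rho}_{L^2}^2+\norm{\Delta\log\rho}_{L^2}^2\right)_t+\nu\left(\norm{\nabla\Delta\rho}_{L^2}^2+\norm{\nabla\Delta\log\rho}_{L^2}^2\right)\leq C[\cI(t)+1]\tilde\cF(t)+\varepsilon\norm{\Delta u}_{L^2}^2,
\end{equation*}
where $\cI(t)=\norm{u}_{L^r}^s+\norm{\nabla\rho}_{L^r}^s$ is integrable on $(0,T^*)$ by \eqref{4.1}. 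The estimate for $(\log\rho)_t$ is obtained verbatim from \eqref{4.20}--\eqref{4.25}.

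For the velocity side I will test $\eqref{equation1.1}_2$ against $w_t=u_t-Q_t$, following \eqref{346}--\eqref{351}, but again with every $L^3, L^6$ norm replaced by an $L^r, L^{\frac{2r}{r-2}}$ pair and with $\norm{Q_t}_{L^2}$, $\norm{\nabla Q}_{L^2}$ handled by \eqref{Q2}, \eqref{Q1}. To close on $\norm{\Delta u}_{L^2}^2$ I invoke Lemma \ref{lemma2.8}(2) applied to the Stokes system obtained by moving all nonlinear and $Q$-terms to the right, with Dirichlet datum $\Phi=-c_0\nabla\rho^{-1}$; since $\norm{\Phi}_{H^2}^2\leq C(\norm{\nabla\Delta\rho}_{L^2}^2+\text{lower order})$, the top-order density piece produced by $\Phi$ can be absorbed by the $\norm{\nabla\Delta\rho}_{L^2}^2$ already available on the left from the density estimate above, provided the coefficients are chosen in the right order (analogous to \eqref{333}--\eqref{3336}).

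Summing the density and velocity inequalities with sufficiently small $\varepsilon$, absorbing the $\norm{\Delta u}_{L^2}^2$ and $\norm{\nabla\Delta\rho}_{L^2}^2$ cross-terms, and applying Grönwall's inequality with integrable weight $\cI(t)+1$ yields \eqref{con}; the pressure estimate follows from the same application of Lemma \ref{lemma2.8}(2). The main obstacle I anticipate is the bookkeeping in the closure step: unlike the slip case, the non-homogeneous Dirichlet datum $-c_0\nabla\rho^{-1}$ forces $\Phi$ to live in $H^2$ rather than $H^1$, so the $\nabla\Delta\rho$ term appears on both sides of the combined inequality, and one must verify that after weighting by suitable constants the absorbed factor is genuinely strictly less than $\nu$, exactly as was done in the small-data setting of Lemma \ref{lemma3.5}.
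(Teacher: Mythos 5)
Your overall scheme (Serrin-weighted Gr\"onwall for $\rho$, $\log\rho$ and $(\log\rho)_t$; testing the momentum equation with $w$ and $w_t$ using $u=w+Q$ and \eqref{Q1}--\eqref{Q2}; closing $\norm{\Delta u}_{L^2}$ and $\norm{\pi}_{H^1}$ through the variable-coefficient Stokes estimate with datum $-c_0\nabla\rho^{-1}$) is the paper's scheme, but there is one genuine error in the density part. The transport field in \eqref{log} and in $\eqref{equation1.18}_1$ is $v=u-c_0\nabla\rho^{-1}$, and in the Dirichlet case $v|_{\partial\Omega}=-c_0\nabla\rho^{-1}$, which is \emph{not} zero: only its normal component vanishes (because $n\cdot\nabla\rho=0$). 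Hence the boundary integrals $B_1$, $B_2$ in \eqref{4.14} do not ``vanish identically because $u=0$ on $\partial\Omega$''. If, to force them to vanish, you rewrite the transport in terms of $u$, you pick up the extra nonlinearity $c_0\rho^{-1}\abs{\nabla\log\rho}^2$ (and its analogue in the $\rho$-equation); after applying $\nabla$ and pairing with $\nabla\Delta\log\rho$ this produces a term of the type $\int\abs{\nabla\log\rho}^3\abs{\nabla\Delta\log\rho}$, i.e.\ precisely the $\norm{\nabla\rho}_{L^6}^3$-type obstruction the introduction identifies: without the smallness of Section \ref{section3} it cannot be closed by the Serrin condition (the resulting Gr\"onwall weight would be $\norm{\Delta\log\rho}_{L^2}^4$, which is not a priori integrable). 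The correct handling, and what the paper does, is to observe that $v\cdot n=0$ still holds on $\partial\Omega$ in case \eqref{equation1.7}, so the tangential decomposition $v=v^{\perp}\times n$ converts $B_1$, $B_2$ into interior integrals exactly as in subsection \ref{11}; the boundary terms are estimated, not discarded.

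Two further points are glossed over by your blanket ``replace $L^3,L^6$ by $L^r,L^{\frac{2r}{r-2}}$'' recipe. First, the term $N_3=\int\mu(\rho)_t\abs{D(u)}^2$ in \eqref{350} does not submit to that pairing: estimating it directly would require either $\norm{\nabla u}_{L^r}$ or non-integrable Gr\"onwall weights such as $\norm{(\log\rho)_t}_{L^2}^4$; the paper has to integrate it by parts, producing $\nabla(\log\rho)_t$, $\nabla\rho$ and $\Delta u$ factors that can be interpolated and absorbed. Second, Lemma \ref{lemma2.8}(2) concerns the non-divergence-form system with a merely continuous coefficient and requires $\dive=0$; so at this step one must (i) record that $\rho$ is H\"older continuous via Lemma \ref{lemma2.7}, (ii) move the $\nabla\mu(\rho)\cdot D$ contribution to the right-hand side, and (iii) apply the lemma to the divergence-free field $v$ with Dirichlet datum $-c_0\nabla\rho^{-1}$ (as in \eqref{44.43}), despite your stated intention never to pass to $v$: applying it to $u$ violates $\dive u=0$, and applying it to $w=u-Q$ would require $H^2$-bounds on the Bogovski\v i field $Q$ that Lemma \ref{llemma2.2} does not provide. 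With these corrections your argument coincides with the paper's proof.
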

\begin{proof}
On the one hand, We follow the proof from \eqref{44.12} to \eqref{4.22} and replace all $\norm{v}_{L^3}^4$ by $\norm{v}_{L^r}^s$ via Lemma \ref{Lemma221} to obtain that
\begin{equation}\label{44.39}
\begin{aligned}
&\left(\norm{\Delta\rho}_{L^2}^2+\norm{\Delta\log\rho}_{L^2}^2\right)_t+\nu\left(\norm{\nabla\Delta\rho}_{L^2}^2+\norm{\nabla\Delta\log\rho}_{L^2}^2\right)\\
&\leq C\left(\norm{\nabla\rho}_{L^r}^s+\norm{v}_{L^r}^s+1\right)\left(\norm{\Delta\rho}_{L^2}^2+\norm{\Delta\log\rho}_{L^2}^2+\norm{\nabla v}_{L^2}^2\right)+\varepsilon\norm{v}_{H^2}^2\\
&\leq C\left(\norm{u}_{L^r}^s+1\right)\tilde\cF(t)+\varepsilon\norm{v}_{H^2}^2,
\end{aligned}
\end{equation}
On the other hand, we still have \eqref{4.25}, that is,
\begin{equation}\label{44.40}
\begin{aligned}
\left(\norm{(\log\rho)_t}_{L^2}^2\right)_t+\nu\norm{\nabla(\log\rho)_t}_{L^2}^2&\leq C_\varepsilon(\norm{\nabla\rho}_{L^r}^s+1)\norm{(\log\rho)_t}_{L^2}^2+\varepsilon\norm{v_t}_{L^2}^2\\
&\leq C_\varepsilon(\norm{u}_{L^r}^s+1)\tilde\cF(t)+\varepsilon\norm{u_t}_{L^2}^2.
\end{aligned}
\end{equation}
Here, we have used the fact that
$$\norm{\nabla v}_{L^2}^2\leq C\left(\norm{\nabla u}_{L^2}^2+\normf{\nabla^2\rho^{-1}}_{L^2}^2\right)\leq C\left(\norm{\nabla u}_{L^2}^2+\normf{\Delta\rho}_{L^2}^2+\normf{\Delta\log\rho}_{L^2}^2\right),$$
$$\norm{v_t}_{L^2}^2\leq C\left(\norm{u_t}_{L^2}^2+\normf{\nabla\rho^{-1}_t}_{L^2}^2\right)\leq C\left(\norm{u_t}_{L^2}^2+\normf{\nabla(\log\rho)_t}_{L^2}^2+\normf{\nabla\rho}_{L^r}^2\normf{(\log\rho)_t}_{L^{\frac{2r}{r-2}}}^2\right).$$

For $u$, similar with the proof in subsection \ref{11}, we apply the Serrin's condition \eqref{4.1} on \eqref{346}--\eqref{351} and use \eqref{Q1}--\eqref{Q2}, we can derive that 
\begin{equation}\label{44.41}
\begin{aligned}
&\left(\norm{\sqrt\rho u}_{L^2}^2\right)_t+\nu\norm{\nabla u}_{L^2}^2\\
&\leq C_\varepsilon\left(\norm{u}_{L^r}^s+1\right)\left(\norm{\sqrt\rho u}_{L^2}^2+\norm{\nabla\rho}_{L^2}^2\right)+C_\varepsilon\left(\norm{\nabla\rho}_{L^2}^2+\norm{\Delta\rho}_{L^2}^2\right)+\varepsilon\norm{u_t}_{L^2}^2\\
&\leq C_\varepsilon\left(\norm{u}_{L^r}^s+1\right)\tilde\cF(t)+C_\varepsilon\left(\norm{\nabla\rho}_{L^2}^2+\norm{\Delta\rho}_{L^2}^2\right)+\varepsilon\norm{u_t}_{L^2}^2,
\end{aligned}
\end{equation}
and
\begin{equation}\label{44.42}
\begin{aligned}
\left(\normf{\sqrt{\mu(\rho)}|D(u)|}_{L^2}^2\right)_t+\nu\norm{u_t}_{L^2}^2&\leq C_\varepsilon\left(\norm{u}_{L^r}^s+1\right)\tilde\cF(t)+C_\varepsilon\norm{\nabla(\log\rho)_t}_{L^2}^2+\varepsilon\norm{\Delta u}_{L^2}^2,
\end{aligned}
\end{equation}
where the only term we need concern is
\begin{equation*}
N_3=\int\mu(\rho)_t|D(u)|^2\quad\text{ in }\eqref{350}.
\end{equation*}
However, this term can be computed by integrating by parts,
\begin{equation*}
\begin{aligned}
N_3&=\int\mu(\rho)_t|D(u)|^2\\
&=-\int \nabla\mu(\rho)_t\cdot D(u)\cdot u-\int \frac{1}{2}\mu(\rho)_t\Delta u\cdot u\\
&=-\int \rho\mu'(\rho)\nabla(\log\rho)_t\cdot D(u)\cdot u-\int (\log\rho)_t \left(\rho\mu'(\rho)\right)'\nabla\rho\cdot D(u)\cdot u\\
&\quad-\int \frac{1}{2}\rho\mu'(\rho)(\log\rho)_t\Delta u\cdot u\\
&\leq C\norm{u}_{L^r}\norm{\nabla u}_{L^{\frac{2r}{r-2}}}\norm{\nabla(\log\rho)_t}_{L^2}+C\norm{\nabla\rho}_{L^r}\norm{(\log\rho)_t}_{L^{\frac{2r}{r-2}}}\norm{u}_{L^r}\norm{\nabla u}_{L^{\frac{2r}{r-2}}}\\
&\quad+C\norm{u}_{L^r}\norm{(\log\rho)_t}_{L^{\frac{2r}{r-2}}}\norm{\Delta u}_{L^2}\\
&\leq C_\varepsilon\left(\norm{u}_{L^r}^s+1\right)\left(\norm{\nabla u}_{L^2}^2+\norm{(\log\rho)_t}_{L^2}^2\right)+\varepsilon\left(\norm{\nabla(\log\rho)_t}_{L^2}^2+\norm{\Delta u}_{L^2}^2\right)\\
&\leq C_\varepsilon\left(\norm{u}_{L^r}^s+1\right)\tilde\cF(t)+\varepsilon\left(\norm{\nabla(\log\rho)_t}_{L^2}^2+\norm{v}_{H^2}^2\right),
\end{aligned}
\end{equation*}
where we have used 
$$
\begin{aligned}
\norm{\Delta u}^2_{L^2}&\leq C\left(\norm{v}_{H^2}^2+\norm{\nabla\Delta\rho^{-1}}_{L^2}^2\right)\\
&\leq C\left(\norm{v}_{H^2}^2+\norm{\nabla\Delta\rho}^2_{L^2}+\norm{\nabla\Delta\log\rho}^2_{L^2}\right)\\
&\quad+C\norm{\nabla\rho}^2_{L^r}\left(\norm{\Delta\rho}_{L^{\frac{2r}{r-2}}}^2+\norm{\Delta\log\rho}_{L^{\frac{2r}{r-2}}}^2\right)
\end{aligned}
$$

To estimate $\Delta u$, we apply Lemma \ref{Lemma221}, \ref{lemma2.7} and \ref{lemma2.8} on \eqref{328} and, then, use \eqref{44.32}--\eqref{4.33} with $\Phi=-c_0\nabla\rho^{-1}$ and
$$
\begin{aligned}
\norm{\Phi}_{H^2}&\leq C\norm{\nabla\Delta\rho^{-1}}_{L^2}\\
&\leq C\left(\norm{\nabla\Delta\rho}_{L^2}+\norm{\nabla\Delta\log\rho}_{L^2}\right)\\
&\quad+C\norm{\nabla\rho}_{L^r}\left(\norm{\Delta\rho}_{L^{\frac{2r}{r-2}}}+\norm{\Delta\log\rho}_{L^{\frac{2r}{r-2}}}\right)
\end{aligned}
$$
 to deduce that 
 \begin{equation}\label{44.43}
\begin{aligned}
\norm{v}_{H^2}^2+\norm{\pi}_{H^1}^2&\leq C\left(\norm{u}_{L^r}^s+1\right)\left(\norm{\Delta\log\rho}_{L^2}^2+\norm{\Delta\rho}_{L^2}^2+\norm{\nabla v}_{L^2}^2\right)\\
&\quad+C\left(\norm{v_t}^2_{L^2}+\norm{\nabla(\log\rho)_t}^2_{L^2}+\norm{\nabla\Delta\log\rho}_{L^2}^2+\norm{\nabla\Delta\rho}^2_{L^2}\right)\\
&\leq C\left(\norm{u}_{L^r}^s+1\right)\tilde\cF(t)\\
&\quad+C\left(\norm{u_t}^2_{L^2}+\norm{\nabla(\log\rho)_t}^2_{L^2}+\norm{\nabla\Delta\log\rho}_{L^2}^2+\norm{\nabla\Delta\rho}^2_{L^2}\right).
\end{aligned}
\end{equation}

Now, collecting the bounds \eqref{44.39}--\eqref{44.43} and following the proof from \eqref{3354} to \eqref{3361}, one has 
\begin{equation}\label{44.44}
\begin{aligned}
\tilde\cF'(t)+\nu\tilde\cG(t)\leq C(\norm{u}_{L^r}^s+1)\tilde\cF(t)+C\left(\norm{\nabla\rho}_{L^2}^2+\norm{\Delta\rho}_{L^2}^2\right).
\end{aligned}
\end{equation}
Applying the Gr$\mathrm{\ddot{o}}$nwall's inequality and Lemma \ref{lemma46} on \eqref{44.44} and, then, turning back to \eqref{44.43}, we obtain \eqref{con}.
\end{proof} 

The proof of Proposition \ref{prop4.1} is same as that at the end of subsection \ref{11}, we omit it and left it to readers.

\subsection{Proof of Theorem \ref{Theorem1.3}}

Since we have Proposition \ref{prop4.1} and the constant $\tilde C$ is independent with $T\in (0,T^*)$. Thus, we can let $t\to T^*$ and consider $(\rho,u)(x,T^*)$ as the initial data. Then, following the proof in subsection \ref{P12}, we can deduce the violation of the maximality of $T^*$. Therefore, we complete the proof for Theorem \ref{Theorem1.3}.

\bibliographystyle{abbrv}
\bibliography{reference2}

\end{document}